\theoremstyle{plain}
\newtheorem{theorem}{Theorem}[section]
\newtheorem{lemma}[theorem]{Lemma}
\theoremstyle{remark}
\newtheorem{remark*}{Remark}
\newcommand{\R}{\mathbb{R}}
\newcommand{\C}{\mathcal{C}}
\newcommand{\Sph}{\mathbb{S}}
\DeclareMathOperator{\tr}{trace}
\newcommand{\bmat}{\left(\begin{smallmatrix}}
	\newcommand{\emat}{\end{smallmatrix}\right)}
\newcommand{\into}{\hookrightarrow}
\newcommand{\SO}{\mathrm{SO}}
\newcommand{\norm}[1]{\left\lVert#1\right\rVert}
\title{F-Invariant Minimal Surfaces}
\numberwithin{equation}{section}
\begin{document}

\title{A family of free boundary minimal surfaces in the unit ball}

\author{Anna Siffert}
\address{Mathematisches Institut\\
Einsteinstr. 62\\
48149 M\" unster\\
Germany}
\email{ASiffert@uni-muenster.de}
\thanks{The first author gratefully acknowledges the
supports of the Deutsche Forschungsgemeinschaft (DFG, German Research Foundation) - Project-ID 427320536 - SFB 1442, as well as Germany's Excellence Strategy EXC 2044 390685587, Mathematics M\"unster: Dynamics-Geometry-Structure.
 Most of the work was done when the second named author wrote his Master thesis under the supervision of the first named author at the University of Bonn.}

\author{Jan Wuzyk}

\email{janwuzyk@gmail.com}

\subjclass[2020]{Primary 53C42; Secondary 53A05 }
\keywords{free boundary minimal surfaces, isoparametric foliations}

\begin{abstract}
Using equivariant differential geometry, we provide a family of free boundary minimal surfaces in the unit ball.
\end{abstract}

\maketitle

\section{Introduction}

Minimal surfaces are a classical field of study in geometry. They have been studied since 1760 \cite{Lagrange} and remain an active field of study. Their study began with the problem of finding a surface in $\R^3$ with minimal area given a fixed boundary.
Solutions to this problem have zero mean curvature. 
In what follows we consider any surface which has this property to be a minimal surface.
Inspite of how well studied minimal surfaces are, new examples are still hard are to come by as the partial differential equations (PDEs) describing minimal surfaces are generally intractable. That said examples have played a critical role in shaping research in the field, both by informing the creation of new conjectures and discrediting old. 

\smallskip

A particularly nice source of examples arises from methods from "Equivariant Differential Geometry". This is the study of differential geometric constructions which admit some additional symmetries. Usually the assumption of additional symmetries greatly simplifies the intractable equations describing certain differential geometric objects and allows specific examples to be constructed. This is exactly the case for the class of so-called $F$-invariant minimal surfaces which will be studied in this manuscript.
In 1971 Hsiang and Lawson \cite{hsiang} proved a reduction theorem reducing the problem of finding minimal surfaces in a manifold $M$ invariant under action of a Lie group $G$ to the much simpler problem of finding minimal surfaces in $M/G$. In \cite{WangIso}, a similar reduction theorem was proven for what we call $F$-invariant hypersurfaces. Continuing this work Wang classified all such minimal hypersurfaces in \cite{Wang}. Unfortunately, it seems this paper has not been widely read since, in the following years, many papers have attempted to classify certain simple subsets of $G$-invariant minimal surfaces, for example \cite{Alencar,AlencarAl,McGrath}. Most of the results in these papers are direct consequences of results proved in \cite{Wang}. Due to this we feel it would be helpful to write a more accessible account of (some of) Wang's results.

\smallskip

The notion of a $F$-invariant hypersurface in $\R^n$ is as follows: An isoparametric hypersurface $M$ in the sphere $\Sph^{n-1}$ is a hypersurface with constant principal curvatures. Say this has $g$ distinct principal curvatures with multiplicities $m_1$ and $m_2$.  All the parallel translates of $M$ are also isoparametric hypersurface except for two, the so called focal submanifolds $V_1$ and $V_2$, which are of codimension $m_1+1$ and $m_2+1$ in $\Sph^{n-1}$. There always exists a function $F$ on $\Sph^{n-1}$ such that the level sets of $F$ are exactly the isoparametric hypersurfaces. This partition of $\Sph^{n-1}$ can be extended to $\R^n$ by homotheties. We call a hypersurface composed of a union of leaves of this partition $F$-invariant. In any such family, in $\Sph^{n-1}$, there exists one isoparametric hypersurface which is minimal, say $M^*$. The cone over this, the minimal cone $C(M^*)$ is the simplest example of a $F$-invariant minimal hypersurface. Since isoparametric hypersurfaces generalise hypersurfaces in the sphere invariant under the action of a Lie group, $F$-invariant hypersurfaces generalise hypersurfaces invariant under a subgroup of $O(n)$. 

\smallskip

A subset of minimal surfaces which has attracted particular attention in recent years are those contained in the unit ball which intersect its boundary perpendicularly, called free boundary minimal surfaces. Freidin, Gulian and McGrath \cite{McGrath} already found many examples of free boundary minimal surfaces with some extra symmetry, but they did it in the context of \cite{AlencarAl}. This only covered $O(n)\times O(m)$ symmetries. Using the set up in \cite{Wang} we extend this to the much bigger class of all $F$-invariant minimal surfaces. In particular, we construct two families of $F$-invariant free boundary minimal surfaces. The family $\Sigma_{g,m_1,m_2}^k$ was already constructed in \cite{Wang}, but $\Omega_{g,m_1,m_2}$ is new. Again, see Section \ref{Isoparametric hypersurfaces} for a full list of possible triples. 

\begin{theorem}
	Given an isoparametric hypersurface in $S^{n-1}$ with corresponding triple $(g,m_1,m_2)$ where $n=\frac{m_1+m_2}{2}g+2$, we can construct F-invariant free boundary minimal surfaces in $\R^n$ in the following ways:
	\begin{enumerate}
		\item 	If $n<4g$, for all natural numbers $k$ we can construct a free boundary minimal surface $\Sigma_{g,m_1,m_2}^k$.
		\item If $(g,m_1,m_2) \neq (2,1,5),(2,5,1), (4,1,6), (4,6,1)$ and $n\geq 4g$, we can construct a free boundary minimal surface $\Omega_{g,m_1,m_2}$.
	\end{enumerate}
\end{theorem}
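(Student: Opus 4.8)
The plan is to invoke the reduction theorem of Wang (the $F$-invariant analogue of Hsiang--Lawson) to replace the PDE for a minimal hypersurface by an ODE for a profile curve in the two-dimensional orbit space, and then to produce the required profiles by a phase-plane/shooting analysis whose qualitative dichotomy is governed precisely by the sign of $n-4g$.

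First I would fix the orbit space. The isoparametric foliation, extended to $\R^n$ by homotheties, has orbit space the wedge $W=\{(r,\theta):r\ge 0,\ 0\le\theta\le\pi/g\}$ with the flat metric $dr^2+r^2\,d\theta^2$, the bounding rays $\theta=0$ and $\theta=\pi/g$ being the images of the focal submanifolds $V_1,V_2$. Writing $v(\theta)$ for the volume of the leaf over $(1,\theta)$, one has $v(\theta)\sim\theta^{m_1}$ near $0$ and $v(\theta)\sim(\pi/g-\theta)^{m_2}$ near $\pi/g$, and the area of the $F$-invariant hypersurface generated by a curve $\gamma\subset W$ is, up to a constant, $\int_\gamma r^{n-2}v(\theta)\,ds$. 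Thus $F$-invariant minimal hypersurfaces correspond to geodesics of the conformal metric $f^2(dr^2+r^2\,d\theta^2)$ with weight $f=r^{n-2}v(\theta)$. Since this metric is conformal to the flat one, orthogonality is preserved, so the free boundary condition becomes ``$\gamma$ meets the arc $r=1$ orthogonally'', while smoothness of the surface across a focal submanifold becomes ``$\gamma$ meets the ray $\theta=0$ or $\theta=\pi/g$ orthogonally''.

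Next I would pass to autonomous form. With $t=\log r$ and $\theta=u(t)$ the Euler--Lagrange equation becomes
\[
\ddot u=(1+\dot u^2)\Big(\tfrac{v'(u)}{v(u)}-(n-1)\dot u\Big),
\]
the minimal isoparametric cone being the equilibrium $u\equiv\theta^*$ with $v'(\theta^*)=0$. The free boundary condition now reads $\dot u=0$ at $t=0$, and a capped end is a trajectory reaching $u\in\{0,\pi/g\}$. Linearising at $\theta^*$ gives $\ddot\phi+(n-1)\dot\phi+\beta\phi=0$ with $\beta=-v''(\theta^*)/v(\theta^*)>0$. Using the explicit density together with the identities $\sum_j\cot(\theta+j\pi/h)=h\cot(h\theta)$ and $\sum_j\csc^2(\theta+j\pi/h)=h^2\csc^2(h\theta)$, applied separately to the $m_1$- and $m_2$-angles, a short computation gives $\beta=\tfrac{g^2}{2}(m_1+m_2)=g(n-2)$. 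Hence the characteristic discriminant is $(n-1)^2-4g(n-2)$, whose larger root $1+2g+2\sqrt{g(g-1)}$ lies in $(4g-1,4g)$; so $\theta^*$ is a spiral point when $n<4g$ and a node when $n\ge 4g$, which is exactly the dichotomy in the statement.

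Finally I would run the shooting argument. For $n<4g$ trajectories wind around $\theta^*$ infinitely often; shooting orthogonally from a focal axis (so the surface closes up smoothly there) and recording the successive times where $\dot u=0$, stopping at the $k$-th such turning point and rescaling to $r=1$, yields $\Sigma_{g,m_1,m_2}^k$, recovering Wang's family. For $n\ge 4g$ the node forces a monotone approach, so there is at most one turning point; I would shoot orthogonally from $V_1$ and show, by a continuity/monotonicity argument in the shooting data together with a priori bounds keeping the trajectory inside $W$, that exactly one trajectory attains $\dot u=0$, giving the new surface $\Omega_{g,m_1,m_2}$. The main obstacle is precisely this global control: one must exclude that the trajectory is trapped by the cone, asymptoting $\theta^*$ without ever reaching $\dot u=0$. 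This is what occurs in the borderline cases $n=4g$ with $\min(m_1,m_2)=1$ --- the triples $(2,1,5),(2,5,1),(4,1,6),(4,6,1)$, which are the only ones with $n=4g$ and a unit multiplicity --- where the slow node decay combined with the codimension-two focal end prevents orthogonal arrival at $r=1$, so these must be excluded.
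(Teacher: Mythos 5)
Your reduction to the weighted half-strip, the translation of the free boundary condition into radial tangency, and your linearization are all sound, and Case 1 is essentially the paper's argument in graph coordinates: the capped trajectory is the separatrix $\gamma_1$ emanating from the saddle $S_1$, the spiral at $O_-$ for $n<4g$ gives infinitely many radial tangencies, and rescaling the $k$-th one to $r=1$ gives $\Sigma^k_{g,m_1,m_2}$. Your constant $\beta=g(n-2)$ and discriminant $(n-1)^2-4g(n-2)$ are correct; after clearing a factor $g^2/4$ they agree with the paper's $\Delta=m^2-8(m_1+m_2)$ provided $m=m_1+m_2+2/g$ (the paper's printed $m=m_1+m_2+g/2$ is evidently a typo, since for $g=4$ it would give $\Delta=(m_1+m_2-2)^2\ge 0$ always, contradicting the spiral at $(4,1,1)$; your computation fixes this). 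Two small omissions in Case 1: you should verify, as the paper does, that the rescaled curve meets the sphere $r=1$ \emph{only} at the chosen tangency (this holds because $\alpha\in[-\pi/2,\pi/2]$ along $\gamma_1$ forces $r$ to be strictly monotone), and note that your coordinate $t=\log r$ is singular both at the cap (vertical tangent, $v'/v\sim m_1/u$) and at any critical point of $r$, which is why the paper works in the $(\theta,\alpha)$ phase plane. Also your claim that $1+2g+2\sqrt{g(g-1)}\in(4g-1,4g)$ fails at $g=1$, where the root equals $3=4g-1$ and the equilibrium is a degenerate node rather than a spiral.

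The genuine gap is Case 2: your construction cannot produce $\Omega_{g,m_1,m_2}$. Shooting orthogonally from $V_1$ has no shooting parameter: by homothety invariance all profile curves capping orthogonally on a focal ray are dilates of one another, so they all project to the single phase-plane separatrix $\gamma_1$. For stable triples with $n\geq 4g$ this one orbit tends to the node $O_-=(\theta^*,0)$ monotonically with $\alpha\in(0,\pi/2)$, so it never attains $\dot u=0$ at finite time — the corresponding complete Type I surface meets no sphere orthogonally, and "exactly one trajectory attains $\dot u=0$" is false; your scheme yields nothing, and in any case a disk-type surface is the wrong topology. The paper's $\Omega$ is an annulus-type surface coming from a Type II curve: one takes the one-parameter family $\Gamma_\epsilon$ with $r(0)=\tfrac12$, $\theta(0)=\theta^*$, $\alpha(0)=-\epsilon$, lets $t_+$, $t_-$ be the radial times where $\alpha=0$ and $\alpha=-\pi$, shows $r(t_-)\to 0$ while $r(t_+)>\tfrac12$ as $\epsilon\to 0$ (via the blow-up of $\frac{d\alpha}{ds}=\frac1r\bigl(-(n-1)\sin\alpha+H(\theta)\cos\alpha\bigr)$ along the shrinking neck), obtains the reverse inequality as $\epsilon\to\pi$ by symmetry, applies the intermediate value theorem to get $r(t_-)=r(t_+)$, and rescales this common value to $1$. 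Your rationale for the excluded triples is also not the actual mechanism: it is not a "slow node decay at $n=4g$ with unit multiplicity" effect — $(2,2,4)$ also has $n=4g$ and is \emph{not} excluded — but the instability established by Wang's barrier argument, namely that precisely for $(2,1,5)$, $(4,1,6)$ (and transposes) the separatrix $\gamma_1$ crosses the line $\theta=\theta^*$; stability is what supplies the barriers keeping $\theta$ bounded away from $\pi/2$ in the estimates above, so the exclusion enters through the intermediate-value argument, not through arrival at $r=1$.
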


\textbf{Organisation:}
Preliminaries are provided in Section\,\ref{prelim}: we give a brief review on Wang's classification of $F$-invariant minimal surfaces in $\R^n$ since this will be relevant later when studying free boundary minimal surfaces in balls.
Afterwards we give some preliminaries on isoparametric hypersurfaces before we finally provide the reduction theorem.
In Section\,\ref{analysis}, we derive the vector field which describes the dynamical system we will be studying.
Further, in this section we begin the analysis of this dynamical system. 
In Section\,\ref{profile}, we use the classification of orbits to derive a classification of so called profile curves. 
Finally, in Section\,\ref{fbms}, we use our set up to construct  a new family of free boundary minimal surfaces in the unit ball. \\

\section{Preliminaries}
\label{prelim}
 In Subsection\,\ref{wang} we give a brief review on Wang's construction of $F$-invariant minimal surfaces in $\R^n$. Subsection\,\ref{Isoparametric hypersurfaces} provides preliminaries 
  on
isoparametric hypersurfaces. Subsection\,\ref{vectorcalculus} contains the proof of the reduction theorem. 

\subsection{Wang's construction of $F$-invariant minimal surfaces in $\R^n$}
\label{wang}
Wang's approach to classifying all $F$-invariant minimal surfaces in $\R^n$ works as follows: One can use well know techniques from equivariant differential geometry to show that it suffices to consider $F$-invariant variations to show that a $F$-invariant hypersurface is minimal. Given a fixed $F$ we can parametrise all the isoparametric hypersurfaces in $\Sph^{n-1}$ and their translates by two coordinates, $(r,\varphi)$ in a domain $D_g$. The problem of showing a $F$-invariant surface is minimal is thus equivalent to showing that its projection to the domain $D_g$, a so called profile curve, is a geodesic. We explicitly derive the metric on $D_g$ which depends just on the 3 numbers $(g,m_1,m_2)$. Using the invariance of geodesics in $D_g$ to homotheties, we reduce the problem of finding geodesics further to simply the analysis of a dynamical system in the plane. We can then use the well developed theory of dynamical systems to completely classify the behaviour of the orbits of this system, each of which correspond to a family of profile curves and further minimal surfaces. 
This allows us to answer questions about the embeddedness, asymptotic behaviour, topological type and stability of all $F$-invariant minimal surfaces. These are summarised in the following two theorems. 
A full list of triples satisfying these conditions is given in Section \ref{Isoparametric hypersurfaces}.

\begin{theorem}[Wang \cite{Wang}]
	\label{A}
	Any complete $F$-invariant minimal surfaces $M$ in $\R^n$, with triple $(g,m_1,m_2)$, such that $n<4g$, is of one of the following types:
	\begin{itemize}
		\item Cone : $M$ is the cone $C(M^*)$ where $M^*$ is the unique minimal isoparametric hypersurface in $\Sph^{n-1}$. Further, $M$ is unstable if $g>1$ and area minimizing if $g=1$ \cite{Wang}.
		\item Type I : $M$ is embedded and asymptotic to the cone $C(M^*)$, intersecting it infinitely often. Further, M is diffeomorphic to $\Sph^n-V_1$ or $\Sph^n-V_2$ and unstable.
		\item Type II : $M$ is immersed, intersecting itself infinitely often and asymptotic to the cone $C(M^*)$. Further, M is diffeomorphic to $\Sph^n-\{V_1\cup V_2\}$ and unstable.
	\end{itemize}
\end{theorem}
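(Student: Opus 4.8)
The plan is to run the reduction sketched in Subsection~\ref{wang} and then read off all three cases from the phase portrait of the resulting planar system. First I would invoke the reduction theorem of Subsection~\ref{vectorcalculus}: a complete $F$-invariant hypersurface is minimal precisely when its profile curve is a geodesic of the weighted domain $(D_g,\tilde g)$, where, writing a point of $\R^n\setminus\{0\}$ as a leaf $M_\varphi$ scaled to radius $r$, the area functional is $\int r^{\,n-2}\,v(\varphi)\,\sqrt{dr^2+r^2\,d\varphi^2}$ and hence $\tilde g=r^{2(n-2)}v(\varphi)^2\,(dr^2+r^2\,d\varphi^2)$. Here $v(\varphi)=\prod_{k=0}^{g-1}\bigl|\sin(\varphi+\tfrac{k\pi}{g})\bigr|^{m_k}$ is the leaf-volume density, $\varphi$ ranges over the transversal interval $[0,\pi/g]$ whose endpoints correspond to the focal submanifolds $V_1,V_2$, and $(\log v)'(\varphi)=\sum_k m_k\cot(\varphi+\tfrac{k\pi}{g})$ is the mean curvature of $M_\varphi$, so its unique interior zero $\varphi^*$ marks the minimal leaf $M^*$. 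Substituting $t=\log r$ turns $\tilde g$ into the conformal metric $e^{2(n-1)t}v(\varphi)^2\,(dt^2+d\varphi^2)$, whose log-density $u=(n-1)t+\log v(\varphi)$ has $\partial_t u=n-1$ constant. This constancy is exactly the homothety invariance, and it lets me record the profile direction by the angle $\theta$ with the $t$-axis and pass to the autonomous planar system
\[
\dot\varphi=\sin\theta,\qquad \dot\theta=-(n-1)\sin\theta+(\log v)'(\varphi)\cos\theta,
\]
the equation $\dot t=\cos\theta$ decoupling.

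Next I would locate the equilibria and linearize. The only fixed points are $(\varphi^*,0)$ and $(\varphi^*,\pi)$; both have $\varphi\equiv\varphi^*$ with $\theta$ radial, so each represents the minimal cone $C(M^*)$, traced outward resp.\ inward. Linearizing at $(\varphi^*,0)$ gives a matrix with trace $-(n-1)$ and determinant $-(\log v)''(\varphi^*)$. Using $\csc^2=1+\cot^2$ together with $\dim M^*=n-2$ and $|A|^2_{M^*}=\sum_k m_k\cot^2(\varphi^*+\tfrac{k\pi}{g})$ I obtain
\[
-(\log v)''(\varphi^*)=\sum_{k}m_k\csc^2\!\bigl(\varphi^*+\tfrac{k\pi}{g}\bigr)=(n-2)+|A|^2_{M^*}=g(n-2),
\]
the last equality being the standard value $|A|^2_{M^*}=(g-1)(n-2)$ for an isoparametric minimal leaf. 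Hence the eigenvalues solve $\mu^2+(n-1)\mu+g(n-2)=0$ and are non-real exactly when $(n-1)^2<4g(n-2)$. A short computation (the upper root of $n^2-(2+4g)n+(1+8g)$ lies in $(4g-1,4g)$) shows this is equivalent, for integers, to $n<4g$. Thus in the stated range $C(M^*)$ is a spiral equilibrium, which forces the qualitative oscillation appearing in all three cases.

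I would then complete the global phase portrait on the strip $\{(\varphi,\theta):0\le\varphi\le\pi/g\}$. The cone is the spiral equilibrium; every other orbit must, by a Poincar\'e--Bendixson/monotonicity analysis of the vector field together with control of the boundary behaviour, either also limit onto the focus or run into an edge $\varphi\in\{0,\pi/g\}$. An orbit meeting an edge does so where $M_\varphi$ collapses onto a focal submanifold, and the smooth closing-up condition there caps the hypersurface off over that $V_i$; the resulting surface winds once out to the spiral and is embedded, giving Type~I with the asserted topology $\Sph^{\,n}\setminus V_i$ and infinitely many intersections with $C(M^*)$ produced by the spiral. An orbit that instead limits onto the focus at both ends sweeps all radii and crosses $\varphi=\varphi^*$ from both sides, so the corresponding hypersurface self-intersects and is merely immersed, giving Type~II with topology $\Sph^{\,n}\setminus(V_1\cup V_2)$. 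This global step---excluding periodic orbits and stray limit sets, and matching orbit types to embeddedness and diffeomorphism type---is where I expect the main difficulty to lie.

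Finally, for the stability assertions I would reduce the second variation to $F$-invariant normal fields, which converts the index form of $C(M^*)$ into a one-dimensional Jacobi/Sturm problem along the cone ray governed by the same linearization. When $n<4g$ the equilibrium is a genuine focus, so the Jacobi equation is oscillatory and the cone carries conjugate points; a compactly supported oscillating test field then makes the index form negative, proving $C(M^*)$ unstable for $g>1$, and the same oscillation propagates along the asymptotics to show Types~I and~II unstable. For $g=1$ the leaf $M^*$ is a totally geodesic $\Sph^{\,n-2}\subset\Sph^{\,n-1}$, so $C(M^*)$ is a hyperplane through the origin and is area-minimizing, which settles the remaining claim.
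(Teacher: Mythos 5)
Your proposal follows the same architecture as the paper (and Wang): reduce via Theorem~\ref{Reduction} to geodesics of $(r^{n-2}v(\varphi))^2(dr^2+r^2d\varphi^2)$ on $D_g$, use homothety invariance to pass to an autonomous planar system, classify its orbits, and translate back into profile curves. Your local analysis is correct and nicely packaged: the trace $-(n-1)$ and determinant $-(\log v)''(\varphi^*)=(n-2)+|A|^2_{M^*}=g(n-2)$ reproduce, after the change of variables $\theta=\tfrac{g}{2}\varphi$ and a time rescaling by $\sin 2\theta$, exactly the paper's discriminant condition at $O_\pm$. The genuine gap is precisely the step you defer: the global phase portrait. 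Poincar\'e--Bendixson does not \emph{exclude} periodic orbits; it is the tool that says limit sets are equilibria, closed orbits, or separatrix cycles, so closed orbits must be ruled out by a separate argument before any trichotomy can be read off. This is where the paper invests its one genuinely geometric argument (Theorem~\ref{noperiodic}): a periodic orbit would generate a $\delta$-self-similar minimal hypersurface, the monotonicity formula then forces the area ratio $\rho^{-(n-1)}\mathrm{Area}(M\cap B_\rho)$ to be constant, and a second-derivative comparison of $\log\cos\alpha$ against $\log v(\varphi)$ at a crossing of the $\theta$-axis gives a contradiction. Nothing in your sketch supplies this (a Bendixson--Dulac divergence computation does not obviously work for this field), and without it the Cone/Type~I/Type~II classification is unproven. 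The same applies to the geometric conclusions you read off from ``spiralling'': embeddedness of Type~I comes in the paper from $-\tfrac{\pi}{2}<\alpha<\tfrac{\pi}{2}$ forcing $dr/dt>0$ (Theorem~\ref{ProfileA}); the infinitely many intersections with $C(M^*)$ and the infinitely many self-intersections of Type~II require the oscillation-plus-radial-growth arguments there --- ``crosses $\varphi^*$ from both sides'' does not by itself yield a self-intersection. Note also that the saddles $S_i$ (Lemma~\ref{Si}), whose non-vertical separatrices \emph{are} the Type~I orbits, are not equilibria of your interior system at all, since $(\log v)'$ blows up on the edges; one needs the rescaled field to analyze them, which is part of the ``boundary control'' you wave at.

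A second, smaller but real, error: your claimed integer equivalence $(n-1)^2<4g(n-2)\iff n<4g$ fails at $g=1$, where the discriminant is $(n-3)^2\geq 0$. So for the triple $(1,1,1)$ ($n=3<4g$), which the theorem covers, your own formula shows the equilibrium is a degenerate node, not a focus, and indeed the rotationally invariant picture in $\R^3$ (catenoids and planes) has no spiralling: those surfaces are embedded and meet the cone at most once, contradicting the Type~I/II descriptions. This defect is shared by the statement and by the paper's local lemma, but your computation exposes it, and a complete proof would have to treat $g=1$ separately; only the ``area minimizing if $g=1$'' clause survives there. Finally, your instability sketch via an oscillatory Jacobi field is a plausible plan but is not carried out; since the paper also cites Wang for stability, this last point is a gap only relative to a fully self-contained proof.
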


\begin{theorem}[Wang \cite{Wang}]
	\label{B}
	Any complete $F$-invariant minimal surfaces $M$ in $\R^n$, with triple $(g,m_1,m_2)$, such that $n\geq 4g$, is of one of the following types:
	
	\begin{itemize}
		\item Cone : $M$ is the cone $C(M^*)$. Further, $C(M^*)$ is stable. If $(g,m_1,m_2)\neq (2,1,5)$ or $(4,1,6)$, $C(M^*)$ is also area minimizing but otherwise not \cite{Wang}.	
		\item Type I :  M is embedded and asymptotic to the cone $C(M^*)$. Further, $M$ is diffeomorphic to $\Sph^n-V_1$ or $\Sph^n-V_2$. If $(g,m_1,m_2)\neq (2,1,5)$ or $(4,1,6)$, M is stable, otherwise not.	
		\item Type II : $M$ is embedded and asymptotic to the cone $C(M^*$, intersecting it at most once.  Further, M is diffeomorphic to $\Sph^n-\{V_1\cup V_2\}$ and is unstable.		
	\end{itemize}
\end{theorem}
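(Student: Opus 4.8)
\textbf{Proof proposal for Theorem \ref{B}.}

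The plan is to analyze the planar dynamical system in the domain $D_g$ whose orbits correspond to profile curves of complete $F$-invariant minimal surfaces, exactly as in the setup described above for the $n < 4g$ case (Theorem \ref{A}), but now under the opposite hypothesis $n \geq 4g$. Using the scaling invariance of geodesics in $D_g$ under homotheties, a geodesic through the domain is determined by an orbit of an autonomous vector field in the plane, so the entire classification reduces to identifying the qualitative behavior of the orbits of this system. First I would locate the fixed points and the distinguished invariant rays: the origin corresponds to the cone $C(M^*)$ over the minimal isoparametric hypersurface $M^*$, and the two boundary rays of $D_g$ correspond to the focal submanifolds $V_1$ and $V_2$. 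The cone itself is the straight-line orbit along the radial direction, and the key point is that its local stability type changes as $n$ crosses $4g$.

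The heart of the matter is a linearization analysis at the singular point corresponding to the cone. The second-order ODE governing a profile curve can be rewritten (after the homothety reduction) as a first-order system whose linearization at the cone direction has characteristic exponents depending explicitly on $g$, $m_1$, $m_2$ through the quantity $n = \frac{m_1+m_2}{2}g + 2$. The condition $n \geq 4g$ is precisely the threshold at which these exponents become real (as opposed to complex with nonzero imaginary part, which is what produces the spiralling/oscillatory behavior responsible for the infinitely-many intersections seen in Theorem \ref{A}). Thus in the regime $n \geq 4g$ the cone is a node-type (non-spiralling) configuration, and orbits approach it monotonically rather than winding around it. From this one reads off that Type I orbits limit onto the cone without oscillation — giving embedded surfaces asymptotic to $C(M^*)$ — and Type II orbits cross the cone direction at most once, again yielding embedded surfaces; this is exactly the dichotomy between the oscillatory $n<4g$ picture and the monotone $n \geq 4g$ picture.

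Next I would extract the global topological and asymptotic statements from the phase portrait. An orbit emanating from one focal ray and asymptotic to the cone, staying on one side of the radial direction, sweeps out a surface that is a graph over $\Sph^{n-1}$ minus a single focal submanifold, hence diffeomorphic to $\Sph^n - V_1$ or $\Sph^n - V_2$ (Type I); an orbit running from one focal ray to the other, crossing the cone once, misses both focal submanifolds and is diffeomorphic to $\Sph^n - \{V_1 \cup V_2\}$ (Type II). The embeddedness in both cases follows because, without the oscillation present when $n < 4g$, each orbit meets every radial ray at most the asserted number of times, so the associated surface has no self-intersections. The asymptotics to $C(M^*)$ follow from the fact that every non-cone orbit is attracted to (or repelled along) the cone direction as the radial coordinate tends to infinity, which is again a direct consequence of the node-type linearization.

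Finally, the stability and area-minimizing assertions require separate input. The stability of the cone in the $n \geq 4g$ regime, and correspondingly of the Type I surfaces, is governed by the sign of a Hardy-type quantity coming from the second variation of area restricted to $F$-invariant variations; this reduces to a Jacobi/Hardy inequality on the reduced one-dimensional problem, and the borderline failures occur exactly for the exceptional triples $(2,1,5)$ and $(4,1,6)$, where the relevant exponent sits at the critical value and the stability inequality degenerates. I would treat area-minimizing via a calibration or foliation argument: the family of parallel Type I leaves foliates a region and calibrates the cone, establishing area-minimization away from the two exceptional triples, while for those triples the same construction fails to calibrate, matching the ``otherwise not'' clause. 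The main obstacle I anticipate is precisely this stability/area-minimizing step: the dynamical-systems analysis yields the geometric classification (Cone / Type I / Type II, embeddedness, diffeomorphism type, asymptotics) fairly mechanically once the linearization is understood, but pinning down the exact exceptional triples where stability and area-minimization fail requires a careful, case-sensitive analysis of the critical exponent rather than a soft argument, and this is where the numerology of $(g,m_1,m_2)$ genuinely enters.
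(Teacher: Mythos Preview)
Your overall architecture matches the paper's: reduce to profile curves in $D_g$, pass to the autonomous planar system, linearize at the cone-type fixed point, and observe that the threshold $n=4g$ separates the focal (spiralling) from the nodal (monotone) regime. That part is fine and is exactly how the paper (following Wang) proceeds via the lemmas on the local picture at $O_\pm$ and Theorems~\ref{orbitB}, \ref{ProfileB}, \ref{ProfileC}.

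There is, however, a genuine error in your description of Type~II. You say a Type~II orbit ``runs from one focal ray to the other, crossing the cone once.'' This is not what happens: in the paper's classification, a Type~II orbit has $L^-(\gamma)=O_+$ and $L^+(\gamma)=O_-$, so the corresponding profile curve is \emph{doubly} asymptotic to the cone ray $\Gamma^*$ (with a unique minimum of $r$) and never touches either boundary ray of $D_g$. The diffeomorphism type $\Sph^n-\{V_1\cup V_2\}$ comes from the profile curve avoiding both axes, not from connecting them. This misreading propagates into your embeddedness sketch: the claim that ``each orbit meets every radial ray at most the asserted number of times'' is not what proves Type~II embeddedness. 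The paper's argument (Theorem~\ref{ProfileB}) is more delicate: one fixes a Type~I profile curve $r_1(\theta)$, forms $f(r,\theta)=r/r_1(\theta)$ so that the level sets of $f$ are exactly the Type~I curves, and then uses Rolle's theorem on $f\circ\Gamma$ to show that a self-intersection would force $\Gamma$ to be tangent to a Type~I geodesic, contradicting uniqueness of geodesics. Monotonicity of $\theta$ alone does not suffice because $r$ is not monotone along a Type~II curve.

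A second gap is that you omit the ``no periodic orbits'' step (Theorem~\ref{noperiodic}), which the paper proves via the monotonicity formula for minimal surfaces and which is what allows the Poincar\'e--Bendixson argument to force every non-separatrix orbit to limit on $O_\pm$. Finally, your treatment of the exceptional triples conflates two distinct mechanisms: the paper isolates $(2,1,5)$ and $(4,1,6)$ first by a barrier-curve argument (whether $\gamma_1$ crosses $\theta=\theta^*$), reducing to positivity of an explicit polynomial, and separately handles stability/area-minimization; your ``Hardy-type quantity at a critical exponent'' gestures at the latter but does not account for the former, which is where the numerology actually enters the dynamical picture.
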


\subsection{Brief review of isoparametric hypersurfaces}
\label{Isoparametric hypersurfaces}


In this subsection we discuss isoparametric hypersurfaces and some of their properties. More details can be found in \cite{Cecil,survey,Harmonic}.

\smallskip

Given a Riemannian manifold $M$, we call a smooth function $f: M\to \R$ an \textit{isoparametric function} if there exist smooth functions $T, S:\R\to\R$ such that:
\begin{equation}
	\norm{\nabla f}^2 = T\circ f, \qquad \Delta f = S\circ f.
	\label{dag}
\end{equation}
Given such a function $f$ we can decompose $M$ into a union of fibres of $f$. 
We call a fibres with maximal rank \textit{regular}. The subset of $M$ which consists of such fibres is denoted by $M^\circ$. The complement of $M^\circ$ in $M$ consists of the \textit{singular} fibres. The regular fibres are called \textit{isoparametric hypersurfaces} and the singular fibers are called \textit{focal submanifolds}. There is a particularly interesting theory of such functions in the case $M=\Sph^{n-1}$ and from now on we restrict ourselves to this case. Geometrically, (\ref{dag}) implies that isoparametric hypersurfaces in spheres have constant principal curvatures. In fact, any such hypersurface also gives rise to an isoparametric function. For an isoparametric hypersurface $M$, we denote the number of distinct principal curvatures by $g$.  In \cite{Munzner1}, M\"unzner proved that if we order the principal curvatures by value and let $m_i$ be the multiplicity of the $i$th principal curvature, then 
\[m_i=m_{i+2} \quad i \mod g.\]
Further, in \cite{Munzner2}, he showed that the only possible values for $g$ are $g\in\{1,2,3,4,6\}$. Directly from counting dimensions, we see that these numbers satisfy the identity 
\[\frac{m_1+m_2}{2}g=n-2\]
in $\Sph^{n-1}$. We can actually describe the geometry of a family of isoparametric hypersurfaces quite well. Let $f:\Sph^{n-1}\to \R$ be an isoparametric function and assume the fibres of $f$ are connected and $f(\Sph^{n-1})=[c_1,c_2]$. The focal submanifolds $f^{-1}(c_i)=V_i$, $i\in\{1,2\}$, are smooth minimal submanifolds of $\Sph^{n-1}$ of codimension $m_i+1$ and dist$(V_1,V_2)=\frac{\pi}{g}$. For any $c\in(c_1,c_2)$, $f^{-1}(c)$ is a smooth compact hypersurface which is the tube $M_{\varphi_i}$ of constant radius $\varphi_i$ around $V_i$, $\varphi_i\in(0,\frac{\pi}{g})$. The principal curvatures of $M_\varphi$ are $\cot(\varphi)$, $\cot(\varphi+\frac{\pi}{g})$, $\cot(\varphi+\frac{2\pi}{g})\dots$ with multiplicities $m_1,m_2,m_1 \dots$ . The volume of $M_\varphi$ is given by 
\[v(\varphi)=C\left(\sin\frac{g\varphi}{2}\right)^{m_1}\left(\cos\frac{g\varphi}{2}\right)^{m_2}\]
for some constant $C$ and the mean curvature is given by 
\[H(\varphi)= -\frac{d}{d\varphi}\log v(\varphi).\]

\smallskip

Examples for isoparametric hypersurfaces in spheres include the following:
\begin{itemize}
	\item For g = 1, the focal submanifolds are poles and the $M_\varphi$ are spheres parallel to the equator.
	\item For g = 2, the focal submanifolds are $\Sph^{m_1}$, $\Sph^{m_2}$ and the isoparametric hypersurfaces are then Clifford tori $\Sph^{m_1}(r)\times \Sph^{m_2}(s)$ where $r^2+s^2=1$.
	\item More generally, if $G$ is some Lie subgroup of $\SO(n)$ then the orbits of $G$ form a family of isoparametric hypersurfaces (under some restrictions).
	We call such a family \textit{homogeneous}. If $m_1=1$ or $m_2=1$ then this is the case. Many, but not all, isoparametric hypersurfaces are of this form. Homogeneous isoparametric hypersurfaces can be completely classified based on \cite{hsiang}.
	\item For $g=3$, Cartan \cite{Cartan39} proved that we have $m_1=m_2\in \{1,2,4,8\} $. Further, he also showed that the focal submanifolds are given by embeddings of $\mathbb{RP}^2$, $\mathbb{CP}^2$ or $\mathbb{HP}^2$ and the $M_\varphi$ are tubes around these.
	\item For $g=4$, there are two families of homogeneous isoparametric hypersurfaces. In \cite{FKM}, Ferus, Karcher and M\"unzner constructed examples of inhomogeneous isoparametric hypersurfaces based on representations of Clifford algebras with $g=4$. These are called FKM-type isoparametric hypersurfaces. The classification of isoparametric hypersurfaces with $g=4$ was completed by Chi in \cite{chi2020}. 
	\item For $g=6$, Abresch \cite{Abresch} proved that $m_1=m_2\in \{1,2\}$. In \cite{Dorfmeister}, Dorfmeister and Neher showed all examples with $m_1=1$ are homogeneous, but the classification is not complete for $m_1=2$ \cite{siffert}.
\end{itemize}
In summary, the following triples are possible. We list them with the conditions $n<4g$ and $n\geq 4g$ as these will be critical in what follows.

\medskip

\begin{center}
\begin{tabular}{ p{1.5cm}|p{6cm}|p{6cm} }
	& $n<4g$ & $n\geq 4g$\\
	\hline
	$g=1:$ & $(1,1,1)$ & $(1,m,m)$  for $m\geq 2$\\
	\hline
	$g=2:$ & $(2,m_1,m_2)$  for $m_1+m_2 <6 $& $(2,m_1,m_2)$  for $m_1+m_2\geq 6$\\
	\hline
	$g=3:$ & $(3,1,1)$, $(3,2,2)$ & $(3,4,4)$, $(3,8,8)$\\
	\hline
	$g=4:$ & $(4,1,1)$, $(4,1,2)$, $(4,1,3)$, $(4,1,4)$, $(4,1,5)$, $(4,2,2)$, $(4,2,3)$ & $(4,4,5)$ or $(4,m_1,m_2)$ for $m_1+m_2\geq 7$ and $m_1+m_2+1$ a multiple of $2^{\phi(m_1-1)}$, where $\phi(l)$ is the number of integers $s$ with $1 \leq s \leq l$ and $s \equiv 0, 1, 2, 4 \mod 8$, see Stolz \cite{Stolz}.\\
	\hline
	$g=6:$ & $(6,1,1)$, $(6,2,2)$ & \\
\end{tabular}
\end{center}


\subsection{Reduction theorem}
\label{vectorcalculus}

To prove the reduction theorem we will use a standard reduction technique from equivariant differential geometry. In this section, we introduce this technique and the notions from vector calculus needed to understand it. We finally provide the reduction theorem.
This section is based on \cite{EellsRatto}.

\smallskip

 We begin by fixing some notation. We let $(M^m,g)$ and $(N^n,h)$ be Riemannian manifolds of dimensions $m$ and $n$ respectively and let $\varphi:M\to N$ be a smooth map.
A minimal surface is a critical point of the volume functional. Precisely, we say \[\varphi:M\hookrightarrow N\] 
is minimal if $\varphi$ is a Riemannian immersion and $\varphi$ is an extremum of the volume functional
\[V(\varphi)= \int_M |\Lambda^m d\varphi|dx\]
with respect to variations $\varphi_t$ through Riemannian immersions. 
Note that we endow $M$ with the induced metric $\varphi^*h$ and thus $|\Lambda^m d\varphi|dx$ is the associated volume form.
The \textit{tension field} of $\varphi$ is defined to be the trace of the second fundamental form
\[\tau(\varphi)=\tr \nabla d\varphi\in \varphi^{-1}TN.\]
Let $\varphi_t$ be a variation through Riemannian immersions, then we have
\[\frac{d V(\varphi_t)}{dt}_{\lvert t=0} = -\int_M \left\langle\tau(\varphi) , \frac{\partial\varphi_t}{\partial t}_{\lvert t=0}\right\rangle \nu,\]
where $\nu$ is the volume element induced by $\varphi_0^*h=\varphi^*h$. As a corollary of this, we have that $M$ is minimal if and only if 
\[\tau(\varphi)=0.\] 
To state the reduction theorem, we have to define a few more notions. 
We call a map $F:M\to N$ \textit{transnormal} if there exists a function, $T: N\to \R$, such that 
\[\norm{\nabla F}^2 = T\circ F.\]
The fibres of such a map have constant rank and we call the set on which they have maximal rank the \textit{regular} set $M^0\subset M$. Further, there exists a unique \textit{quotient metric} $h_F$ on $N^0=F(M^0)$, such that $F:(M^0,g)\to (N^0,h_F)$ is a Riemannian submersion. In fact, if two transnormal maps have the same fibres, then the quotient metrics are the same. We call a map $F:M\to N$ \textit{isoparametric} if it is transnormal and additionally there exists a vector field $S\in \C(TN)$, such that 
\[\tau(F)=S\circ F.\]
Note that this definition is in accordance with that of Subsection\,\ref{Isoparametric hypersurfaces}, i.e. for $N=\R$ we recover the definition of Subsection\,\ref{Isoparametric hypersurfaces}.
Finally, we introduce the notion $(\rho,\sigma)$-equivariance. Let $\rho:M\to P$, $\sigma:N\to Q$ be Riemannian submersions. We say a map $\varphi:M\to N$ is \textit{$(\rho, \sigma)$-equivariant} if there exits $\bar{\varphi}$ such that the following diagram commutes: 
\begin{equation*}
\begin{tikzcd}
M \arrow{r}{\varphi} \arrow{d}{\rho} & N \arrow{d}{\sigma}\\
P \arrow{r}{\bar{\varphi}} & Q
\end{tikzcd}.
\end{equation*}

\begin{theorem}[ Theorem 4.5, \cite{EellsRatto}]
\label{Thm:EllisRato}
Let $\varphi$ be a $(\rho,\sigma)$-equivariant map. Assume that
\begin{itemize}
	\item[(a)] $\sigma,\rho$ are isoparametric,
	\item[(b)] $d\varphi(T^HM)\subset T^HN$,
	\item[(c)] $\varphi:\rho^{-1}(z)\to \sigma^{-1}(\bar{\varphi}(z))$ is a Riemannian submersion for all $z \in P$ , with respect to the induced metrics,
\end{itemize}
then $\tau(\varphi)=0$ iff it is stationary with respect to $(\rho,\sigma)$-equivariant variations.
\end{theorem}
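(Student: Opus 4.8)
\medskip

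\noindent\textbf{Proof strategy.} The plan is to prove the two implications separately. The forward implication is immediate: if $\tau(\varphi)=0$, then the first variation formula recorded above gives $\frac{d}{dt}V(\varphi_t)\big|_{t=0}=-\int_M\langle\tau(\varphi),\partial_t\varphi_t|_{t=0}\rangle\,\nu=0$ for \emph{every} compactly supported variation, and in particular for every $(\rho,\sigma)$-equivariant one, so $\varphi$ is stationary with respect to equivariant variations.

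The content lies in the converse, which I would prove by a principle of symmetric criticality. First I would describe the equivariant variation fields: a variation $\varphi_t$ of $\varphi$ through $(\rho,\sigma)$-equivariant maps is obtained by lifting a variation $\bar\varphi_t$ of the reduced map $\bar\varphi$, so its generator $V=\partial_t\varphi_t|_{t=0}$ is the $\sigma$-horizontal lift of a field $\bar V$ along $\bar\varphi$; in particular $V$ takes values in $T^HN$ and is basic (constant along the fibres of $\rho$). The crucial claim, which I call the Key Lemma, is that $\tau(\varphi)$ is itself horizontal and basic in exactly this sense. Granting it, one chooses a basic cutoff $\eta=\bar\eta\circ\rho$ and takes the admissible equivariant field $V=\eta^2\,\tau(\varphi)$; the first variation formula together with stationarity then yields $\int_M\eta^2|\tau(\varphi)|^2\,\nu=0$, and letting $\eta\uparrow 1$ forces $\tau(\varphi)=0$. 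Equivalently, because equivariant $V$ are horizontal and basic, the pairing $\int_M\langle\tau(\varphi),V\rangle\,\nu$ descends to an integral over $P$ that detects precisely the basic horizontal part of $\tau(\varphi)$, so the Key Lemma is exactly what is needed for equivariant stationarity to propagate to the full tension field.

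To prove the Key Lemma I would fix a local orthonormal frame of $M$ adapted to the splitting $TM=T^HM\oplus T^VM$ and the analogous splitting of $TN$, and decompose $\tau(\varphi)=\tr\nabla d\varphi=\sum_a\nabla d\varphi(X_a,X_a)+\sum_\alpha\nabla d\varphi(U_\alpha,U_\alpha)$, with the $X_a$ horizontal and the $U_\alpha$ vertical. Hypothesis (b) forces the horizontal block to take values in $T^HN$, and since $\sigma\circ\varphi=\bar\varphi\circ\rho$ its $\sigma$-projection is governed entirely by $\bar\varphi$ and by the mean curvature of the fibres, which is basic because $\rho$ and $\sigma$ are isoparametric (hypothesis (a)); this shows $\tau(\varphi)^H$ is basic. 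For the vertical block, hypothesis (c) guarantees that $\varphi$ restricts to a Riemannian submersion of each fibre, so $d\varphi(U_\alpha)\in T^VN$, and an O'Neill/Gauss-type computation expresses $\sum_\alpha\nabla d\varphi(U_\alpha,U_\alpha)$ through the tension field of the fibre restriction together with the mean-curvature vectors of the source and target fibres. Hypothesis (a) makes these mean-curvature vectors basic and, via the equivariance and (b), $\sigma$-related under $d\varphi$, so the vertical contributions cancel and $\tau(\varphi)^V=0$.

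The main obstacle is precisely this vertical computation: showing $\tau(\varphi)^V=0$ is where all three hypotheses must interact simultaneously. The delicate point is that the intrinsic tension of the fibre restriction and the two second-fundamental-form corrections have to cancel exactly, which forces one to compare the mean curvatures of the fibres of $\rho$ and of $\sigma$ under $\varphi$; it is here that the isoparametric hypothesis (a)---rather than mere transnormality or the Riemannian-submersion property alone---is indispensable, since it is what guarantees that these mean curvatures are basic and compatibly $\sigma$-related. Once $\tau(\varphi)^V=0$ and the basicness of $\tau(\varphi)^H$ are established, the remainder is the short symmetric-criticality argument of the second paragraph.
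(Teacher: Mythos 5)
First, note that the paper contains no proof of this statement to compare against: it is quoted as Theorem 4.5 of Eells--Ratto and used as a black box in the proof of the reduction theorem. Judged on its own, your proposal has a genuine gap, and it is located in your description of the admissible variations. A variation $\varphi_t$ through $(\rho,\sigma)$-equivariant maps is constrained only in its $\sigma$-projection: each $\varphi_t$ must satisfy $\sigma\circ\varphi_t=\bar\varphi_t\circ\rho$ for some $\bar\varphi_t$. This does \emph{not} force the generator $V=\partial_t\varphi_t|_{t=0}$ to be a horizontal lift of a field along $\bar\varphi$; it only forces $d\sigma(V)$ to be basic. In particular one may keep $\bar\varphi_t\equiv\bar\varphi$ and move $\varphi$ arbitrarily inside the fibres of $\sigma$, so \emph{every} vertical field along $\varphi$ generates an equivariant variation. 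Hence equivariant stationarity tests the vertical part of $\tau(\varphi)$ directly, and your assertion that it ``detects precisely the basic horizontal part'' is incorrect.

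This mischaracterization is what forces you into the Key Lemma, whose vertical half --- that (a), (b), (c) imply $\tau(\varphi)^V=0$ pointwise --- is false. Counterexample: let $M=P\times F$, $N=Q\times G$ with product metrics and $\rho,\sigma$ the projections (isoparametric, totally geodesic fibres), and let $\varphi=(\bar\varphi,\psi)$ with $\bar\varphi:P\to Q$ harmonic and $\psi:F\to G$ a Riemannian submersion with non-minimal fibres, e.g.\ the projection of the warped torus $\bigl(S^1\times S^1,\,d\theta^2+f(\theta)^2d\phi^2\bigr)$ onto $(S^1,d\theta^2)$, for which $\tau(\psi)=(f'/f)\,\partial_\theta\neq0$. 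Then (a), (b), (c) all hold, yet $\tau(\varphi)^V=\tau(\psi)\neq0$, so no cancellation of the vertical block can be derived from the hypotheses. (The theorem itself survives: this $\varphi$ is also not equivariantly stationary, precisely because vertical equivariant variations see $\tau(\psi)$.) The correct division of labour is the opposite of yours: the vertical part of $\tau(\varphi)$ is annihilated by testing against vertical equivariant variations, with no hypotheses needed at all, while (a)--(c) are needed exactly to show that $d\sigma(\tau(\varphi))$ is basic, so that stationarity against lifts of variations of $\bar\varphi$ --- which a priori only yields vanishing of fibrewise averages --- upgrades to pointwise vanishing of the horizontal part. With that statement in place your symmetric-criticality device is salvageable, since $V=\eta^2\tau(\varphi)$ is admissible as soon as $d\sigma(\tau(\varphi))$ is basic, with no claim about $\tau(\varphi)^V$ required; in the paper's actual application, where $\varphi$ is an inclusion of a union of fibres, $\tau(\varphi)^V=0$ does hold, but for the trivial reason that the image is saturated by $\sigma$-fibres, not by the cancellation mechanism you describe.
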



With this preparation at hand we can finally provide the reduction theorem.
Note that a similar reduction theorem has already been stated in \cite{WangIso}.
Since we could not follow the proof in \cite{wang2}, we provide one here.

\begin{theorem}[The reduction theorem]
	Let $f: M \to N$ be an isoparametric map with connected compact fibres and image $N$. Let $N^o$ be the regular part of $N$ and $N_1 \subset N^o$, a submanifold. $M_1=f^{-1}(N_1)$ is minimal
	in $M$ if and only if $N_1$ is minimal in $N^o$, with respect to the following metric
	\[ds^2=v^2ds_f^2,\]
	where $v$ is the volume function of the fibres of $f$ and $ds_f^2$ is the quotient metric.
	\label{Reduction}
\end{theorem}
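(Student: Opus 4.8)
The plan is to deduce the statement from the equivariant reduction in Theorem \ref{Thm:EllisRato} by choosing the four maps so that the inclusion of $M_1$ plays the role of the equivariant map $\varphi$. Since $N_1\subset N^o$ we have $M_1=f^{-1}(N_1)\subset f^{-1}(N^o)=M^o$, so $f$ restricts to a genuine submersion there. I would take $\varphi=\iota\colon M_1\into M^o$ to be the inclusion (with $M_1$ carrying the induced metric), $\rho=f|_{M_1}\colon M_1\to N_1$, $\sigma=f\colon M^o\to N^o$, and $\bar\varphi=\bar\iota\colon N_1\into N^o$ the inclusion, where both $N_1$ and $N^o$ carry the quotient metric $ds_f^2$, so that the square
\begin{equation*}
\begin{tikzcd}
M_1 \arrow{r}{\iota} \arrow{d}{f|_{M_1}} & M^{o} \arrow{d}{f}\\
N_1 \arrow{r}{\bar{\iota}} & N^{o}
\end{tikzcd}
\end{equation*}
commutes and $\iota$ is $(\rho,\sigma)$-equivariant.

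The decisive structural fact, used throughout, is that $M_1$ is a union of entire fibres of $f$. Hence at each $p\in M_1$ the vertical space $V_p=\ker df_p$ lies in $T_pM_1$, and since $M_1$ carries the induced metric, the horizontal space of $f|_{M_1}$ is exactly $H_p^1=H_p\cap T_pM_1$, where $H_p=V_p^\perp$ is the horizontal space of $f$. This gives hypotheses (b) and (c) of Theorem \ref{Thm:EllisRato} almost for free: (b) holds because $H_p^1\subset H_p$ by construction, and (c) holds because for $z\in N_1$ the fibre $\rho^{-1}(z)=f^{-1}(z)\cap M_1=f^{-1}(z)$ coincides with $\sigma^{-1}(\bar\iota(z))$, so $\iota$ restricts to the identity, an isometry, between fibres. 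For (a) I would verify that $\rho=f|_{M_1}$ is again isoparametric: the quantities $\norm{\nabla\rho}^2$ and $\tau(\rho)$ are constant along the fibres of $\rho$, each of which is a single fibre of the isoparametric map $f$ sitting in $M_1$ with the induced metric, so they factor through $\rho$. This verification is the technical heart of checking the hypotheses.

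Granting the hypotheses, Theorem \ref{Thm:EllisRato} yields that $\tau(\iota)=0$—that is, $M_1$ is minimal in $M$—if and only if $\iota$ is stationary among $(\rho,\sigma)$-equivariant variations. Such variations are precisely the lifts of variations of $\bar\iota\colon N_1\into N^o$, so it remains to identify the reduced first-variation problem. I would compute the volume of $M_1$ by fibre integration: since $V_p\perp H_p^1$ and $df\colon H_p^1\to T_{f(p)}N_1$ is an isometry for $ds_f^2$, the induced volume form on $M_1$ factors as the fibre volume form wedged with $f^\ast\!\left(d\mathrm{vol}_{N_1,ds_f^2}\right)$, whence
\[
V(M_1)=\int_{N_1} v\, d\mathrm{vol}_{N_1,ds_f^2}.
\]
I would then recognise this weighted functional as volume in the conformal metric $v^2ds_f^2$: for the profile curves $N_1$ relevant here one has $\int_{N_1} v\, ds_{ds_f^2}=\mathrm{Length}_{v^2ds_f^2}(N_1)$, so critical points of the reduced functional are exactly the minimal (geodesic) submanifolds of $(N^o,v^2ds_f^2)$. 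Chaining these equivalences gives the statement.

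The main obstacle is twofold. First, the careful verification of hypothesis (a)—that restricting an isoparametric map to the preimage of a submanifold of the regular base stays isoparametric—requires controlling how the fibres and their mean curvature sit inside $M_1$ versus $M$, and this is where one genuinely uses that $M_1$ is a union of complete fibres carrying the induced metric. Second, one must make the passage from \emph{stationary under equivariant variations} to \emph{critical point of} $\int_{N_1}v\,d\mathrm{vol}$ fully rigorous, in particular justifying the interchange of variation and fibre integration and confirming that the conformal factor producing the stated metric is the one dictated by $\dim N_1$; for the one-dimensional profile curves of interest this factor is exactly $v^2$, giving the metric $v^2ds_f^2$.
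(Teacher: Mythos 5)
Your proposal is correct and follows essentially the same route as the paper's own proof: the inclusion $M_1\into M$ is taken as the $(\rho,\sigma)$-equivariant map in the same commutative square, Theorem~\ref{Thm:EllisRato} is applied after checking hypotheses (a)--(c), and minimality is then translated into criticality of the fibre-integrated volume $\int_{N_1} v\, d\mathrm{vol}_{ds_f^2}$, i.e.\ minimality in the conformal metric. If anything, your write-up is more careful than the paper's (which dismisses (a)--(c) as ``immediate''), in particular in noting that the conformal factor $v^{2/\dim N_1}$ reduces to $v^2$ precisely because the $N_1$ of interest are one-dimensional profile curves.
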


\begin{proof}
	We can view $M_1$ be the pullback of $F$ over $N_1$ 
	\[M_1 = \{(x,y)\in \R^n\times N:\varphi(x)=\tilde{F}(y),\tilde{F}(y)\in N_1\}.\]
	Then we have the following commutative diagram:
	\begin{equation*}
	\begin{tikzcd}
	M_1 \arrow{r}{\varphi} \arrow{d}{f|_{M_1}} & M \arrow{d}{f}\\
	N_1 \arrow{r}{\bar{\varphi}} & N
	\end{tikzcd}
	\end{equation*}

	where $\varphi$ is the inclusion map. We can apply Theorem \ref{Thm:EllisRato} since:
	\begin{itemize}
		\item[(a)] We immediately have that $f$ and $f|_M$ are isoparametric.\\
		\item[(b)] $d\varphi(T^HM)\subset T^HN$ is immediate since $\varphi$ is an inclusion and the  $f|_{M_1}$ is a restriction of $f$. \\
		\item[(c)] Follows immediately since $\varphi$ is an inclusion.
	\end{itemize}
	We conclude that $M_1$ is minimal if and only if it is stationary with respect to $f$-invariant variations, i.e. variations through fibres of $f$. If we let $v(x):\R^n\to\R$ denote the volume of a fibre then the volume of $M_1$ equals the volume of $N_1$ with respect to the metric 
	\[v^2ds_{f^2}\]
	on $N^o$. Thus  $\varphi:M_1\into M$ is minimal exactly when $N_1$ is.
\end{proof}

\section{Derivation and Analysis of the Vector Field}
\label{analysis}
In Subsection\,\ref{derivation}, we reduce the problem of finding $F$-invariant hypersurface in $\R^n$ to the study of a system of ordinary differential equations. 
In Subsection\,\ref{analysissub} we deduce the behaviour of 
the vector field which describes the dynamical system we will be studying.
We will begin in an essentially standard way. We determine the singular points of the vector field.
We then discuss the local behaviour at these points. Next, we analyse the global behaviour. A clever argument allows us to show the system has no periodic orbits. This allows us to determine the long term behaviour of all orbits. An especially important question is the behaviour of the separatrices of the vector field. 
All results stated in this section can be found in \cite{Wang}. We added those proofs which we felt might be helpful for the reader.

\subsection{Derivation of equations}
\label{derivation}
We now use the reduction theorem to make the problem tractable.

\smallskip

Recall, given a family of isoparametric hypersurfaces $\mathcal{F}$ in $\Sph^{n-1}$, a $F$-invariant hypersurface in $\R^n$ is a surface, which is a union of leaves all isometric to a dilation of hypersurface in $\mathcal{F}$. Recall that to any such $\mathcal{F}$ there is a Cartan-M\"unzner Polynomial $F$, such that $\mathcal{F}$ are the level sets of $F$ in $\Sph^{n-1}$. We would like to use this to construct a isoparametric map whose leaves are dilations of isoparametric hypersurfaces. We define the map $\tilde{F}:\R^n\to \R^2$ by
\[\widetilde{F}(x)=(F(x),\norm{x}^2).\]
That this is isoparametric, follows from the properties of Cartan-M\"unzner polynomials, see Subsection \ref{Isoparametric hypersurfaces}. We then apply the reduction theorem and conclude that 
a $F$-invariant hypersurface $M_1$ is minimal if and only if $\Gamma=\tilde{F}(M_1)$ is a geodesic in $D_g=\{re^{i\varphi}:0\leq r < \infty, 0\leq \varphi \leq \frac{\pi}{g}\}$ with the metric
\[h=(r^{n-2}v(\varphi))^2(dr^2+r^2d\varphi^2).\]

Straightforward considerations, which can be found in \cite{Wang}, show that
this problem is equivalent to providing solutions of the system
\begin{equation}
\begin{cases}
\frac{d\theta}{dt}=\sin\alpha\sin2\theta\\
\frac{d\alpha}{dt} = -m \sin\alpha\sin2\theta + 2\cos\alpha(m_1\cos^2\theta-m_2\sin^2\theta)
\end{cases}.
\label{sysfinal}
\end{equation}
This defines a vector field on the strip $[0,\frac{\pi}{2}]\times(-\infty,\infty)$.

\subsection{Analysis of the vector field}
\label{analysissub}

\begin{figure}
\begin{center}

\tikzset{every picture/.style={line width=0.75pt}} 

\begin{tikzpicture}[x=0.75pt,y=0.75pt,yscale=-1,xscale=1,scale = 0.8]

\draw    (149.34,0) -- (149.34,123) -- (150,300) ;
\draw    (510,0) -- (510,300) ;
\draw  [dash pattern={on 0.84pt off 2.51pt}]  (149.34,190) .. controls (242.28,212.13) and (246.26,211.14) .. (288.05,240) .. controls (329.84,268.86) and (400.41,269.13) .. (510,290) ;
\draw [shift={(510,290)}, rotate = 10.78] [color={rgb, 255:red, 0; green, 0; blue, 0 }  ][fill={rgb, 255:red, 0; green, 0; blue, 0 }  ][line width=0.75]      (0, 0) circle [x radius= 3.35, y radius= 3.35]   ;
\draw [shift={(149.34,190)}, rotate = 13.39] [color={rgb, 255:red, 0; green, 0; blue, 0 }  ][fill={rgb, 255:red, 0; green, 0; blue, 0 }  ][line width=0.75]      (0, 0) circle [x radius= 3.35, y radius= 3.35]   ;
\draw  [dash pattern={on 0.84pt off 2.51pt}]  (149.34,140) -- (510,140) ;
\draw [shift={(510,140)}, rotate = 0] [color={rgb, 255:red, 0; green, 0; blue, 0 }  ][fill={rgb, 255:red, 0; green, 0; blue, 0 }  ][line width=0.75]      (0, 0) circle [x radius= 3.35, y radius= 3.35]   ;
\draw [shift={(149.34,140)}, rotate = 0] [color={rgb, 255:red, 0; green, 0; blue, 0 }  ][fill={rgb, 255:red, 0; green, 0; blue, 0 }  ][line width=0.75]      (0, 0) circle [x radius= 3.35, y radius= 3.35]   ;
\draw  [dash pattern={on 0.84pt off 2.51pt}]  (149.34,90) .. controls (239.5,111.13) and (246.26,111.14) .. (288.05,140) .. controls (329.84,168.86) and (397.64,171.13) .. (510,190) ;
\draw [shift={(510,190)}, rotate = 9.54] [color={rgb, 255:red, 0; green, 0; blue, 0 }  ][fill={rgb, 255:red, 0; green, 0; blue, 0 }  ][line width=0.75]      (0, 0) circle [x radius= 3.69, y radius= 3.69]   ;
\draw [shift={(149.34,90)}, rotate = 13.19] [color={rgb, 255:red, 0; green, 0; blue, 0 }  ][fill={rgb, 255:red, 0; green, 0; blue, 0 }  ][line width=0.75]      (0, 0) circle [x radius= 3.69, y radius= 3.69]   ;
\draw  [dash pattern={on 0.84pt off 2.51pt}]  (149.34,-10) .. controls (237.42,-9.44) and (246.26,11.14) .. (288.05,40) .. controls (329.84,68.86) and (383.77,71.13) .. (510,90) ;
\draw  [dash pattern={on 0.84pt off 2.51pt}]  (149.34,240) -- (510,240) ;
\draw [shift={(510,240)}, rotate = 0] [color={rgb, 255:red, 0; green, 0; blue, 0 }  ][fill={rgb, 255:red, 0; green, 0; blue, 0 }  ][line width=0.75]      (0, 0) circle [x radius= 3.35, y radius= 3.35]   ;
\draw [shift={(149.34,240)}, rotate = 0] [color={rgb, 255:red, 0; green, 0; blue, 0 }  ][fill={rgb, 255:red, 0; green, 0; blue, 0 }  ][line width=0.75]      (0, 0) circle [x radius= 3.35, y radius= 3.35]   ;
\draw    (204.82,150) -- (204.82,132) ;
\draw [shift={(204.82,130)}, rotate = 450] [color={rgb, 255:red, 0; green, 0; blue, 0 }  ][line width=0.75]    (10.93,-3.29) .. controls (6.95,-1.4) and (3.31,-0.3) .. (0,0) .. controls (3.31,0.3) and (6.95,1.4) .. (10.93,3.29)   ;
\draw    (426.77,250) -- (426.77,232) ;
\draw [shift={(426.77,230)}, rotate = 450] [color={rgb, 255:red, 0; green, 0; blue, 0 }  ][line width=0.75]    (10.93,-3.29) .. controls (6.95,-1.4) and (3.31,-0.3) .. (0,0) .. controls (3.31,0.3) and (6.95,1.4) .. (10.93,3.29)   ;
\draw    (260.31,10) -- (206.82,10) ;
\draw [shift={(204.82,10)}, rotate = 360] [color={rgb, 255:red, 0; green, 0; blue, 0 }  ][line width=0.75]    (10.93,-3.29) .. controls (6.95,-1.4) and (3.31,-0.3) .. (0,0) .. controls (3.31,0.3) and (6.95,1.4) .. (10.93,3.29)   ;
\draw    (215.92,110) -- (269.41,110) ;
\draw [shift={(271.41,110)}, rotate = 180] [color={rgb, 255:red, 0; green, 0; blue, 0 }  ][line width=0.75]    (10.93,-3.29) .. controls (6.95,-1.4) and (3.31,-0.3) .. (0,0) .. controls (3.31,0.3) and (6.95,1.4) .. (10.93,3.29)   ;
\draw    (260.31,210) -- (206.82,210) ;
\draw [shift={(204.82,210)}, rotate = 360] [color={rgb, 255:red, 0; green, 0; blue, 0 }  ][line width=0.75]    (10.93,-3.29) .. controls (6.95,-1.4) and (3.31,-0.3) .. (0,0) .. controls (3.31,0.3) and (6.95,1.4) .. (10.93,3.29)   ;
\draw  [dash pattern={on 0.84pt off 2.51pt}]  (149.34,40) -- (510,40) ;
\draw [shift={(510,40)}, rotate = 0] [color={rgb, 255:red, 0; green, 0; blue, 0 }  ][fill={rgb, 255:red, 0; green, 0; blue, 0 }  ][line width=0.75]      (0, 0) circle [x radius= 3.35, y radius= 3.35]   ;
\draw [shift={(149.34,40)}, rotate = 0] [color={rgb, 255:red, 0; green, 0; blue, 0 }  ][fill={rgb, 255:red, 0; green, 0; blue, 0 }  ][line width=0.75]      (0, 0) circle [x radius= 3.35, y radius= 3.35]   ;
\draw    (204.82,30) -- (204.82,48) ;
\draw [shift={(204.82,50)}, rotate = 270] [color={rgb, 255:red, 0; green, 0; blue, 0 }  ][line width=0.75]    (10.93,-3.29) .. controls (6.95,-1.4) and (3.31,-0.3) .. (0,0) .. controls (3.31,0.3) and (6.95,1.4) .. (10.93,3.29)   ;
\draw    (426.77,130) -- (426.77,148) ;
\draw [shift={(426.77,150)}, rotate = 270] [color={rgb, 255:red, 0; green, 0; blue, 0 }  ][line width=0.75]    (10.93,-3.29) .. controls (6.95,-1.4) and (3.31,-0.3) .. (0,0) .. controls (3.31,0.3) and (6.95,1.4) .. (10.93,3.29)   ;
\draw    (426.77,50) -- (426.77,32) ;
\draw [shift={(426.77,30)}, rotate = 450] [color={rgb, 255:red, 0; green, 0; blue, 0 }  ][line width=0.75]    (10.93,-3.29) .. controls (6.95,-1.4) and (3.31,-0.3) .. (0,0) .. controls (3.31,0.3) and (6.95,1.4) .. (10.93,3.29)   ;
\draw    (204.82,230) -- (204.82,248) ;
\draw [shift={(204.82,250)}, rotate = 270] [color={rgb, 255:red, 0; green, 0; blue, 0 }  ][line width=0.75]    (10.93,-3.29) .. controls (6.95,-1.4) and (3.31,-0.3) .. (0,0) .. controls (3.31,0.3) and (6.95,1.4) .. (10.93,3.29)   ;
\draw    (410.12,170) -- (356.64,170) ;
\draw [shift={(354.64,170)}, rotate = 360] [color={rgb, 255:red, 0; green, 0; blue, 0 }  ][line width=0.75]    (10.93,-3.29) .. controls (6.95,-1.4) and (3.31,-0.3) .. (0,0) .. controls (3.31,0.3) and (6.95,1.4) .. (10.93,3.29)   ;
\draw    (354.64,270) -- (408.12,270) ;
\draw [shift={(410.12,270)}, rotate = 180] [color={rgb, 255:red, 0; green, 0; blue, 0 }  ][line width=0.75]    (10.93,-3.29) .. controls (6.95,-1.4) and (3.31,-0.3) .. (0,0) .. controls (3.31,0.3) and (6.95,1.4) .. (10.93,3.29)   ;
\draw    (354.64,70) -- (408.12,70) ;
\draw [shift={(410.12,70)}, rotate = 180] [color={rgb, 255:red, 0; green, 0; blue, 0 }  ][line width=0.75]    (10.93,-3.29) .. controls (6.95,-1.4) and (3.31,-0.3) .. (0,0) .. controls (3.31,0.3) and (6.95,1.4) .. (10.93,3.29)   ;
\draw    (149.34,123) ;
\draw    (288.05,140) ;
\draw [shift={(288.05,140)}, rotate = 0] [color={rgb, 255:red, 0; green, 0; blue, 0 }  ][fill={rgb, 255:red, 0; green, 0; blue, 0 }  ][line width=0.75]      (0, 0) circle [x radius= 3.35, y radius= 3.35]   ;
\draw    (288.05,40) ;
\draw [shift={(288.05,40)}, rotate = 0] [color={rgb, 255:red, 0; green, 0; blue, 0 }  ][fill={rgb, 255:red, 0; green, 0; blue, 0 }  ][line width=0.75]      (0, 0) circle [x radius= 3.35, y radius= 3.35]   ;
\draw    (288.05,240) ;
\draw [shift={(288.05,240)}, rotate = 0] [color={rgb, 255:red, 0; green, 0; blue, 0 }  ][fill={rgb, 255:red, 0; green, 0; blue, 0 }  ][line width=0.75]      (0, 0) circle [x radius= 3.35, y radius= 3.35]   ;
\draw    (150,290) ;
\draw [shift={(150,290)}, rotate = 0] [color={rgb, 255:red, 0; green, 0; blue, 0 }  ][fill={rgb, 255:red, 0; green, 0; blue, 0 }  ][line width=0.75]      (0, 0) circle [x radius= 3.35, y radius= 3.35]   ;

\draw (106.38,132) node [anchor=north west][inner sep=0.75pt]   [align=left] {0};
\draw (127,10) node [anchor=north west][inner sep=0.75pt]   [align=left] {$\displaystyle \alpha $};
\draw (487,142) node [anchor=north west][inner sep=0.75pt]   [align=left] {$\displaystyle \theta $};
\draw (91,82) node [anchor=north west][inner sep=0.75pt]   [align=left] {$\displaystyle \pi /2$};
\draw (106,32) node [anchor=north west][inner sep=0.75pt]   [align=left] {$\displaystyle \pi $};
\draw (75,180) node [anchor=north west][inner sep=0.75pt]   [align=left] {$\displaystyle -\pi /2$};
\draw (96,230) node [anchor=north west][inner sep=0.75pt]   [align=left] {$\displaystyle -\pi $};
\draw (251.31,82) node [anchor=north west][inner sep=0.75pt]  [font=\small] [align=left] {$\displaystyle \eta _{\alpha }$};
\draw (362.28,22) node [anchor=north west][inner sep=0.75pt]  [font=\small] [align=left] {$\displaystyle \eta _{\theta }$};
\draw (362.28,222) node [anchor=north west][inner sep=0.75pt]  [font=\small] [align=left] {$\displaystyle \eta _{\theta }$};
\draw (251.31,182) node [anchor=north west][inner sep=0.75pt]  [font=\small] [align=left] {$\displaystyle \eta _{\alpha }$};
\draw (362.28,122) node [anchor=north west][inner sep=0.75pt]  [font=\small] [align=left] {$\displaystyle \eta _{\theta }$};
\draw (445.51,62) node [anchor=north west][inner sep=0.75pt]  [font=\small] [align=left] {$\displaystyle \eta _{\alpha }$};
\draw (168.97,72) node [anchor=north west][inner sep=0.75pt]   [align=left] {$\displaystyle S_{1}$};
\draw (471.37,160) node [anchor=north west][inner sep=0.75pt]   [align=left] {$\displaystyle S_{3}$};
\draw (168.97,170) node [anchor=north west][inner sep=0.75pt]   [align=left] {$\displaystyle S_{2}$};
\draw (471.37,262) node [anchor=north west][inner sep=0.75pt]   [align=left] {$\displaystyle S_{4}$};
\draw (293.59,220) node [anchor=north west][inner sep=0.75pt]   [align=left] {$\displaystyle O_{+}$};
\draw (295.47,120) node [anchor=north west][inner sep=0.75pt]   [align=left] {$\displaystyle O_{-}$};
\draw (71,280) node [anchor=north west][inner sep=0.75pt]   [align=left] {$\displaystyle -3\pi /2$};

\end{tikzpicture}
	
	\caption{Zero sets of the vector field.}
	\label{Fig:Zero}
	
\end{center}	
\end{figure}
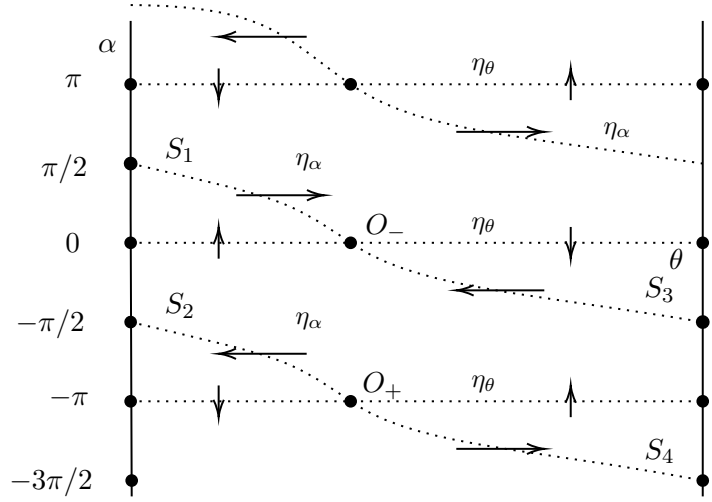

Let $\eta_\theta$ (resp. $\eta_\alpha$) denote the set along which $\frac{d\theta}{dt}=0$ (resp. $\frac{d\alpha}{dt}=0$). We then have 
\[\eta_\theta = \{(0,\alpha)\}\cup\{(\frac{\pi}{2},\alpha)\}\cup\{(\theta,k\pi)| k \in \mathbb{Z}\},\]
\[\eta_\alpha = \{(\theta,\alpha)\quad | \quad 0 = -m \sin\alpha\sin2\theta + 2\cos\alpha(m_1\cos^2\theta-m_2\sin^2\theta)\}.\]
These sets intersect at the points $(0,\frac{\pi}{2}+k\pi)$, $(\theta^*,k\pi)$ and $(\frac{\pi}{2},\frac{\pi}{2} + k\pi)$ , $k\in\mathbb{Z}$, where 
\[\theta^*= \arctan\sqrt{\frac{m_1}{m_2}}.\] 

The vector field is invariant under the transformation $(\theta,\alpha)\to (\theta,\alpha+2\pi)$, so it suffices to describe the local behaviour around representatives for each class of fixed points in $[0,\frac{\pi}{2}]\times[-3\frac{\pi}{2},\frac{\pi}{2}]$. This gives us the following points:
\[
S_1: (0,\frac{\pi}{2})	\quad				
S_2: (0,-\frac{\pi}{2})		\quad			
S_3: (\frac{\pi}{2},-\frac{\pi}{2})\quad
S_4: (\frac{\pi}{2},-\frac{3\pi}{2})\quad
O_-: (\theta^*,0)\quad
O_+: (\theta^*,-\pi),
\]
compare Figure\,\ref{Fig:Zero}.

\begin{lemma}[Local picture at $S_i$]
	\label{Si}
	The $S_i$ are saddle points. Further
	\begin{enumerate}[1{)}]
		\item 	At $S_1$, the stable manifold is the vertical segment $0\times(-\frac{\pi}{2},\frac{3\pi}{2})$.
		\item For small $\theta\in(-\epsilon,\epsilon)$, the unstable manifold $\gamma_1$ is given by $\alpha(\theta)$, such that:
		\[\alpha(0)=\frac{\pi}{2},\quad \alpha'(0)=-\frac{m}{m_1+1}.\]
	\end{enumerate}
  $S_3$ has analogous stable and unstable manifolds. $S_2$ and $S_4$ are also analogous with unstable and stable manifolds switched.
\end{lemma}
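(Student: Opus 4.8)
The plan is to linearise the system \eqref{sysfinal} at each of the four points $S_i$, read off the saddle structure from the eigenvalues, and then use the invariance of the boundary lines $\{\theta=0\}$ and $\{\theta=\tfrac\pi2\}$ to pin down the stable and unstable manifolds exactly, rather than merely up to tangency. Writing $f_1=\sin\alpha\sin2\theta$ and $f_2=-m\sin\alpha\sin2\theta+2\cos\alpha(m_1\cos^2\theta-m_2\sin^2\theta)$, a direct differentiation and evaluation at $S_1=(0,\tfrac\pi2)$ gives
\[
J_{S_1}=\begin{pmatrix} 2 & 0\\ -2m & -2m_1\end{pmatrix},
\]
which is lower triangular with eigenvalues $2$ and $-2m_1$. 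Since $m_1\geq 1$, these are real and of opposite sign, so $S_1$ is a saddle. The same routine computation at $S_2,S_3,S_4$ produces triangular Jacobians with eigenvalue pairs $\{-2,\,2m_1\}$, $\{2,\,-2m_2\}$ and $\{-2,\,2m_2\}$ respectively, each with negative product; hence all four $S_i$ are saddles.

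Next I would record the eigendirections. At every $S_i$ we have $\theta\in\{0,\tfrac\pi2\}$, so $\partial_\alpha f_1=\cos\alpha\sin2\theta=0$; consequently $J(0,1)\tp=(0,\partial_\alpha f_2)\tp$ and the vertical vector $(0,1)$ is always an eigenvector, with eigenvalue $\partial_\alpha f_2$. At $S_1$ this eigenvalue is $-2m_1<0$, so the vertical is the \emph{stable} eigendirection, while solving $(J_{S_1}-2\,\id)v=0$ gives the unstable eigenvector $\bigl(1,\,-\tfrac{m}{m_1+1}\bigr)$. To upgrade the stable direction to the full segment I would use that $\{\theta=0\}$ is flow-invariant, since $\dot\theta=\sin\alpha\sin2\theta$ vanishes there; on this line the system reduces to the scalar equation $\dot\alpha=2m_1\cos\alpha$, for which $\alpha=\tfrac\pi2$ is an attracting zero and every $\alpha_0\in(-\tfrac\pi2,\tfrac{3\pi}{2})$ flows to $\tfrac\pi2$ as $t\to\infty$. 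Thus this open segment is an invariant curve through $S_1$ tangent to the vertical stable eigendirection, and by uniqueness of the one-dimensional stable manifold it \emph{is} the stable manifold, proving part~(1).

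For the unstable manifold $\gamma_1$ I would invoke the stable manifold theorem, which supplies a unique one-dimensional invariant curve tangent at $S_1$ to $\bigl(1,\,-\tfrac{m}{m_1+1}\bigr)$. Because this tangent has nonzero $\theta$-component, $\gamma_1$ is locally a graph $\alpha(\theta)$ over $\theta\in(-\epsilon,\epsilon)$ with $\alpha(0)=\tfrac\pi2$ and $\alpha'(0)$ equal to the eigenvector slope $-\tfrac{m}{m_1+1}$, giving part~(2). The statements for $S_2,S_3,S_4$ then follow from the Jacobians already computed: at $S_2$ and $S_4$ the vertical eigenvalue $\partial_\alpha f_2$ is positive, so the invariant vertical segment becomes the unstable manifold and the transverse curve the stable one (stable and unstable switched), while at $S_3,S_4$ the evaluation at $\theta=\tfrac\pi2$ interchanges the roles of $m_1$ and $m_2$ (for instance the unstable slope at $S_3$ is $-\tfrac{m}{m_2+1}$), which is precisely the asserted analogy.

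The only genuinely non-routine step is this upgrade from the purely local conclusion of the stable manifold theorem to the global identification of the stable manifold with the \emph{entire} open segment $0\times(-\tfrac\pi2,\tfrac{3\pi}{2})$; everything else is bookkeeping. The main obstacle will be handling it cleanly, which the invariance of $\{\theta=0\}$ and the sign analysis of $\dot\alpha=2m_1\cos\alpha$ resolve, since together with uniqueness of the stable manifold they show the whole segment, and nothing beyond it inside the strip, is asymptotic to $S_1$.
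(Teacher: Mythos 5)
Your proof is correct: the Jacobian computations at the four points (lower-triangular matrices with eigenvalue pairs $\{2,-2m_1\}$, $\{-2,2m_1\}$, $\{2,-2m_2\}$, $\{-2,2m_2\}$), the eigenvector slope $-\tfrac{m}{m_1+1}$, and the upgrade from the local to the global stable manifold via invariance of $\{\theta=0\}$ and the scalar dynamics $\dot\alpha=2m_1\cos\alpha$ all check out. The paper itself states this lemma without proof (deferring to Wang's paper, as announced at the start of Section~\ref{analysis}), and your linearization-plus-invariance argument is precisely the standard route that reference takes, so your write-up simply and correctly supplies the omitted details.
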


\begin{lemma}[Local picture at $O_i$]
	$O_+$ is a source and $O_-$ is a sink. The points $O_i$ are focal if $n<4g$ and nodal if $n \geq 4g$.
	Further:
	\begin{enumerate}[1{)}]
		\item If $n<4g$, any curve $\gamma$ entering (leaving) $O_-$ ($O_+$) intersects the horizontal axis through $O_i$ infinitely many times.
		\item If $n\geq 4g$, all except 2 curves $\gamma$ enter $O_-$ with a slope $\lambda_-$. The two exceptional curves enter $O_-$ with a slope $\mu_-$. 
		Here $\lambda_-=\frac{1}{2}(-m+\sqrt{\Delta})$ and $\mu_-=\frac{1}{2}(-m-\sqrt{\Delta})$, where
		$m = m_1+m_2 + g/2$ and $\Delta = m^2-8(m_1+m_2)$.
	\end{enumerate} 
\end{lemma}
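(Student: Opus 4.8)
The plan is to analyze each $O_i$ by linearizing the vector field (\ref{sysfinal}) and reading off the eigenvalues and eigendirections of its Jacobian. Write $x=(\theta-\theta^*,\alpha)$ and recall that at $O_-=(\theta^*,0)$ one has $\sin\alpha=0$, $\cos\alpha=1$, and, by the definition of $\theta^*$, the identity $m_1\cos^2\theta^*-m_2\sin^2\theta^*=0$. Differentiating the two components of (\ref{sysfinal}) and using these relations, every term containing $\sin\alpha$ or $m_1\cos^2\theta^*-m_2\sin^2\theta^*$ drops out, and the Jacobian collapses to
\[
J_{O_-}=\sin 2\theta^*\begin{pmatrix} 0 & 1\\ -2(m_1+m_2) & -m\end{pmatrix}.
\]
Since $\sin 2\theta^*>0$, the eigenvalues are $\sin 2\theta^*$ times the roots of $\lambda^2+m\lambda+2(m_1+m_2)=0$, namely $\lambda_-=\tfrac12(-m+\sqrt{\Delta})$ and $\mu_-=\tfrac12(-m-\sqrt{\Delta})$ with $\Delta=m^2-8(m_1+m_2)$. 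Both roots have negative real part because their sum $-m$ is negative and their product $2(m_1+m_2)$ is positive, so $O_-$ is a sink. For $O_+=(\theta^*,-\pi)$ I would not redo the computation: the involution $\alpha\mapsto\alpha+\pi$ sends (\ref{sysfinal}) to its negative and interchanges $O_-$ with $O_+$, so $O_+$ is a source with the mirror-image local phase portrait.

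Next I would separate the two regimes by the sign of $\Delta$. The eigenvalues are real (nodal point) exactly when $\Delta\ge 0$ and non-real (focal point) exactly when $\Delta<0$. Substituting the explicit value of $m$ together with the dimension relation $n-2=\tfrac12(m_1+m_2)g$ turns the inequality $\Delta<0$ into a quadratic inequality in $n$ that is equivalent to $n<4g$; this short algebraic identity is what pins the focal/nodal dichotomy to the condition appearing in Theorems \ref{A} and \ref{B}. In the nodal case I would then compute the eigendirections: for the eigenvalue $\sin 2\theta^*\,\lambda$ the relation coming from the first row of $J_{O_-}$ forces the eigenvector to satisfy $\tfrac{d\alpha}{d\theta}=\lambda$, so the two eigenlines through $O_-$ have slopes exactly $\lambda_-$ and $\mu_-$. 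Because $\mu_-<\lambda_-<0$, the direction $\mu_-$ is the fast (strongly contracting) one; by the standard stable-node normal form all orbits enter $O_-$ tangent to the slow eigenline of slope $\lambda_-$, the only exceptions being the two orbits lying on the fast eigenline of slope $\mu_-$. This yields statement (2).

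For the focal case (statement (1)) linearization only gives a local spiral, so the remaining point is to upgrade this to the claim that an orbit converging to $O_-$ meets the horizontal axis $\{\alpha=0\}$ infinitely often. I would argue in polar coordinates $(R,\Phi)$ centred at $O_-$: writing the system as $\dot x=Ax+o(|x|)$ with $A$ having non-real eigenvalues, the angular velocity of the linear part, $\tfrac{x_1(Ax)_2-x_2(Ax)_1}{|x|^2}$, is a continuous function on the unit circle of constant nonzero sign, hence bounded away from $0$; the nonlinear correction is $o(1)$, so $\dot\Phi$ keeps a fixed sign and stays bounded away from $0$ for $|x|$ small. Therefore $\Phi\to\pm\infty$ along the orbit, so it crosses each of the rays $\Phi\in\{0,\pi\}$ — that is, the horizontal axis through $O_-$, where indeed $\dot\theta=0$ — infinitely many times, and the $O_+$ statement follows by the symmetry above. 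I expect the two genuinely non-mechanical steps to be (a) the algebraic reduction of $\Delta<0$ to $n<4g$, which relies on the precise dimensional normalisation of $m$, and (b) this winding estimate, since Hartman–Grobman alone provides only a homeomorphism and need not preserve intersections with the specific line $\{\alpha=0\}$.
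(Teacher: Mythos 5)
Your strategy---linearize (\ref{sysfinal}) at $O_\pm$, read off eigenvalues and eigendirections, dispatch $O_+$ via the symmetry $\alpha\mapsto\alpha+\pi$ (which indeed reverses the field and swaps $O_\pm$), handle the nodal case by the slow/fast eigendirection dichotomy and the focal case by the winding estimate on $\dot\Phi$---is the natural proof, and the paper offers no proof of its own to compare against (the lemma is quoted from Wang without argument). Your computations check out: the Jacobian at $O_-$ is $\sin 2\theta^*\bmat 0 & 1\\ -2(m_1+m_2) & -m\emat$, the characteristic polynomial is $\lambda^2+m\lambda+2(m_1+m_2)$, trace/determinant give the sink/source conclusion, the first row forces eigenvector slope $d\alpha/d\theta=\lambda$, and your two flagged refinements are both sound: the node tangency statement is the standard strong-stable-manifold result (Hartman--Grobman alone would not give it, as you say; note also that for $n\geq 4g$ one has $\Delta>0$ strictly, e.g.\ $\Delta=1/4$ for $(4,1,6)$, so the eigenvalues are genuinely distinct there), and the $\dot\Phi$ estimate correctly upgrades the local spiral to infinitely many crossings of $\{\alpha=0\}$.

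The gap is precisely in the step you deferred as a ``short algebraic identity.'' With the lemma's printed $m=m_1+m_2+g/2$, the equivalence $\Delta<0\iff n<4g$ is false: for $(4,1,2)$ one has $n=8<16$ but $m=5$, $\Delta=25-24>0$, and for $(6,2,2)$ one has $n=14<24$ but $m=7$, $\Delta=49-32>0$. The value of $m$ consistent with (\ref{sysfinal}) --- recomputed from the paper's own arc-length equation (\ref{summands}) via the time change $ds/dt=\tfrac{2}{g}\,r\sin 2\theta$ that produces $d\theta/dt=\sin\alpha\sin 2\theta$ --- is $m=\tfrac{2(n-1)}{g}=m_1+m_2+\tfrac{2}{g}$; the printed $g/2$ is a typo, harmless only when $g=2$. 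Even with the corrected $m$, $\Delta<0$ becomes $(n-1)^2<4g(n-2)$, i.e.\ $n\in\bigl(2g+1-2\sqrt{g(g-1)},\,2g+1+2\sqrt{g(g-1)}\bigr)$, and since the right endpoint is \emph{strictly} below $4g$ this is not equivalent to $n<4g$ as an inequality in a real variable: the equivalence holds only because no admissible dimension of an isoparametric triple falls in the interval $\bigl[\,2g+1+2\sqrt{g(g-1)},\,4g\bigr)$, which must be verified case by case over $g\in\{1,2,3,4,6\}$. Carrying out that check also exposes a boundary failure of the lemma itself: for $(1,1,1)$ (so $n=3<4g$) one gets $\Delta=0$, a degenerate node with no spiralling --- consistent with the catenoid, whose profile crosses the cone exactly once --- so claim 1) does not hold there. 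In short, your dynamical-systems work is complete, but step (a) as described would not go through verbatim; it needs the corrected $m$ plus an admissibility check over the possible triples, not a one-line quadratic manipulation.
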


Additionally, solution curves of the system have the following properties. These will help determine their long term behaviour.
\begin{lemma} Let $\gamma(t)=(\theta(t),\alpha(t))$ be a non-singular orbit with $0<\theta < \frac{\pi}{2}$.
	\begin{enumerate}[1{)}]
		\item Whenever $\gamma(t_0) \in \eta_{\theta }$ then $\theta(t_0)$ is a local minimum (resp. maximum) as a function of $\alpha$ if $\theta(t_0)<\theta^*$ (resp. $\theta(t_0)>\theta^*$).
	
		\item Whenever $\gamma(t_0) \in \eta_{\alpha }$ then $\alpha(t_0)$ is a local maximum (resp. minimum) as a function of $\theta$ if $\theta(t_0)<\theta^*$ (resp. $\theta(t_0)>\theta^*$).
	\end{enumerate}
	As a consequence, minima and maxima of $\theta$ (resp. $\alpha$) occur alternately along $\gamma$.
			\label{minmax}
\end{lemma}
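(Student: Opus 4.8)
The plan is to treat each extremum statement as a second-derivative test. Near a crossing of $\eta_\theta$ (resp.\ $\eta_\alpha$) that is not a fixed point, one of the two velocities is nonzero, so the orbit is locally a graph $\theta=\theta(\alpha)$ (resp.\ $\alpha=\alpha(\theta)$); the crossing is a critical point of this graph, and at that point $\frac{d^2\theta}{d\alpha^2}=\ddot\theta/\dot\alpha^2$ (resp.\ $\frac{d^2\alpha}{d\theta^2}=\ddot\alpha/\dot\theta^2$), so the type of extremum is governed purely by the sign of $\ddot\theta$ (resp.\ $\ddot\alpha$). Throughout I abbreviate $P=m_1\cos^2\theta-m_2\sin^2\theta$ and record that, since $\tan^2\theta^*=m_1/m_2$, one has $P>0$ exactly when $\theta<\theta^*$ and $P<0$ exactly when $\theta>\theta^*$; I also use $\sin2\theta>0$ on the open strip $0<\theta<\tfrac{\pi}{2}$.

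For part (1), at a crossing $\gamma(t_0)\in\eta_\theta$ with $0<\theta<\tfrac{\pi}{2}$ we have $\dot\theta=0$ and $\sin\alpha=0$, hence $\cos^2\alpha=1$ and $\dot\alpha=2\cos\alpha\,P\neq0$ (it is nonzero precisely because $\theta\neq\theta^*$, the equality $\theta=\theta^*$ together with $\alpha=k\pi$ forcing a fixed point $O_\pm$). Differentiating $\dot\theta=\sin\alpha\sin2\theta$ and using $\sin\alpha=0=\dot\theta$ gives $\ddot\theta=\cos\alpha\,\dot\alpha\,\sin2\theta=2\cos^2\alpha\,P\sin2\theta=2P\sin2\theta$, whose sign is that of $P$. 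Thus $\theta$ has a local minimum when $\theta<\theta^*$ and a local maximum when $\theta>\theta^*$, which is part (1).

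Part (2) is the computational heart. At $\gamma(t_0)\in\eta_\alpha$ with $0<\theta<\tfrac{\pi}{2}$ (and not a fixed point) one checks $\cos\alpha\neq0$ and $\dot\theta\neq0$. Differentiating the second equation of \eqref{sysfinal} and setting $\dot\alpha=0$ yields
\[
\ddot\alpha=-2\dot\theta\bigl[m\sin\alpha\cos2\theta+(m_1+m_2)\cos\alpha\sin2\theta\bigr].
\]
The defining relation of $\eta_\alpha$, namely $m\sin\alpha\sin2\theta=2\cos\alpha\,P$, is then used to eliminate the bracket: multiplying it by $\sin2\theta$ and substituting reduces it, via the identity $2P\cos2\theta+(m_1+m_2)\sin^2 2\theta=2(m_1\cos^2\theta+m_2\sin^2\theta)$, to $2\cos\alpha\,(m_1\cos^2\theta+m_2\sin^2\theta)$. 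Feeding this back together with $\dot\theta=\sin\alpha\sin2\theta$ collapses everything to $\ddot\alpha=-2\sin2\alpha\,(m_1\cos^2\theta+m_2\sin^2\theta)$. Finally the same relation gives $m\sin2\alpha\sin2\theta=4\cos^2\alpha\,P$, so $\sin2\alpha$ has the sign of $P$; hence $\ddot\alpha$ has the sign of $-P$, giving a local maximum of $\alpha$ when $\theta<\theta^*$ and a local minimum when $\theta>\theta^*$. I expect the trigonometric identity above, and the bookkeeping needed to exploit the $\eta_\alpha$ constraint, to be the only delicate point.

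For the concluding alternation statement, observe that between two consecutive crossings of $\eta_\theta$ the velocity $\dot\theta$ cannot vanish, so $\theta$ is strictly monotone there; combined with part (1), which shows that every such crossing is a genuine nondegenerate extremum rather than an inflection, this forces the extrema of $\theta$ to alternate minimum--maximum along $\gamma$. The identical argument using part (2) and $\eta_\alpha$ handles $\alpha$.
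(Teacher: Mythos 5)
Your proof is correct, but there is nothing in the paper to compare it against: this lemma is one of the results the authors quote from Wang \cite{Wang} without proof (the section explicitly says only selected proofs were added, and none follows this statement). Your argument fills that gap, and it is the natural one; I have checked the details. On $\eta_\theta$ inside the strip one has $\sin\alpha=0$, $\cos^2\alpha=1$, hence $\dot\alpha=2\cos\alpha\,P\neq0$ away from $O_\pm$ and $\ddot\theta=2P\sin2\theta$, as you say. On $\eta_\alpha$ your differentiated formula $\ddot\alpha=-2\dot\theta\left[m\sin\alpha\cos2\theta+(m_1+m_2)\cos\alpha\sin2\theta\right]$ is right, and the identity you invoke does hold: writing $c=\cos^2\theta$, $s=\sin^2\theta$,
\[
2P\cos2\theta+(m_1+m_2)\sin^2 2\theta
=2(m_1c-m_2s)(c-s)+4(m_1+m_2)cs
=2(m_1c+m_2s),
\]
which together with the nullcline relation $m\sin\alpha\sin2\theta=2\cos\alpha\,P$ gives $\ddot\alpha=-2\sin2\alpha\,(m_1\cos^2\theta+m_2\sin^2\theta)$, and the relation multiplied by $2\cos\alpha$ pins down the sign of $\sin2\alpha$ as that of $P$ (legitimate since $\cos\alpha\neq0$ on $\eta_\alpha$ in the strip). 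The nondegeneracy checks you make ($\dot\alpha\neq0$ on $\eta_\theta$, $\dot\theta\neq0$ on $\eta_\alpha$, both because a non-singular orbit cannot meet the fixed points, which are exactly $\eta_\theta\cap\eta_\alpha$) are precisely what justifies the graph representation, the formulas $d^2\theta/d\alpha^2=\ddot\theta/\dot\alpha^2$ and $d^2\alpha/d\theta^2=\ddot\alpha/\dot\theta^2$ at the critical point, and the concluding alternation argument. The only stylistic remark: since sign of $\ddot\theta$ (resp. $\ddot\alpha$) already decides the extremum type in the time parameter, the passage to the $\alpha$- (resp. $\theta$-) parametrization is immediate, exactly as you present it.
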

 Now we will prove that the vector field has no periodic orbits. This is important since it limits the possible behaviour of the system and will allow us to characterise the global behaviour of the system. We will use the monotonicity formula for minimal surfaces \cite[Theorem 6.2]{Minimal} to arrive at a contradiction. 
\begin{theorem}[No periodic orbits]
\label{noperiodic}
	The vector field has no periodic orbits.
	\label{periodic}
\end{theorem}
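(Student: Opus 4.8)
The plan is to argue by contradiction, turning a hypothetical periodic orbit of \eqref{sysfinal} into a complete minimal hypersurface in $\R^n$ that is invariant under a fixed dilation, and then to rule such a surface out with the monotonicity formula \cite[Theorem 6.2]{Minimal}. First I would reconstruct the geometry from the orbit. The lines $\theta=0$ and $\theta=\tfrac{\pi}{2}$ are invariant for \eqref{sysfinal}, since there $\dot\theta=\sin\alpha\sin2\theta=0$, and on them the flow reduces to the one--dimensional equations $\dot\alpha=2m_1\cos\alpha$ and $\dot\alpha=-2m_2\cos\alpha$, which are monotone between their zeros and hence admit no periodic orbits. Thus any periodic orbit lies in the open strip $0<\theta<\tfrac{\pi}{2}$, so by the Reduction Theorem \ref{Reduction} it produces a genuine regular minimal hypersurface $M_1=f^{-1}(\Gamma)$ avoiding the focal submanifolds, where $\Gamma$ is the associated profile curve in $D_g$. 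Writing $\theta=\tfrac{g}{2}\varphi$ and letting $\alpha$ be the angle the tangent of $\Gamma$ makes with the radial direction $\partial_r$ (an angle unchanged by the conformal factor in $h$), the radial coordinate is recovered by the quadrature $\tfrac{d}{dt}\log r=\tfrac{2}{g}\cos\alpha\,\sin2\theta$. Over one period $T$ the variables $(\theta,\alpha)$ return to their initial values, so $\varphi$ returns while $r$ is multiplied by the fixed factor $\Lambda=\exp\!\big(\tfrac{2}{g}\int_0^T\cos\alpha\,\sin2\theta\,dt\big)>0$; iterating, $\Gamma$ is the orbit of a single arc under $x\mapsto\Lambda x$, so the complete surface it generates satisfies $\Lambda\cdot M_1=M_1$.

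Next I would apply monotonicity at the origin: for the stationary varifold $M_1$ the density ratio $\Theta(\rho)=\mathrm{Vol}(M_1\cap B_\rho)/(\omega_{n-1}\rho^{\,n-1})$ is nondecreasing in $\rho$, and is constant on an interval only if $M_1$ is a cone there. If $\Lambda\neq1$, say $\Lambda>1$, the self--similarity gives $\Theta(\Lambda\rho)=\Theta(\rho)$; a nondecreasing, multiplicatively periodic function is constant, so $M_1$ is a cone. A cone forces the position vector to be tangent to $M_1$, i.e.\ $\Gamma$ radial and $\sin\alpha\equiv0$; but then \eqref{sysfinal} gives $\dot\theta\equiv0$, forcing $\theta\equiv\theta^*$ and $\alpha\in\{0,-\pi\}$, so the ``orbit'' is one of the fixed points $O_\pm$ rather than a periodic orbit. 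If instead $\Lambda=1$, the curve $\Gamma$ is closed and $M_1$ lies in an annulus $\{a\le\abs{x}\le b\}$ with $0<a<b$; then $\Theta(\rho)=0$ for $\rho<a$ while $\Theta(\rho)=\mathrm{Vol}(M_1)/(\omega_{n-1}\rho^{\,n-1})$ is strictly decreasing for $\rho>b$, contradicting monotonicity. In either case we obtain a contradiction, so no periodic orbit exists.

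I expect the main obstacle to be the reconstruction step rather than the monotonicity argument: one must make the dictionary between the reduced coordinate $\alpha$ and the ambient geometry precise enough to produce a well--defined dilation factor $\Lambda$ and to verify $\Lambda\cdot M_1=M_1$, and to match the cone condition exactly with $\sin\alpha\equiv0$. One should also check that $M_1$, being the image of an orbit defined for all $t\in\R$, is a complete, properly immersed minimal hypersurface whose only accumulation point off itself is the origin, so that it is a stationary varifold in $\R^n$ and the monotonicity formula is applicable there.
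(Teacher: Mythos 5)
Your proposal is correct, and it rests on the same pivot as the paper's proof --- a periodic orbit of \eqref{sysfinal} produces a profile curve satisfying $\Gamma=\delta_*\Gamma$, hence a $\delta$-self-similar minimal hypersurface, to which the monotonicity formula is applied --- but your endgame is genuinely different. The paper must first prove $\delta>1$: it uses an index argument (a periodic orbit has index $1$, cannot encircle the saddles $S_i$ because the lines $\theta=0$ and $\theta=\frac{\pi}{2}$ are invariant, hence encircles $O_+$ or $O_-$) together with Lemma \ref{minmax} to force $-\frac{\pi}{2}<\alpha<\frac{\pi}{2}$ along the orbit, so $\cos\alpha>0$ and $\delta=e^{\beta}>1$; it then extracts from constancy of the density ratio the pointwise identity $(n-1)c\cos\alpha(s_0)=v(\varphi(s_0))$ along the curve and gets the contradiction by comparing second logarithmic derivatives in $\alpha$ at a crossing of the $\theta$-axis, where $\log\cos\alpha$ contributes $-1$ while $\log v(\varphi(\alpha))$ contributes $H(\varphi(\alpha_0))\varphi''(\alpha_0)>0$. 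You bypass both of these steps: the case $\Lambda=1$ you kill directly (a closed profile curve gives a compact boundaryless minimal surface in an annulus, impossible by monotonicity centered at the origin, or even more cheaply by the maximum principle), and for $\Lambda\neq1$ you invoke the equality/rigidity case of the monotonicity formula (constant density ratio implies cone), after which $\sin\alpha\equiv0$ collapses the orbit to the fixed point $(\theta^*,0)$ or $(\theta^*,-\pi)$, contradicting periodicity. What each approach buys: the paper's route uses only the bare ``ratio is increasing'' statement it cites and recycles phase-plane lemmas already established, whereas yours is shorter and dispenses with the index/minmax analysis entirely, at the price of needing the rigidity case of monotonicity (standard, but strictly more than the statement the paper quotes) and of the verification you yourself flag: that $M_1\cup\{0\}$ is a stationary varifold with locally finite mass near the origin. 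That check does go through --- since $r(t-T)=\Lambda^{-1}r(t)$ the curve spirals into the origin with geometric decay, so $\int r^{n-2}v(\varphi)\,ds$ converges near $0$ and the point singularity is removable for stationarity --- and the paper glosses over the same issue when it writes down $M_\Gamma$. The factor $\frac{2}{g}$ in your quadrature for $\log r$, versus the paper's two parametrizations ($d(\log r)/dt_1=\cos\alpha$ and $dr/dt=r\sin 2\theta\cos\alpha$), is a harmless time reparametrization: only the sign of the integrand and the multiplicative periodicity $r(t+T)=\Lambda r(t)$ enter the argument.
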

\begin{proof}
	Let $\gamma=(\theta(t),\alpha(t))$ be a periodic orbit. Fixing some initial $r$ value we associate to it a geodesic $\Gamma=(r(s),\varphi(s),\alpha(s))$ in $D_g$. Here, $r(s)$ solves 
	\[\frac{d(\log(r))}{dt_1}=\cos(\alpha(t_1)), \frac{dt_1}{ds}=\frac{1}{r} \text{  and  } \theta = \frac{g}{2}\varphi.\]
	For any constant $h$, the homothety \[h_*:(r,\varphi)\mapsto(hr,\varphi)\]
	preserves geodesics setwise in $D_g$. Since $\gamma$ is periodic, \[(r(T),\varphi(T),\alpha(T))=(\frac{r(T)}{r(0)} r(0),\varphi(0),\alpha(0))\]
	for some $T$. By the unqiueness of solutions and the homothety property, if $\delta=\frac{r(T)}{r(0)}>1$, we must have $\Gamma = \delta_*\Gamma$ as these are both solution curves containing the same point. 
	Let $M_\Gamma$ be a minimal surface in $\R^n$ such that $\tilde{F}(M) = \Gamma$, then $M_\Gamma$ is $\delta$-self-similar. We will apply the monotonicity formula to this surface. To show $\delta>1$, first define
	\[\beta = \int_0^T\cos(\alpha(t_1))dt_1=\int_{0}^{T}\frac{d (\log r)}{dt_1}dt_1=\log\left(\frac{r(T)}{r(0)}\right).\]
	Then
	\[ e^{\beta}=\frac{r(T)}{r(0)}=\delta.\]
	
	Now, we show some constraints on $\gamma$ which prove $\beta>0$. We know the index of $\gamma$ must be $1$. It can not go around any saddle points as the vertical lines $\theta=0$ and $\theta = \frac{\pi}{2}$ are invariant sets. The only remaining fixed points are sinks and saddles on the line $\theta = \theta^*$. We know $\gamma$ can only encircle one of them so we assume w.l.o.g that $\gamma$ encircles $O_-$. Let $\alpha_+$ and $\alpha_-$ be the maximum and minimum of $\alpha(t)$. By Lemma \ref{minmax}, at $\alpha_+$ we must have $k\pi<\alpha<k\pi+\frac{\pi}{2}$. If $\alpha_+>\pi$, it would encircle another fixed point, thus $\alpha_+ <\frac{\pi}{2}$. Similarly, $\alpha_->-\frac{\pi}{2}$. Thus $\frac{\pi}{2}<\alpha<\frac{\pi}{2}$, so $\cos\alpha(t)>0$, consequently $\beta > 0$ and thus that $\delta>1$.\\
	
	Recall that the monotonicity formula for minimal surfaces \cite[Theorem 6.2]{Minimal} states that
	for a minimal surface $M$, 
	\[\frac{\mbox{Area}(M\cap B_\rho)}{\rho^{n-1}}\]
	is increasing. For a $\delta$-self-similar minimal surface, we know $$\frac{\mbox{Area}(M\cap B_\rho)}{\rho^{n-1}} = \frac{\mbox{Area}(M\cap B_{\delta\rho)}}{(\delta\rho)^{n-1}},$$ so this fraction must be constant. Thus, we have
	\[\mbox{Area}(M_\Gamma\cap B_\rho)=c\rho^{n-1}\]
	for some constant $c$.
	More explicitly, parametrise $\Gamma(s)= (r(s),\varphi(s))$ by Euclidean arc length as in (2.6). Let $s_0(\rho)$ be the solution to $r(s)=\rho$, i.e $s_0=r^{-1}(\rho)$. Now we can rewrite our equation as
	\[c\rho^{n-1}=\int_0^{s_0(\rho)}r^{n-2}(s)v(\varphi(s))ds.\]
	Taking the $\rho$ derivative on both sides gives us
	\[(n-1)c\rho^{n-2}=\frac{ds_0}{d\rho}r^{n-2}(s_0)v(\varphi(s_0)).\]
	Cancelling the $\rho$s and applying the inverse function theorem, we have
	\[(n-1)c= \frac{1}{\frac{dr}{ds}(s_0)}v(\varphi(s_0).\]
	By (2.6) and rearranging, we get
	\[\cos(\alpha(s_0))(n-1)c=v(\varphi(s_0)).\]
	
	Since $\gamma$ encircles $O_-$, it must intersect the $\theta$-axis at some point, say $\theta(t_0) < \theta^*$. We then have
	\[\left.\frac{d^2}{d\alpha^2}\log\cos(\alpha)\right|_{\alpha(t_0)}=-1\]
	but, 
	\[\left.\frac{d^2}{d\alpha^2}\log v(\varphi(\alpha))\right|_{\alpha(t_0)}= H(\varphi(\alpha_0))\varphi''(\alpha_0)>0,\]
	since $\theta_0<\theta^*$ and $\theta_0$ is a minimum. This gives a contradiction.\\
\end{proof}

This result allows us to describe the the global behaviour of orbits. We denote by $\gamma_i$ the non-vertical unstable/stable manifold of $S_i$. These will be separatrices.
\begin{lemma}[Behaviour of the separatrices]
The curves $\gamma_i$ are seperatrices, in particular:
\begin{align*}
 &L^+(\gamma_1)=O_-\quad,\quad L^-(\gamma_1) = S_1,\\
 &L^+(\gamma_2)=S_2\quad,\quad L^-(\gamma_1) = O_+,\\
 &L^+(\gamma_3)=O_-\quad,\quad L^-(\gamma_1) = S_3,\\
 &L^+(\gamma_4)=S_4\quad,\quad L^-(\gamma_1) = O_+.\\
\end{align*}

\end{lemma}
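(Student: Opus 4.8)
The plan is to pin down the limit sets by a Poincar\'e--Bendixson argument inside a suitable trapping region, using Theorem \ref{noperiodic} to exclude periodic orbits and a linearisation at the corner saddles to exclude boundary limit sets. Half of the statement is immediate from Lemma \ref{Si}: by construction $\gamma_1,\gamma_3$ are the non-vertical \emph{unstable} manifolds of the saddles $S_1,S_3$, so $L^-(\gamma_1)=S_1$ and $L^-(\gamma_3)=S_3$; and $\gamma_2,\gamma_4$ are the non-vertical \emph{stable} manifolds of $S_2,S_4$, so $L^+(\gamma_2)=S_2$ and $L^+(\gamma_4)=S_4$. Thus the real content is the identification of the opposite limit in each case. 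I will carry out the argument for $\gamma_1$ in full and then explain how the same scheme handles $\gamma_2,\gamma_3,\gamma_4$.

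\textbf{Trapping region.} First I would fix $R_1=[0,\frac{\pi}{2}]\times[-\frac{\pi}{2},\frac{\pi}{2}]$ and check that it is positively invariant. On the vertical edges $\theta=0$ and $\theta=\frac{\pi}{2}$ one has $\sin2\theta=0$, hence $\dot\theta=0$, so these edges are invariant. On the horizontal edges $\alpha=\pm\frac{\pi}{2}$ one has $\cos\alpha=0$, so \eqref{sysfinal} gives $\dot\alpha=-m\sin\alpha\sin2\theta=\mp m\sin2\theta$, which points strictly into $R_1$ for $0<\theta<\frac{\pi}{2}$. Hence no orbit leaves $R_1$ through its boundary, and since by Lemma \ref{Si} the separatrix $\gamma_1$ enters the interior of $R_1$ from the corner $S_1$ (with initial slope $-m/(m_1+1)<0$), its forward orbit stays in the compact set $R_1$. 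The forward limit $L^+(\gamma_1)$ is therefore nonempty, compact and connected, and by the Poincar\'e--Bendixson theorem together with Theorem \ref{noperiodic} it is either a single fixed point or a cycle of fixed points joined by connecting orbits.

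\textbf{Excluding the corners (the crux).} The heart of the proof is to rule out everything except the sink $O_-$, the unique fixed point in the interior of $R_1$. I would linearise at the four corners. At $S_1$ and $S_3$ the stable manifold is the invariant vertical edge ($\theta=0$, resp. $\theta=\frac{\pi}{2}$), which $\gamma_1$ never meets since it lies in $0<\theta<\frac{\pi}{2}$; so $\gamma_1$ cannot converge to $S_1$ or $S_3$. At the remaining corners $S_2=(0,-\frac{\pi}{2})$ and $S_3'=(\frac{\pi}{2},\frac{\pi}{2})$ (the $S_2$-type saddle closing off the top right of $R_1$) the Jacobian has eigenvalues $-2$ and $2m_i$, and the stable eigendirection has slope $-m/(m_i+1)$; a direct check shows that both local branches of the stable manifold leave $R_1$ at once, one through $\{\theta<0\}$ or $\{\theta>\frac{\pi}{2}\}$ and the other through $\{\alpha<-\frac{\pi}{2}\}$ or $\{\alpha>\frac{\pi}{2}\}$. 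Hence no orbit in the interior of $R_1$ converges to $S_2$ or $S_3'$ either, so the only fixed point an interior orbit can approach is $O_-$. Finally, the only saddle connections inside $\overline{R_1}$ are the two invariant vertical edges $S_2\to S_1$ and $S_3'\to S_3$ (any interior connecting orbit would have a corner saddle as forward limit, which was just excluded), and these do not close into a cycle; this rules out the polycycle case. Therefore $L^+(\gamma_1)=O_-$, and the same argument applied to $\gamma_3$, whose unstable manifold also enters the interior of $R_1$, yields $L^+(\gamma_3)=O_-$.

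\textbf{The remaining separatrices and the main obstacle.} For $\gamma_2,\gamma_4$ I would run the identical argument in backward time inside the translated rectangle $R_2=[0,\frac{\pi}{2}]\times[-\frac{3\pi}{2},-\frac{\pi}{2}]$, whose boundary the same computation shows to be negatively invariant and whose only interior fixed point is the source $O_+$. Reversing time turns $O_+$ into a sink and exchanges the roles of the stable and unstable manifolds at the corners, so the corner analysis carries over verbatim and gives $L^-(\gamma_2)=L^-(\gamma_4)=O_+$. I expect the delicate point to be the third paragraph: excluding a boundary polycycle. Everything there hinges on the eigendirection computation showing that the stable manifolds of the off-diagonal corners $S_2,S_3'$ escape the trapping rectangle, which is precisely what makes the interior sink $O_-$ (respectively the source $O_+$) the only admissible limit.
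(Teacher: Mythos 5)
Your proof is correct, and it takes a genuinely different route from the paper's. The paper argues via the nullcline arc $\xi\subset\eta_\alpha$ joining $S_1$ to $O_-$: comparing slopes at the corner ($\alpha_\xi'(0)=-m/m_1$ against $\alpha_{\gamma_1}'(0)=-m/(m_1+1)$, using Lemma \ref{Si}) places $\gamma_1$ above $\xi$, Lemma \ref{minmax} then prevents $\gamma_1$ from crossing $\xi$ before it crosses the $\theta$-axis, and the proof splits into two cases: either $\gamma_1$ never crosses the $\theta$-axis, so $\theta$ and $\alpha$ are monotone and convergence to $O_-$ is immediate, or it does, and one traps it in the \emph{curvilinear} rectangle $A_1B_1A_2B_2$ bounded by arcs of $\gamma_1$ and $\gamma_3$ themselves plus vertical segments, finishing with Theorem \ref{noperiodic} and Poincar\'e--Bendixson. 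You instead work with the fixed rectangles $R_1$ and $R_2$, whose invariance is a one-line sign check on the edges, and replace the nullcline/monotonicity analysis by linearisation at the corner saddles, locating their stable manifolds outside the trapping region --- your eigenvalue data $-2,\,2m_i$ and stable slopes $-m/(m_i+1)$ at the $S_2$-type corners check out against the system \eqref{sysfinal} --- and you rule out polycycles because the only saddle connections in $\overline{R_1}$ are the two vertical edges, which do not close into a cycle; $\gamma_2,\gamma_4$ then follow by time reversal in $R_2$. What your version buys: it treats the focal case $n<4g$ and the nodal case $n\geq 4g$ uniformly, avoids the paper's case split and the mutual-trapping subtlety in which $\gamma_3$ is used to confine $\gamma_1$ (and vice versa), and dispatches all four separatrices by one symmetric scheme. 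What the paper's version buys: the position of $\gamma_1$ relative to $\xi$ and the curvilinear rectangle carry extra positional information about the separatrices that is implicitly reused later (the fundamental domain $G$ and the stable/unstable triple discussion), and its monotone case needs no limit-set theory at all. Two routine points you should make explicit in a write-up: since $O_-$ is a sink (Lemma on the local picture at $O_i$), $O_-\in L^+(\gamma_1)$ already forces $L^+(\gamma_1)=\{O_-\}$, which is needed before reducing the polycycle alternative to corner saddles only; and a limit set containing a vertical edge is impossible because it would contain $S_1$ (resp.\ $S_3$) and hence, by the limit-set structure theorem, the interior unstable branch at that saddle, which is $\gamma_1$ itself --- making $\gamma_1$ meet its own $\omega$-limit set and thus be periodic, contradicting Theorem \ref{noperiodic}.
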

\begin{figure}[!htb]
\label{curvi}
\begin{center}	
	
	\tikzset{every picture/.style={line width=0.75pt}} 
	
	\begin{tikzpicture}[x=0.75pt,y=0.75pt,yscale=-1,xscale=1,scale = 0.8]
	
	\draw [line width=1.5]    (210,30) -- (210,230) ;
	\draw [line width=1.5]    (610,40) -- (610,240) ;
	\draw [line width=1.5]  [dash pattern={on 1.69pt off 2.76pt}]  (209,138) -- (610,140) ;
	\draw [shift={(610,140)}, rotate = 0.29] [color={rgb, 255:red, 0; green, 0; blue, 0 }  ][fill={rgb, 255:red, 0; green, 0; blue, 0 }  ][line width=1.5]      (0, 0) circle [x radius= 4.36, y radius= 4.36]   ;
	\draw [shift={(209,138)}, rotate = 0.29] [color={rgb, 255:red, 0; green, 0; blue, 0 }  ][fill={rgb, 255:red, 0; green, 0; blue, 0 }  ][line width=1.5]      (0, 0) circle [x radius= 4.36, y radius= 4.36]   ;
	\draw [line width=1.5]  [dash pattern={on 1.69pt off 2.76pt}]  (208.35,63) .. controls (263.5,80) and (333,111) .. (365.33,138) .. controls (397.66,165) and (484.87,201.13) .. (610,220) ;
	\draw [shift={(610,220)}, rotate = 8.58] [color={rgb, 255:red, 0; green, 0; blue, 0 }  ][fill={rgb, 255:red, 0; green, 0; blue, 0 }  ][line width=1.5]      (0, 0) circle [x radius= 4.79, y radius= 4.79]   ;
	\draw [shift={(208.35,63)}, rotate = 17.13] [color={rgb, 255:red, 0; green, 0; blue, 0 }  ][fill={rgb, 255:red, 0; green, 0; blue, 0 }  ][line width=1.5]      (0, 0) circle [x radius= 4.79, y radius= 4.79]   ;
	\draw [line width=1.5]    (208.35,96) ;
	\draw [line width=1.5]    (370,140) ;
	\draw [shift={(370,140)}, rotate = 0] [color={rgb, 255:red, 0; green, 0; blue, 0 }  ][fill={rgb, 255:red, 0; green, 0; blue, 0 }  ][line width=1.5]      (0, 0) circle [x radius= 4.36, y radius= 4.36]   ;
	\draw    (208.35,63) .. controls (321.5,63) and (390.5,114) .. (400,140) ;
	\draw [shift={(400,140)}, rotate = 69.93] [color={rgb, 255:red, 0; green, 0; blue, 0 }  ][fill={rgb, 255:red, 0; green, 0; blue, 0 }  ][line width=0.75]      (0, 0) circle [x radius= 3.35, y radius= 3.35]   ;
	\draw    (400,140) -- (400,190) ;
	\draw [shift={(400,190)}, rotate = 90] [color={rgb, 255:red, 0; green, 0; blue, 0 }  ][fill={rgb, 255:red, 0; green, 0; blue, 0 }  ][line width=0.75]      (0, 0) circle [x radius= 3.35, y radius= 3.35]   ;
	\draw    (310,140) .. controls (311.5,173) and (361.5,196) .. (610,220) ;
	\draw    (310,80) -- (310,140) ;
	\draw [shift={(310,140)}, rotate = 90] [color={rgb, 255:red, 0; green, 0; blue, 0 }  ][fill={rgb, 255:red, 0; green, 0; blue, 0 }  ][line width=0.75]      (0, 0) circle [x radius= 3.35, y radius= 3.35]   ;
	\draw [shift={(310,80)}, rotate = 90] [color={rgb, 255:red, 0; green, 0; blue, 0 }  ][fill={rgb, 255:red, 0; green, 0; blue, 0 }  ][line width=0.75]      (0, 0) circle [x radius= 3.35, y radius= 3.35]   ;
	\draw [line width=1.5]  [dash pattern={on 1.69pt off 2.76pt}]  (210,220) ;
	\draw [shift={(210,220)}, rotate = 0] [color={rgb, 255:red, 0; green, 0; blue, 0 }  ][fill={rgb, 255:red, 0; green, 0; blue, 0 }  ][line width=1.5]      (0, 0) circle [x radius= 4.36, y radius= 4.36]   ;
	\draw [shift={(210,220)}, rotate = 0] [color={rgb, 255:red, 0; green, 0; blue, 0 }  ][fill={rgb, 255:red, 0; green, 0; blue, 0 }  ][line width=1.5]      (0, 0) circle [x radius= 4.36, y radius= 4.36]   ;
	
	\draw (162.14,134) node [anchor=north west][inner sep=0.75pt]   [align=left] {0};
	\draw (585.07,115) node [anchor=north west][inner sep=0.75pt]   [align=left] {$\displaystyle \theta $};
	\draw (145.26,55) node [anchor=north west][inner sep=0.75pt]   [align=left] {$\displaystyle \pi /2$};
	\draw (145,210) node [anchor=north west][inner sep=0.75pt]   [align=left] {$\displaystyle -\pi /2$};
	\draw (252,82) node [anchor=north west][inner sep=0.75pt]  [font=\small] [align=left] {$\displaystyle \eta _{1}$};
	\draw (211,43) node [anchor=north west][inner sep=0.75pt]   [align=left] {$\displaystyle S_{1}$};
	\draw (591,192) node [anchor=north west][inner sep=0.75pt]   [align=left] {$\displaystyle S_{3}$};
	\draw (365,120) node [anchor=north west][inner sep=0.75pt]   [align=left] {$\displaystyle O_{-}$};
	\draw (401,120) node [anchor=north west][inner sep=0.75pt]   [align=left] {$\displaystyle A_{1}$};
	\draw (256.12,48) node [anchor=north west][inner sep=0.75pt]   [align=left] {$\displaystyle \gamma _{1}$};
	\draw (402,193) node [anchor=north west][inner sep=0.75pt]   [align=left] {$\displaystyle B_{1}$};
	\draw (308.5,71) node [anchor=south west] [inner sep=0.75pt]   [align=left] {$\displaystyle B_{2}$};
	\draw (287,140) node [anchor=north west][inner sep=0.75pt]   [align=left] {$\displaystyle A_{2}$};
	\draw (480,212) node [anchor=north west][inner sep=0.75pt]   [align=left] {$\displaystyle \gamma _{ \begin{array}{{>{\displaystyle}l}}
			3\\
			\end{array}}$};

	\end{tikzpicture}
	\caption[Lemma 7]{The curvilinear rectangle $A_1B_1A_2B_2$.}
\end{center}
\end{figure}
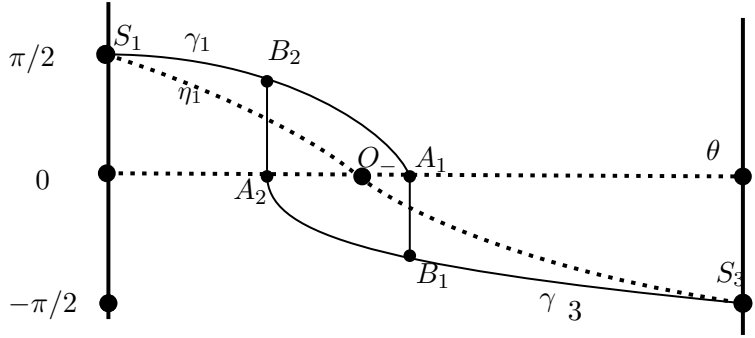

\begin{proof} We prove this for $\gamma_1=(\theta_\gamma(t),\alpha_\gamma(t))$, the other proofs are analogous.
	We denote by $\xi$ the portion of $\eta_{\alpha }$ connecting $S_1$ and $O_-$ . This can be written as a function of $\theta$ for $0<\theta<\frac{\pi}{2}$ as
	\[\alpha_\xi(\theta)=\arctan(\frac{h(\theta)}{m}).\]
	In the limit, we have	
	\[ \alpha_\xi'(0)=-\frac{m}{m_1}.\]
	From \ref{Si} we have that 
	\[\alpha_\gamma(0)= \frac{-m}{m_1+1}> \alpha_\xi(0).\]
	Thus $\gamma_1$ lies above $\xi$ initially. Further, $\alpha_\gamma$ is decreasing and $\theta_\gamma$ is decreasing. Recall that $\alpha$ has a local maximum for any curve crossing $\xi$. Thus $\gamma$ can not cross it before crossing the $\theta$ axis. We now have 2 cases:\\ 
	
	1) If $\gamma_1$ never crosses the $\theta$ axis then $\alpha$ decreases montonically and $\theta$ increases monotonically, thus $L^+(\gamma_1)=O_-$ is the limit point.\\
	
	2) If $\gamma_1$ crosses the $\theta$ axis, call this point $A_1$, draw a verticle line down to $\gamma_3$ and call this $B_1$. Repeat the same argument for $\gamma_3$ and call the intersection $A_2$ and $B_2$. This gives a curvilinear rectangle, see Figure\,\ref{curvi}. $\gamma_1$ can not exit this rectangle and the only limit point inside this rectangle is $O_-$. Since our system has no periodic orbits we must have $L^+(\gamma_1)=O_-$ by the Poincare-Bendixon theorem.
\end{proof}

\begin{figure}[!h]
\label{fundamental}
	\begin{center}

		
		\tikzset{
			pattern size/.store in=\mcSize, 
			pattern size = 5pt,
			pattern thickness/.store in=\mcThickness, 
			pattern thickness = 0.3pt,
			pattern radius/.store in=\mcRadius, 
			pattern radius = 1pt}
		\makeatletter
		\pgfutil@ifundefined{pgf@pattern@name@_pml4lgoy8}{
			\pgfdeclarepatternformonly[\mcThickness,\mcSize]{_pml4lgoy8}
			{\pgfqpoint{0pt}{0pt}}
			{\pgfpoint{\mcSize+\mcThickness}{\mcSize+\mcThickness}}
			{\pgfpoint{\mcSize}{\mcSize}}
			{
				\pgfsetcolor{\tikz@pattern@color}
				\pgfsetlinewidth{\mcThickness}
				\pgfpathmoveto{\pgfqpoint{0pt}{0pt}}
				\pgfpathlineto{\pgfpoint{\mcSize+\mcThickness}{\mcSize+\mcThickness}}
				\pgfusepath{stroke}
		}}
		\makeatother
		\tikzset{every picture/.style={line width=0.75pt}} 
		
		\begin{tikzpicture}[x=0.75pt,y=0.75pt,yscale=-1,xscale=1,scale=0.8]
		
		\draw [line width=1.5]    (175.3,31) -- (175.3,114) -- (172.6,281) ;
		\draw [line width=1.5]    (586.6,31) -- (586.6,281) ;
		\draw [line width=1.5]  [dash pattern={on 1.69pt off 2.76pt}]  (175.3,131) -- (586.6,131) ;
		\draw [shift={(175.3,131)}, rotate = 0] [color={rgb, 255:red, 0; green, 0; blue, 0 }  ][fill={rgb, 255:red, 0; green, 0; blue, 0 }  ][line width=1.5]      (0, 0) circle [x radius= 4.36, y radius= 4.36]   ;
		\draw [line width=1.5]  [dash pattern={on 1.69pt off 2.76pt}]  (175.3,231) -- (586.6,231) ;
		\draw [line width=1.5]  [dash pattern={on 1.69pt off 2.76pt}]  (175.3,31) -- (586.6,31) ;
		\draw [shift={(586.6,31)}, rotate = 0] [color={rgb, 255:red, 0; green, 0; blue, 0 }  ][fill={rgb, 255:red, 0; green, 0; blue, 0 }  ][line width=1.5]      (0, 0) circle [x radius= 4.36, y radius= 4.36]   ;
		\draw [shift={(175.3,31)}, rotate = 0] [color={rgb, 255:red, 0; green, 0; blue, 0 }  ][fill={rgb, 255:red, 0; green, 0; blue, 0 }  ][line width=1.5]      (0, 0) circle [x radius= 4.36, y radius= 4.36]   ;
		\draw [line width=1.5]    (175.3,114) ;
		\draw [line width=1.5]    (333.49,131) ;
		\draw [shift={(333.49,131)}, rotate = 0] [color={rgb, 255:red, 0; green, 0; blue, 0 }  ][fill={rgb, 255:red, 0; green, 0; blue, 0 }  ][line width=1.5]      (0, 0) circle [x radius= 4.36, y radius= 4.36]   ;
		\draw [line width=1.5]    (333.49,231) ;
		\draw [shift={(333.49,231)}, rotate = 0] [color={rgb, 255:red, 0; green, 0; blue, 0 }  ][fill={rgb, 255:red, 0; green, 0; blue, 0 }  ][line width=1.5]      (0, 0) circle [x radius= 4.36, y radius= 4.36]   ;
		\draw [line width=1.5]    (174.33,181.33) ;
		\draw [shift={(174.33,181.33)}, rotate = 0] [color={rgb, 255:red, 0; green, 0; blue, 0 }  ][fill={rgb, 255:red, 0; green, 0; blue, 0 }  ][line width=1.5]      (0, 0) circle [x radius= 4.36, y radius= 4.36]   ;
		\draw  [pattern=_pml4lgoy8,pattern size=37.5pt,pattern thickness=0.75pt,pattern radius=0pt, pattern color={rgb, 255:red, 0; green, 0; blue, 0}][line width=1.5]  (174.82,81.22) .. controls (174.47,68.97) and (313,108.8) .. (335.71,131.22) .. controls (358.42,153.64) and (587.4,186.4) .. (587.31,181.22) .. controls (587.22,176.04) and (586.76,257.26) .. (586.63,276.86) .. controls (586.5,296.47) and (586.6,281.58) .. (586.6,281) .. controls (586.6,280.42) and (366.2,257.22) .. (334.2,231.22) .. controls (302.21,205.22) and (173.86,193.83) .. (174.33,181.33) .. controls (174.81,168.83) and (174.51,155.97) .. (174.94,140.22) .. controls (175.37,124.47) and (175.3,123.72) .. (175.3,114) .. controls (175.3,104.28) and (175.16,93.47) .. (174.82,81.22) -- cycle ;
		\draw [line width=1.5]    (586.6,281) ;
		\draw [shift={(586.6,281)}, rotate = 0] [color={rgb, 255:red, 0; green, 0; blue, 0 }  ][fill={rgb, 255:red, 0; green, 0; blue, 0 }  ][line width=1.5]      (0, 0) circle [x radius= 4.36, y radius= 4.36]   ;
		\draw [line width=1.5]    (174.82,81.22) ;
		\draw [shift={(174.82,81.22)}, rotate = 0] [color={rgb, 255:red, 0; green, 0; blue, 0 }  ][fill={rgb, 255:red, 0; green, 0; blue, 0 }  ][line width=1.5]      (0, 0) circle [x radius= 4.36, y radius= 4.36]   ;
		\draw [line width=1.5]    (587.31,181.22) ;
		\draw [shift={(587.31,181.22)}, rotate = 0] [color={rgb, 255:red, 0; green, 0; blue, 0 }  ][fill={rgb, 255:red, 0; green, 0; blue, 0 }  ][line width=1.5]      (0, 0) circle [x radius= 4.36, y radius= 4.36]   ;
		
		\draw (123,123) node [anchor=north west][inner sep=0.75pt]   [align=left] {0};
		\draw (143.2,3) node [anchor=north west][inner sep=0.75pt]   [align=left] {$\displaystyle \alpha $};
		\draw (539.47,133) node [anchor=north west][inner sep=0.75pt]   [align=left] {$\displaystyle \theta $};
		\draw (101,73) node [anchor=north west][inner sep=0.75pt]   [align=left] {$\displaystyle \pi /2$};
		\draw (121,23) node [anchor=north west][inner sep=0.75pt]   [align=left] {$\displaystyle \pi $};
		\draw (90,171) node [anchor=north west][inner sep=0.75pt]   [align=left] {$\displaystyle -\pi /2$};
		\draw (111,221) node [anchor=north west][inner sep=0.75pt]   [align=left] {$\displaystyle -\pi $};
		\draw (155.36,62.33) node [anchor=north west][inner sep=0.75pt]   [align=left] {$\displaystyle S_{1}$};
		\draw (558.88,156) node [anchor=north west][inner sep=0.75pt]   [align=left] {$\displaystyle S_{3}$};
		\draw (154.02,163.33) node [anchor=north west][inner sep=0.75pt]   [align=left] {$\displaystyle S_{2}$};
		\draw (566.88,257.33) node [anchor=north west][inner sep=0.75pt]   [align=left] {$\displaystyle S_{4}$};
		\draw (341.56,211) node [anchor=north west][inner sep=0.75pt]   [align=left] {$\displaystyle O_{+}$};
		\draw (288.2,83) node [anchor=north west][inner sep=0.75pt]   [align=left] {$\displaystyle \gamma _{1}$};
		\draw (490,143) node [anchor=north west][inner sep=0.75pt]   [align=left] {$\displaystyle \gamma _{2}$};
		\draw (234,177) node [anchor=north west][inner sep=0.75pt]   [align=left] {$\displaystyle \gamma _{3}$};
		\draw (474.87,269) node [anchor=north west][inner sep=0.75pt]   [align=left] {$\displaystyle \gamma _{4}$};
		\draw (82,271) node [anchor=north west][inner sep=0.75pt]   [align=left] {$\displaystyle -3\pi /2$};
		\draw (331.67,165.33) node [anchor=north west][inner sep=0.75pt]  [font=\LARGE] [align=left] {$\displaystyle G$\\};

		\end{tikzpicture}
		\caption{The Fundamental Region $G$.}
	\end{center}
\end{figure}
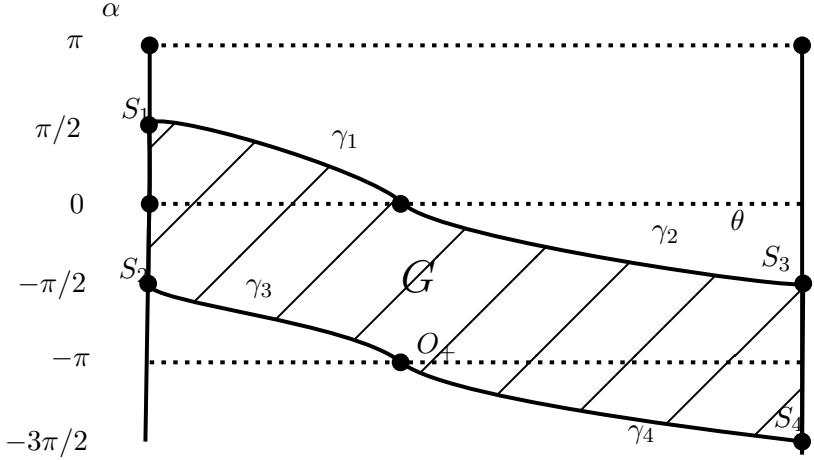
We can now define a fundamental domain $G$, bounded by the curves $\gamma_1$,$\gamma_2$,$\gamma_3$,$\gamma_4$ and the $\theta = 0$ and $\theta$=$\frac{\pi}{2}$ axis, see Figure\,\ref{fundamental}. We call this a fundamental domain since any orbit in the strip $0<\theta<\frac{\pi}{2}$ is a translate of an orbit in this domain by a translation $\alpha \to \alpha+k\pi$, for some $k\in \mathbb{Z}$, up to a change of direction. This justifies limiting our analysis to just this region.

\begin{lemma}[Behaviour of other curves]
	For any curve $\gamma$ not in boundary of $G$, we have 
	\[L^+(\gamma)=O_-,\quad L^-(\gamma)=O_+.\]
\end{lemma}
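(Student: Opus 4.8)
The plan is to exploit the fundamental domain $G$ together with the absence of periodic orbits and the Poincar\'e--Bendixson theorem. By the remark preceding the statement, every orbit in the open strip $0<\theta<\frac{\pi}{2}$ that does not lie in $\partial G$ is, up to the translation $\alpha\mapsto\alpha+k\pi$ and a possible reversal of time, a translate of an orbit contained in the interior of $G$; hence it suffices to treat $\gamma\subset\operatorname{int}(G)$. First I would record that $\gamma$ is trapped in $\overline G$: the boundary of $G$ is assembled from the invariant vertical segments on $\theta=0$ and $\theta=\frac{\pi}{2}$ and from the separatrices $\gamma_1,\gamma_2,\gamma_3,\gamma_4$, all of which are themselves orbits, so by uniqueness of solutions $\gamma$ can never cross $\partial G$ and remains in the compact set $\overline G$ for all time. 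Consequently $L^+(\gamma)$ and $L^-(\gamma)$ are nonempty, compact and connected.

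Next I would identify the limit sets. The only fixed points in $\overline G$ are the six corner points $S_1,\dots,S_4,O_-,O_+$, and all of them lie on $\partial G$, so the interior of $G$ contains no fixed point. Since the system has no periodic orbits (Theorem~\ref{noperiodic}), the Poincar\'e--Bendixson theorem leaves only two possibilities for $L^+(\gamma)$: a single fixed point, or a union of fixed points joined by connecting orbits lying in $\partial G$. I would then eliminate every outcome except $O_-$. Convergence to a saddle $S_i$ as $t\to+\infty$ would force $\gamma$ onto the stable manifold of $S_i$, which by Lemma~\ref{Si} and its analogues is precisely one of the boundary pieces---a vertical segment or one of the $\gamma_i$---and hence disjoint from $\operatorname{int}(G)$, so no $S_i$ can be the forward limit. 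The source $O_+$ cannot belong to $L^+(\gamma)$ at all: near a source there is a Lyapunov function strictly increasing along the flow, so a small ball about $O_+$ can only be exited and never re-entered, whence a non-stationary interior orbit meets any neighbourhood of $O_+$ at most once and cannot accumulate on $O_+$. This simultaneously rules out the only candidate graphic, the full boundary polycycle through $S_1,O_-,S_3,S_4,O_+,S_2$, because that set contains $O_+$. What remains is a single fixed point, and by elimination it must be the sink $O_-$; indeed, since $O_-$ is asymptotically stable, once $O_-\in L^+(\gamma)$ the orbit enters its basin and converges, forcing $L^+(\gamma)=O_-$.

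Finally, $L^-(\gamma)=O_+$ follows by running the identical argument for the time-reversed flow: reversing $t$ turns the sink $O_-$ into a source and the source $O_+$ into a sink, and interchanges the stable and unstable manifolds of each saddle, so the roles of $O_-$ and $O_+$ are swapped and the previous reasoning delivers $O_+$ as the backward limit. The delicate step is the exclusion of a separatrix cycle in the second paragraph: one must be certain that an interior orbit cannot accumulate on the entire polycyclic boundary of $G$, and the decisive leverage is that this boundary necessarily contains the source $O_+$, whose repelling character forbids the infinitely many returns such accumulation would require. Everything else is bookkeeping with the local pictures at the fixed points established above.
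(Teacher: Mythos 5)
The paper itself gives no proof of this lemma (it is one of the statements deferred to \cite{Wang}), so the fair in-paper benchmark is the proof of the preceding separatrix lemma, which runs on exactly the toolkit you invoke: a trapping region built from invariant boundary pieces, the no-periodic-orbits result (Theorem \ref{noperiodic}), and Poincar\'e--Bendixson. Your architecture is the natural completion of that argument and most of it is sound: the reduction to orbits in $\operatorname{int}(G)$ via the fundamental-domain symmetry (note $\alpha\mapsto\alpha+\pi$ reverses time, which is consistent with your use of it), the confinement of $\gamma$ to the compact set $\overline{G}$ by uniqueness of solutions, the exclusion of the saddles $S_i$ as singleton forward limits because their stable manifolds lie in $\partial G$ (Lemma \ref{Si} and its analogues), the exclusion of $O_+$ from $L^+(\gamma)$ by the local repelling/Lyapunov argument, and the final collapse $O_-\in L^+(\gamma)\Rightarrow L^+(\gamma)=\{O_-\}$ via asymptotic stability. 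The time-reversal step for $L^-(\gamma)$ is also correct.

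There is, however, one genuine gap: the assertion that ``the only candidate graphic is the full boundary polycycle.'' The Poincar\'e--Bendixson trichotomy you are using permits limit sets that are \emph{non-closed chains} of fixed points and connecting orbits, not only closed circuits; and indeed the boundary of $G$ carries no coherent polycycle at all (both halves of $\partial G$ are oriented chains from $O_+$ to $O_-$, since nothing leaves the sink and nothing enters the source). Since every connecting orbit in $\overline{G}$ lies in $\partial G$, the candidate chains are assembled from $\gamma_1,\dots,\gamma_4$ and the two vertical edges. Chains containing $O_+$ die by your source argument, and chains containing $O_-$ (such as $\{S_1\}\cup\gamma_1\cup\{O_-\}$) die by your basin remark --- but the two pure saddle-to-saddle chains, namely the edge $\theta=0$, $\alpha\in[-\pi/2,\pi/2]$ joining $S_2$ to $S_1$, and the edge $\theta=\pi/2$, $\alpha\in[-3\pi/2,-\pi/2]$ joining $S_4$ to $S_3$, contain neither $O_+$ nor $O_-$ and escape both of your mechanisms. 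To exclude them you need one further standard step: if a hyperbolic saddle belongs to $L^+(\gamma)$ and $L^+(\gamma)$ is not a single point, then $L^+(\gamma)$ must also contain a branch of that saddle's unstable manifold, because each close pass of $\gamma$ by the saddle ejects it along the unstable direction and limit sets are closed and invariant. For $S_1$ (resp.\ $S_3$) that branch is $\gamma_1$ (resp.\ $\gamma_3$), which forces $O_-\in L^+(\gamma)$, and your own basin argument then collapses $L^+(\gamma)$ to $\{O_-\}$, contradicting the presence of the edge. With this supplement the proof closes; without it, the case analysis is incomplete precisely where the invariant vertical segments enter.
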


We call a triple $(g,m_1,m_2)$ with $n\geq 4g$ \textit{unstable} if the curves $\gamma_1$ or $\gamma_2$ cross the line $\theta=\theta^*$. Otherwise we call the triple \textit{stable}. We classify unstable and stable triples in the next section but for now we classify all the orbit of vector fields given by stable triples.

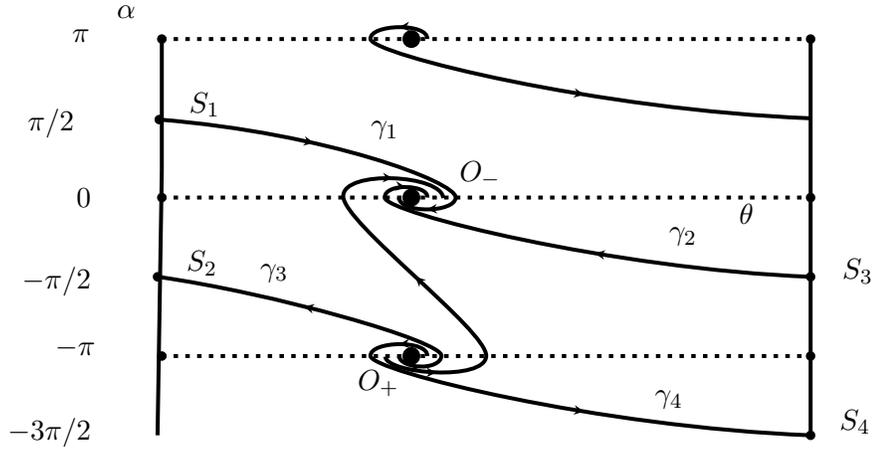
\begin{figure}[!htb]
	\begin{center}

		\tikzset{every picture/.style={line width=0.75pt}} 
		
		\begin{tikzpicture}[x=0.75pt,y=0.75pt,yscale=-1,xscale=1,scale = 0.8]
		
		\draw [line width=1.5]    (180,30) -- (180,113) -- (177.31,280) ;
		\draw [line width=1.5]    (589.02,30) -- (589.02,280) ;
		\draw [line width=1.5]  [dash pattern={on 1.69pt off 2.76pt}]  (180,130) -- (589.02,130) ;
		\draw [shift={(589.02,130)}, rotate = 0] [color={rgb, 255:red, 0; green, 0; blue, 0 }  ][fill={rgb, 255:red, 0; green, 0; blue, 0 }  ][line width=1.5]      (0, 0) circle [x radius= 1.74, y radius= 1.74]   ;
		\draw [shift={(180,130)}, rotate = 0] [color={rgb, 255:red, 0; green, 0; blue, 0 }  ][fill={rgb, 255:red, 0; green, 0; blue, 0 }  ][line width=1.5]      (0, 0) circle [x radius= 1.74, y radius= 1.74]   ;
		\draw [line width=1.5]  [dash pattern={on 1.69pt off 2.76pt}]  (180,230) -- (589.02,230) ;
		\draw [shift={(589.02,230)}, rotate = 0] [color={rgb, 255:red, 0; green, 0; blue, 0 }  ][fill={rgb, 255:red, 0; green, 0; blue, 0 }  ][line width=1.5]      (0, 0) circle [x radius= 1.74, y radius= 1.74]   ;
		\draw [shift={(180,230)}, rotate = 0] [color={rgb, 255:red, 0; green, 0; blue, 0 }  ][fill={rgb, 255:red, 0; green, 0; blue, 0 }  ][line width=1.5]      (0, 0) circle [x radius= 1.74, y radius= 1.74]   ;
		\draw [line width=1.5]  [dash pattern={on 1.69pt off 2.76pt}]  (180,30) -- (589.02,30) ;
		\draw [shift={(589.02,30)}, rotate = 0] [color={rgb, 255:red, 0; green, 0; blue, 0 }  ][fill={rgb, 255:red, 0; green, 0; blue, 0 }  ][line width=1.5]      (0, 0) circle [x radius= 1.74, y radius= 1.74]   ;
		\draw [shift={(180,30)}, rotate = 0] [color={rgb, 255:red, 0; green, 0; blue, 0 }  ][fill={rgb, 255:red, 0; green, 0; blue, 0 }  ][line width=1.5]      (0, 0) circle [x radius= 1.74, y radius= 1.74]   ;
		\draw [line width=1.5]    (180,113) ;
		\draw [line width=1.5]    (337.32,130) ;
		\draw [shift={(337.32,130)}, rotate = 0] [color={rgb, 255:red, 0; green, 0; blue, 0 }  ][fill={rgb, 255:red, 0; green, 0; blue, 0 }  ][line width=1.5]      (0, 0) circle [x radius= 4.36, y radius= 4.36]   ;
		\draw [line width=1.5]    (337.32,30) ;
		\draw [shift={(337.32,30)}, rotate = 0] [color={rgb, 255:red, 0; green, 0; blue, 0 }  ][fill={rgb, 255:red, 0; green, 0; blue, 0 }  ][line width=1.5]      (0, 0) circle [x radius= 4.36, y radius= 4.36]   ;
		\draw [line width=1.5]    (337.32,230) ;
		\draw [shift={(337.32,230)}, rotate = 0] [color={rgb, 255:red, 0; green, 0; blue, 0 }  ][fill={rgb, 255:red, 0; green, 0; blue, 0 }  ][line width=1.5]      (0, 0) circle [x radius= 4.36, y radius= 4.36]   ;
		\draw [line width=1.5]    (357.39,130.4) .. controls (356.91,115.5) and (294.26,112.17) .. (294.38,129.2) .. controls (294.5,146.23) and (385.26,213.83) .. (384.51,230) .. controls (383.76,246.17) and (321.71,241.17) .. (320.87,230.4) ;
		\draw [shift={(325.66,117.83)}, rotate = 183.81] [fill={rgb, 255:red, 0; green, 0; blue, 0 }  ][line width=0.08]  [draw opacity=0] (6.43,-3.09) -- (0,0) -- (6.43,3.09) -- (4.27,0) -- cycle    ;
		\draw [shift={(339.77,179.93)}, rotate = 42.97] [fill={rgb, 255:red, 0; green, 0; blue, 0 }  ][line width=0.08]  [draw opacity=0] (6.43,-3.09) -- (0,0) -- (6.43,3.09) -- (4.27,0) -- cycle    ;
		\draw [shift={(353.2,240.32)}, rotate = 180.21] [fill={rgb, 255:red, 0; green, 0; blue, 0 }  ][line width=0.08]  [draw opacity=0] (6.43,-3.09) -- (0,0) -- (6.43,3.09) -- (4.27,0) -- cycle    ;
		\draw [line width=1.5]    (589.02,280) .. controls (455.37,274.83) and (312.16,239.5) .. (311.56,230) .. controls (310.97,220.5) and (347.96,219.5) .. (347.37,230) ;
		\draw [shift={(446.25,265.01)}, rotate = 189.15] [fill={rgb, 255:red, 0; green, 0; blue, 0 }  ][line width=0.08]  [draw opacity=0] (6.43,-3.09) -- (0,0) -- (6.43,3.09) -- (4.27,0) -- cycle    ;
		\draw [shift={(329.74,222.5)}, rotate = 356.05] [fill={rgb, 255:red, 0; green, 0; blue, 0 }  ][line width=0.08]  [draw opacity=0] (6.43,-3.09) -- (0,0) -- (6.43,3.09) -- (4.27,0) -- cycle    ;
		\draw [shift={(589.02,280)}, rotate = 182.21] [color={rgb, 255:red, 0; green, 0; blue, 0 }  ][fill={rgb, 255:red, 0; green, 0; blue, 0 }  ][line width=1.5]      (0, 0) circle [x radius= 1.74, y radius= 1.74]   ;
		\draw [line width=1.5]    (178.38,80.8) .. controls (284.71,89.5) and (365.74,120.23) .. (365.27,130) .. controls (364.79,139.77) and (330.06,140.17) .. (329.46,130) ;
		\draw [shift={(275.86,95.63)}, rotate = 192.47] [fill={rgb, 255:red, 0; green, 0; blue, 0 }  ][line width=0.08]  [draw opacity=0] (6.43,-3.09) -- (0,0) -- (6.43,3.09) -- (4.27,0) -- cycle    ;
		\draw [shift={(347.15,137.48)}, rotate = 356.24] [fill={rgb, 255:red, 0; green, 0; blue, 0 }  ][line width=0.08]  [draw opacity=0] (6.43,-3.09) -- (0,0) -- (6.43,3.09) -- (4.27,0) -- cycle    ;
		\draw [shift={(178.38,80.8)}, rotate = 4.68] [color={rgb, 255:red, 0; green, 0; blue, 0 }  ][fill={rgb, 255:red, 0; green, 0; blue, 0 }  ][line width=1.5]      (0, 0) circle [x radius= 1.74, y radius= 1.74]   ;
		\draw [line width=1.5]    (177.31,180) .. controls (249.51,190.17) and (357.51,220.5) .. (356.32,230) .. controls (355.12,239.5) and (330.66,238.17) .. (329.46,230) ;
		\draw [shift={(269.52,198.66)}, rotate = 13.97] [fill={rgb, 255:red, 0; green, 0; blue, 0 }  ][line width=0.08]  [draw opacity=0] (6.43,-3.09) -- (0,0) -- (6.43,3.09) -- (4.27,0) -- cycle    ;
		\draw [shift={(342.89,236.63)}, rotate = 174.88] [fill={rgb, 255:red, 0; green, 0; blue, 0 }  ][line width=0.08]  [draw opacity=0] (6.43,-3.09) -- (0,0) -- (6.43,3.09) -- (4.27,0) -- cycle    ;
		\draw [shift={(177.31,180)}, rotate = 8.02] [color={rgb, 255:red, 0; green, 0; blue, 0 }  ][fill={rgb, 255:red, 0; green, 0; blue, 0 }  ][line width=1.5]      (0, 0) circle [x radius= 1.74, y radius= 1.74]   ;
		\draw [line width=1.5]    (347.37,130) .. controls (348.56,121.83) and (320.51,120.5) .. (320.51,130) .. controls (320.51,139.5) and (464.91,175.17) .. (589.02,180) ;
		\draw [shift={(589.02,180)}, rotate = 2.23] [color={rgb, 255:red, 0; green, 0; blue, 0 }  ][fill={rgb, 255:red, 0; green, 0; blue, 0 }  ][line width=1.5]      (0, 0) circle [x radius= 1.74, y radius= 1.74]   ;
		\draw [shift={(333.77,123.37)}, rotate = 185.87] [fill={rgb, 255:red, 0; green, 0; blue, 0 }  ][line width=0.08]  [draw opacity=0] (6.43,-3.09) -- (0,0) -- (6.43,3.09) -- (4.27,0) -- cycle    ;
		\draw [shift={(453.02,165.16)}, rotate = 9.7] [fill={rgb, 255:red, 0; green, 0; blue, 0 }  ][line width=0.08]  [draw opacity=0] (6.43,-3.09) -- (0,0) -- (6.43,3.09) -- (4.27,0) -- cycle    ;
		\draw [line width=1.5]    (589.02,80) .. controls (455.37,74.83) and (312.16,39.5) .. (311.56,30) .. controls (310.97,20.5) and (347.96,19.5) .. (347.37,30) ;
		\draw [shift={(446.25,65.01)}, rotate = 189.15] [fill={rgb, 255:red, 0; green, 0; blue, 0 }  ][line width=0.08]  [draw opacity=0] (6.43,-3.09) -- (0,0) -- (6.43,3.09) -- (4.27,0) -- cycle    ;
		\draw [shift={(329.74,22.5)}, rotate = 356.05] [fill={rgb, 255:red, 0; green, 0; blue, 0 }  ][line width=0.08]  [draw opacity=0] (6.43,-3.09) -- (0,0) -- (6.43,3.09) -- (4.27,0) -- cycle    ;
		
		\draw (124.37,122) node [anchor=north west][inner sep=0.75pt]   [align=left] {0};
		\draw (149.83,7) node [anchor=north west][inner sep=0.75pt]   [align=left] {$\displaystyle \alpha $};
		\draw (542.12,132) node [anchor=north west][inner sep=0.75pt]   [align=left] {$\displaystyle \theta $};
		\draw (93.68,72) node [anchor=north west][inner sep=0.75pt]   [align=left] {$\displaystyle \pi /2$};
		\draw (121.58,22) node [anchor=north west][inner sep=0.75pt]   [align=left] {$\displaystyle \pi $};
		\draw (90,172) node [anchor=north west][inner sep=0.75pt]   [align=left] {$\displaystyle -\pi /2$};
		\draw (111,217) node [anchor=north west][inner sep=0.75pt]   [align=left] {$\displaystyle -\pi $};
		\draw (195.56,62) node [anchor=north west][inner sep=0.75pt]   [align=left] {$\displaystyle S_{1}$};
		\draw (607.27,167) node [anchor=north west][inner sep=0.75pt]   [align=left] {$\displaystyle S_{3}$};
		\draw (193.77,162) node [anchor=north west][inner sep=0.75pt]   [align=left] {$\displaystyle S_{2}$};
		\draw (605.48,262) node [anchor=north west][inner sep=0.75pt]   [align=left] {$\displaystyle S_{4}$};
		\draw (301.75,237) node [anchor=north west][inner sep=0.75pt]   [align=left] {$\displaystyle O_{+}$};
		\draw (365.8,105) node [anchor=north west][inner sep=0.75pt]   [align=left] {$\displaystyle O_{-}$};
		\draw (310.12,80) node [anchor=north west][inner sep=0.75pt]   [align=left] {$\displaystyle \gamma _{1}$};
		\draw (498.08,147) node [anchor=north west][inner sep=0.75pt]   [align=left] {$\displaystyle \gamma _{2}$};
		\draw (240.31,170) node [anchor=north west][inner sep=0.75pt]   [align=left] {$\displaystyle \gamma _{3}$};
		\draw (489.13,250) node [anchor=north west][inner sep=0.75pt]   [align=left] {$\displaystyle \gamma _{4}$};
		\draw (81,267) node [anchor=north west][inner sep=0.75pt]   [align=left] {$\displaystyle -3\pi /2$};

		\end{tikzpicture}
		\caption{Typical orbits when $n<4g$.}	
	\end{center}	
\end{figure}

\begin{theorem}[Orbit Types A]
	\label{orbitA}
If $n<4g$ then any orbit $\gamma$ of the vector field in the domain $G$ with $0<\theta<\frac{\pi}{2}$ is of one of the following types:
\begin{itemize}
	\item Fixed  : $\gamma$ is one of the fixed points $O_+$ or $O_-$.
	\item Type I : A separatrix with $L^{\pm}(\gamma) = O_{\mp}$ and $L^{\mp}(\gamma) = S_i$, spiralling around $O_-$ and $O_+$.
	\item Type II : A curve with $L^+({\gamma})=O_-$ and $L^-({\gamma})=O_+$, spiralling into both.
\end{itemize}
\end{theorem}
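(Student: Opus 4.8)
The plan is to read off the trichotomy directly from the structural results already assembled, organising the orbits in $G$ according to their position relative to the boundary $\partial G$. First I would record that the only singular points lying in the open strip $0<\theta<\frac{\pi}{2}$ are $O_-$ and $O_+$: the intersection points of $\eta_\theta$ and $\eta_\alpha$ are $(0,\frac{\pi}{2}+k\pi)$, $(\frac{\pi}{2},\frac{\pi}{2}+k\pi)$ and $(\theta^*,k\pi)$, and only the last family has $\theta=\theta^*\in(0,\frac{\pi}{2})$. Inside $G$ these reduce to $O_-=(\theta^*,0)$ and $O_+=(\theta^*,-\pi)$, which accounts for the \emph{Fixed} case.

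Next I would dispose of the orbits lying on $\partial G$. By construction $\partial G$ consists of the four separatrices $\gamma_1,\dots,\gamma_4$ together with the two vertical segments on $\theta=0$ and $\theta=\frac{\pi}{2}$; the latter are ruled out by the hypothesis $0<\theta<\frac{\pi}{2}$, so the only boundary orbits meeting the open strip are the interiors of the arcs $\gamma_i$ (each of which touches $\theta\in\{0,\frac{\pi}{2}\}$ only at its saddle endpoint, since the lines $\theta=0,\frac{\pi}{2}$ are invariant). The Lemma on the behaviour of the separatrices already supplies their limit sets, e.g. $L^+(\gamma_1)=O_-$ and $L^-(\gamma_1)=S_1$, with the analogous assignments for $\gamma_2,\gamma_3,\gamma_4$; in every case one limit is a saddle $S_i$ and the other is $O_\mp$. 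Because $n<4g$, part (1) of the Lemma on the local picture at $O_i$ states that any curve entering $O_-$ (resp.\ leaving $O_+$) crosses the horizontal axis through that point infinitely often, i.e.\ spirals into $O_-$ (resp.\ out of $O_+$). Applying this at the focal endpoint of each $\gamma_i$ yields the spiralling described in the \emph{Type I} case, and collectively the four separatrices wind around both $O_-$ and $O_+$.

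For the remaining orbits — those meeting the open strip but not lying on $\partial G$ — I would invoke the Lemma on the behaviour of other curves verbatim: such a $\gamma$ satisfies $L^+(\gamma)=O_-$ and $L^-(\gamma)=O_+$. Since this orbit both enters $O_-$ and leaves $O_+$, the focal local picture (again using $n<4g$) forces it to spiral into both foci, which is exactly the \emph{Type II} description. Exhaustiveness is then formal: every orbit in the strip is a fixed point, lies on $\partial G$, or lies in the interior of $G$, and these three alternatives correspond precisely to Fixed, Type I and Type II.

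The only point demanding genuine care — and the step I expect to be the main obstacle — is confirming that these alternatives are both exhaustive and mutually exclusive, i.e.\ that an interior orbit cannot leak to $\theta\in\{0,\frac{\pi}{2}\}$, accumulate on a periodic orbit, or limit on a saddle while staying off $\partial G$. The invariance of the lines $\theta=0$ and $\theta=\frac{\pi}{2}$ confines each orbit to the open strip; the No Periodic Orbits theorem eliminates the periodic alternative in the Poincar\'e--Bendixson dichotomy; and the fact that the stable/unstable manifolds of each $S_i$ are exactly the vertical segments and the $\gamma_i$ prevents an interior orbit from limiting on a saddle except along $\partial G$. Each of these ingredients is provided by a preceding lemma, so the argument is essentially a bookkeeping synthesis rather than a fresh computation; the work lies in checking that the pieces fit together without gaps.
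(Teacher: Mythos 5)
Your proposal is correct, and it matches how the paper itself treats this statement: the paper gives no separate proof of Theorem \ref{orbitA} (it defers to Wang), positioning the theorem exactly as you do, namely as a direct synthesis of the preceding lemmas --- the fixed-point classification, the lemma on the separatrices $\gamma_i$, the lemma on the behaviour of other curves, the focal local picture at $O_\pm$ when $n<4g$, and the no-periodic-orbits theorem. Your closing paragraph (invariance of $\theta=0,\frac{\pi}{2}$, Poincar\'e--Bendixson plus absence of periodic orbits, and the identification of the stable/unstable manifolds of the $S_i$ with the vertical segments and the $\gamma_i$) supplies precisely the bookkeeping the paper leaves implicit, so there is no gap to report.
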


\begin{figure}

	\begin{center}

	\tikzset{every picture/.style={line width=0.75pt}} 
	
	\begin{tikzpicture}[x=0.75pt,y=0.75pt,yscale=-1,xscale=1,scale = 0.8]
	
	\draw [line width=1.5]    (180.3,40) -- (180.3,123) -- (177.6,290) ;
	\draw [line width=1.5]    (591.6,40) -- (591.6,290) ;
	\draw [line width=1.5]  [dash pattern={on 1.69pt off 2.76pt}]  (180.3,140) -- (591.6,140) ;
	\draw [shift={(591.6,140)}, rotate = 0] [color={rgb, 255:red, 0; green, 0; blue, 0 }  ][fill={rgb, 255:red, 0; green, 0; blue, 0 }  ][line width=1.5]      (0, 0) circle [x radius= 4.36, y radius= 4.36]   ;
	\draw [shift={(180.3,140)}, rotate = 0] [color={rgb, 255:red, 0; green, 0; blue, 0 }  ][fill={rgb, 255:red, 0; green, 0; blue, 0 }  ][line width=1.5]      (0, 0) circle [x radius= 4.36, y radius= 4.36]   ;
	\draw [line width=1.5]  [dash pattern={on 1.69pt off 2.76pt}]  (180.3,240) -- (591.6,240) ;
	\draw [shift={(591.6,240)}, rotate = 0] [color={rgb, 255:red, 0; green, 0; blue, 0 }  ][fill={rgb, 255:red, 0; green, 0; blue, 0 }  ][line width=1.5]      (0, 0) circle [x radius= 4.36, y radius= 4.36]   ;
	\draw [shift={(180.3,240)}, rotate = 0] [color={rgb, 255:red, 0; green, 0; blue, 0 }  ][fill={rgb, 255:red, 0; green, 0; blue, 0 }  ][line width=1.5]      (0, 0) circle [x radius= 4.36, y radius= 4.36]   ;
	\draw [line width=1.5]  [dash pattern={on 1.69pt off 2.76pt}]  (180.3,40) -- (591.6,40) ;
	\draw [shift={(591.6,40)}, rotate = 0] [color={rgb, 255:red, 0; green, 0; blue, 0 }  ][fill={rgb, 255:red, 0; green, 0; blue, 0 }  ][line width=1.5]      (0, 0) circle [x radius= 4.36, y radius= 4.36]   ;
	\draw [shift={(180.3,40)}, rotate = 0] [color={rgb, 255:red, 0; green, 0; blue, 0 }  ][fill={rgb, 255:red, 0; green, 0; blue, 0 }  ][line width=1.5]      (0, 0) circle [x radius= 4.36, y radius= 4.36]   ;
	\draw [line width=1.5]    (180.3,123) ;
	\draw [line width=1.5]    (338.49,140) ;
	\draw [shift={(338.49,140)}, rotate = 0] [color={rgb, 255:red, 0; green, 0; blue, 0 }  ][fill={rgb, 255:red, 0; green, 0; blue, 0 }  ][line width=1.5]      (0, 0) circle [x radius= 4.36, y radius= 4.36]   ;
	\draw [line width=1.5]    (338.49,40) ;
	\draw [shift={(338.49,40)}, rotate = 0] [color={rgb, 255:red, 0; green, 0; blue, 0 }  ][fill={rgb, 255:red, 0; green, 0; blue, 0 }  ][line width=1.5]      (0, 0) circle [x radius= 4.36, y radius= 4.36]   ;
	\draw [line width=1.5]    (338.49,240) ;
	\draw [shift={(338.49,240)}, rotate = 0] [color={rgb, 255:red, 0; green, 0; blue, 0 }  ][fill={rgb, 255:red, 0; green, 0; blue, 0 }  ][line width=1.5]      (0, 0) circle [x radius= 4.36, y radius= 4.36]   ;
	\draw [line width=1.5]    (179.11,90) .. controls (288.91,108) and (270.91,111) .. (340,140) ;
	\draw [shift={(262.64,106.84)}, rotate = 195.73] [fill={rgb, 255:red, 0; green, 0; blue, 0 }  ][line width=0.08]  [draw opacity=0] (13.4,-6.43) -- (0,0) -- (13.4,6.44) -- (8.9,0) -- cycle    ;
	\draw [shift={(179.11,90)}, rotate = 9.31] [color={rgb, 255:red, 0; green, 0; blue, 0 }  ][fill={rgb, 255:red, 0; green, 0; blue, 0 }  ][line width=1.5]      (0, 0) circle [x radius= 4.36, y radius= 4.36]   ;
	\draw [line width=1.5]    (338.49,140) .. controls (422.4,174) and (433.2,174) .. (591.6,190) ;
	\draw [shift={(591.6,190)}, rotate = 5.77] [color={rgb, 255:red, 0; green, 0; blue, 0 }  ][fill={rgb, 255:red, 0; green, 0; blue, 0 }  ][line width=1.5]      (0, 0) circle [x radius= 4.36, y radius= 4.36]   ;
	\draw [shift={(462.85,175.95)}, rotate = 8.28] [fill={rgb, 255:red, 0; green, 0; blue, 0 }  ][line width=0.08]  [draw opacity=0] (13.4,-6.43) -- (0,0) -- (13.4,6.44) -- (8.9,0) -- cycle    ;
	\draw [line width=1.5]    (338.49,240) .. controls (422.4,274) and (433.2,274) .. (591.6,290) ;
	\draw [shift={(591.6,290)}, rotate = 5.77] [color={rgb, 255:red, 0; green, 0; blue, 0 }  ][fill={rgb, 255:red, 0; green, 0; blue, 0 }  ][line width=1.5]      (0, 0) circle [x radius= 4.36, y radius= 4.36]   ;
	\draw [shift={(462.85,275.95)}, rotate = 8.28] [fill={rgb, 255:red, 0; green, 0; blue, 0 }  ][line width=0.08]  [draw opacity=0] (13.4,-6.43) -- (0,0) -- (13.4,6.44) -- (8.9,0) -- cycle    ;
	\draw [line width=1.5]    (177.6,190) .. controls (240.6,205) and (280.2,211) .. (338.49,240) ;
	\draw [shift={(259.81,209.75)}, rotate = 195.33] [fill={rgb, 255:red, 0; green, 0; blue, 0 }  ][line width=0.08]  [draw opacity=0] (13.4,-6.43) -- (0,0) -- (13.4,6.44) -- (8.9,0) -- cycle    ;
	\draw [shift={(177.6,190)}, rotate = 13.39] [color={rgb, 255:red, 0; green, 0; blue, 0 }  ][fill={rgb, 255:red, 0; green, 0; blue, 0 }  ][line width=1.5]      (0, 0) circle [x radius= 4.36, y radius= 4.36]   ;
	\draw [line width=1.5]    (338.49,140) .. controls (262.2,112) and (465.6,280) .. (338.49,240) ;
	\draw [shift={(361.31,199.51)}, rotate = 53.19] [fill={rgb, 255:red, 0; green, 0; blue, 0 }  ][line width=0.08]  [draw opacity=0] (13.4,-6.43) -- (0,0) -- (13.4,6.44) -- (8.9,0) -- cycle    ;
	\draw [line width=1.5]    (338.49,240) .. controls (229.8,168) and (219,89) .. (338.49,140) ;
	\draw [shift={(257.12,159.01)}, rotate = 423.46000000000004] [fill={rgb, 255:red, 0; green, 0; blue, 0 }  ][line width=0.08]  [draw opacity=0] (13.4,-6.43) -- (0,0) -- (13.4,6.44) -- (8.9,0) -- cycle    ;
	\draw [line width=1.5]    (338.49,140) .. controls (456.6,181) and (559.2,334) .. (338.49,240) ;
	\draw [shift={(463.81,236.75)}, rotate = 61.62] [fill={rgb, 255:red, 0; green, 0; blue, 0 }  ][line width=0.08]  [draw opacity=0] (13.4,-6.43) -- (0,0) -- (13.4,6.44) -- (8.9,0) -- cycle    ;
	\draw [line width=1.5]    (338.49,40) .. controls (422.4,74) and (433.2,74) .. (591.6,90) ;
	\draw [shift={(462.85,75.95)}, rotate = 8.28] [fill={rgb, 255:red, 0; green, 0; blue, 0 }  ][line width=0.08]  [draw opacity=0] (13.4,-6.43) -- (0,0) -- (13.4,6.44) -- (8.9,0) -- cycle    ;
	\draw [line width=1.5]    (180,290) ;
	\draw [shift={(180,290)}, rotate = 0] [color={rgb, 255:red, 0; green, 0; blue, 0 }  ][fill={rgb, 255:red, 0; green, 0; blue, 0 }  ][line width=1.5]      (0, 0) circle [x radius= 4.36, y radius= 4.36]   ;
	
	\draw (128,132) node [anchor=north west][inner sep=0.75pt]   [align=left] {0};
	\draw (148.2,12) node [anchor=north west][inner sep=0.75pt]   [align=left] {$\displaystyle \alpha $};
	\draw (544.47,142) node [anchor=north west][inner sep=0.75pt]   [align=left] {$\displaystyle \theta $};
	\draw (106,82) node [anchor=north west][inner sep=0.75pt]   [align=left] {$\displaystyle \pi /2$};
	\draw (126,32) node [anchor=north west][inner sep=0.75pt]   [align=left] {$\displaystyle \pi $};
	\draw (95,180) node [anchor=north west][inner sep=0.75pt]   [align=left] {$\displaystyle -\pi /2$};
	\draw (116,230) node [anchor=north west][inner sep=0.75pt]   [align=left] {$\displaystyle -\pi $};
	\draw (204.02,72) node [anchor=north west][inner sep=0.75pt]   [align=left] {$\displaystyle S_{1}$};
	\draw (548.88,160) node [anchor=north west][inner sep=0.75pt]   [align=left] {$\displaystyle S_{3}$};
	\draw (204.02,170) node [anchor=north west][inner sep=0.75pt]   [align=left] {$\displaystyle S_{2}$};
	\draw (548.88,262) node [anchor=north west][inner sep=0.75pt]   [align=left] {$\displaystyle S_{4}$};
	\draw (346.56,220) node [anchor=north west][inner sep=0.75pt]   [align=left] {$\displaystyle O_{+}$};
	\draw (351,20) node [anchor=north west][inner sep=0.75pt]   [align=left] {$\displaystyle O_{-}$};
	\draw (293.2,92) node [anchor=north west][inner sep=0.75pt]   [align=left] {$\displaystyle \gamma _{1}$};
	\draw (493,152) node [anchor=north west][inner sep=0.75pt]   [align=left] {$\displaystyle \gamma _{2}$};
	\draw (241,180) node [anchor=north west][inner sep=0.75pt]   [align=left] {$\displaystyle \gamma _{3}$};
	\draw (491.2,260) node [anchor=north west][inner sep=0.75pt]   [align=left] {$\displaystyle \gamma _{4}$};
	\draw (87,280) node [anchor=north west][inner sep=0.75pt]   [align=left] {$\displaystyle -3\pi /2$};

	\end{tikzpicture}	
	\caption{Typical orbits when $n\geq4g$.}
	\end{center}
\end{figure}

\begin{theorem}[Orbit Types B]
	\label{orbitB}
	If $n\geq 4g$ and $(g,m_1,m_2)$ is stable, then any orbit $\gamma$ of the vector field in $0 < \theta < \frac{\pi}{2}$  is of one of the following types:
	\begin{itemize}
		\item Fixed  : $\gamma$ is one of the fixed points $O_+$ or $O_-$.
		\item Type I : $\gamma$ is a separatrix with $L^{\pm}(\gamma) = O_{\mp}$ and $L^{\mp}(\gamma) = S_i$, decreasing/increasing monotonically to $O_\pm$.
		\item Type II :
		\begin{itemize}
		\item [a)] $\gamma$ exits $O_+$ to the right and enters $O_-$ from the left, crossing both the line $\alpha = 0$ and $\alpha = -\pi$.
		\item [b)] $\gamma$ exits $O_+$ to the left and enters $O_-$ from the left or $\gamma$ exits $O_+$ to the right and enters $O_-$ from the right, crossing $\gamma$ $\alpha = 0$ or $\alpha = -\pi$ respectively.
		\end{itemize}
	
	\end{itemize}
\end{theorem}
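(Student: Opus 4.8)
The plan is to assemble the facts already established for the system \eqref{sysfinal} so that the classification reduces to bookkeeping how a single orbit traverses the fundamental region $G$. In the open strip $0<\theta<\frac{\pi}{2}$ the only fixed points are $O_+$ and $O_-$, which accounts for the \emph{Fixed} type at once.

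For \emph{Type I}, the candidates are precisely the four separatrices $\gamma_1,\dots,\gamma_4$, whose limit sets $L^{\pm}(\gamma_i)\in\{O_{\mp},S_j\}$ are given by the lemma on the behaviour of the separatrices. It remains to promote ``converges to $O_\pm$'' to ``monotonically''. First I would use the local picture at $O_i$: because $n\geq 4g$ the points $O_\pm$ are nodal, so an orbit leaves $O_+$ and enters $O_-$ tangent to a definite eigendirection instead of spiralling. Second, the stability hypothesis says $\gamma_1,\gamma_2$ never meet $\theta=\theta^*$; this is exactly the hypothesis of the first case in the proof of the separatrix lemma, which already gives that $\alpha$ decreases and $\theta$ increases monotonically along $\gamma_1$, and analogously for $\gamma_2,\gamma_3,\gamma_4$. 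Together these yield the monotone approach asserted in Type I.

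For \emph{Type II} I would begin with the lemma on the behaviour of other curves, giving $L^-(\gamma)=O_+$ and $L^+(\gamma)=O_-$ for every orbit off $\partial G$; by Theorem~\ref{noperiodic} no further possibilities survive. The heart of the matter is the split into (a) and (b), which I would organise around one sign computation: on the vertical segment $\{\theta=\theta^*,\,-\pi<\alpha<0\}$ joining $O_+$ to $O_-$ we have $\frac{d\theta}{dt}=\sin\alpha\,\sin 2\theta^{*}<0$, so any orbit meeting this segment crosses it from right to left. This immediately forbids the ``exits-left, enters-right'' combination and leaves only the three admissible patterns: the crossing orbit ``exits $O_+$ to the right, enters $O_-$ from the left'' of (a), and the two one-sided orbits of (b). Using the separatrices $\gamma_i$ and the two exceptional $\mu_{\pm}$-orbits through $O_{\pm}$ as impassable barriers, I would confine each orbit to one of these regions and then invoke Lemma~\ref{minmax}: extrema of $\theta$ occur only on $\{\alpha=k\pi\}$, extrema of $\alpha$ only on $\eta_\alpha$, and the two kinds alternate. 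Tracking where the orbit meets the horizontal lines $\alpha=0$ and $\alpha=-\pi$ relative to $\theta^{*}$ then produces the crossing counts ``both lines'' for (a) and ``one line'' for each alternative in (b).

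I expect the casework of the previous paragraph to be the main obstacle. The delicate points are to verify that the barrier configuration is the expected one---that the separatrices $\gamma_1,\gamma_3$ entering $O_-$ really stay on opposite sides of $\theta=\theta^{*}$, which is where the stability hypothesis is used again---and to control the exceptional $\mu_{\pm}$-orbits, which separate the crossing orbits of (a) from the one-sided orbits of (b). The sign of $m_1\cos^2\theta-m_2\sin^2\theta$ across $\theta=\theta^{*}$, together with Lemma~\ref{minmax}, should close these remaining gaps.
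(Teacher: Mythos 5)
The first thing to note is that the paper does not actually prove Theorem~\ref{orbitB}: it is one of the statements quoted from \cite{Wang} without an added proof, so your proposal can only be checked against the ingredients the paper supplies (Lemma~\ref{Si}, the local lemma at $O_\pm$, Lemma~\ref{minmax}, Theorem~\ref{noperiodic}, the separatrix and other-curves lemmas, and the definition of stability). Measured against those, your skeleton is the right one. The Fixed and Type~I cases are correct as you argue them; in particular your identification of the stability hypothesis with case~1 of the separatrix-lemma proof is sound, because at $\{\alpha=0,\ \theta<\theta^*\}$ one has $\frac{d\alpha}{dt}=2(m_1\cos^2\theta-m_2\sin^2\theta)>0$, so $\gamma_1$ can only cross $\alpha=0$ after it has crossed $\theta=\theta^*$; hence ``never crosses $\theta=\theta^*$'' and ``never crosses the $\theta$-axis'' coincide for $\gamma_1$, and monotonicity follows. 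Your sign computation $\frac{d\theta}{dt}=\sin\alpha\sin 2\theta^*<0$ on the segment $\{\theta=\theta^*,\,-\pi<\alpha<0\}$ is indeed the pivot of the (a)/(b) dichotomy, and the exceptional $\mu_\pm$-orbits are the correct separators between the (a)-family and the (b)-families.

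The genuine gap is the word ``immediately'' in your claim that the sign computation forbids the exits-left/enters-right combination. That computation only excludes a left-to-right crossing of the vertical segment \emph{inside} the strip $-\pi<\alpha<0$; an orbit exiting $O_+$ to the left could a priori still enter $O_-$ from the right by leaving the strip. There are two escape routes to close. First, it could cross $\alpha=-\pi$ downward at some $\theta<\theta^*$ (the field permits this, since $\frac{d\alpha}{dt}<0$ on $\{\alpha=-\pi,\ \theta<\theta^*\}$) and recross upward at $\theta>\theta^*$; this is blocked not by any pointwise sign but by the separatrix $\gamma_2$, which in the stable case runs monotonically from $O_+$ to $S_2$ inside $-\pi<\alpha\le-\pi/2$ and, together with the invariant axis $\theta=0$, seals off the region below it. Second, it could cross $\alpha=0$ upward at $\theta<\theta^*$, pass over $O_-$ through $\{\theta=\theta^*,\ \alpha>0\}$ (where $\frac{d\theta}{dt}>0$, so rightward crossings \emph{are} allowed), and fall back across $\alpha=0$ at $\theta>\theta^*$ to enter from the right. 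This is blocked by $\gamma_1$: the region bounded by $\gamma_1$, the segment of $\alpha=0$ with $0<\theta<\theta^*$, and the axis $\theta=0$ pinches to the single point $O_-$, so every orbit crossing $\alpha=0$ upward at $\theta<\theta^*$ is trapped there and enters $O_-$ from the left. The same pocket argument is what bounds the crossing counts (no orbit can wind around $O_-$, since it would have to cross $\gamma_1\cup\{O_-\}\cup\gamma_3$) --- the nodal local picture you invoke rules out spiralling only in a neighbourhood of $O_\pm$, not a global loop. You flagged this confinement casework as the main obstacle, correctly; but the burden of the (a)/(b) split rests on these two barrier arguments rather than on the single sign computation, and with them supplied your proposal closes.
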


We now discuss how to determine for which triples $(g,m_1,m_2)$ the separatrices $\gamma_1$ and $\gamma_3$ cross the vertical line $\theta=\theta^*$. Recall, we call these triples stable. We already know they always cross whenever $n<4g$ so we only need to consider $n\geq 4g$. Further, the map $(\theta,\alpha)\to (\frac{\pi}{2}-\theta,-\alpha)$ maps the vector field generated by $(g,m_1,m_2)$ to the one generated by $(g,m_2,m_1)$ and the separatrix $\gamma_1$ to $\gamma_3$, thus it suffices to consider the behaviour of $\gamma_1$.
One proceeds by constructing curves from $S_1$ to $O_-$ which $\gamma_1$ does not cross. Since no orbit can cross from above the $\alpha=0$ line to below it before $\theta^*$ such a boundary would show that $\gamma_1$ goes directly to $(\theta^*,0)$ and does not cross the line $\theta=\theta^*$. One considers a family of potential boundaries $B_a$ and reduces the question of whether they are boundaries to a question about the positivity of a certain polynomial on an interval. 
The results are summarised in the following theorem, for details we refer to \cite{Wang}.
\begin{theorem}
	For $n\geq4g$, $\gamma_1$ only crosses the $\theta=\theta^*$ for the triples $(2,1,5)$ and $(4,1,6)$. Consequently, all triples with $n\geq4g$ and $(g,m_1,m_2)\neq (2,1,5),(4,1,6)$ are stable.
\end{theorem}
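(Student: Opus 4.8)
The plan is to show that $\gamma_1$ is trapped by an explicit \emph{barrier curve} joining $S_1$ to $O_-$, and that the existence of such a barrier is equivalent to the positivity of a polynomial on an interval; the two exceptional triples are exactly those for which no barrier in the chosen family can exist. First I would normalize the problem: by the symmetry $(\theta,\alpha)\mapsto(\tfrac{\pi}{2}-\theta,-\alpha)$, which exchanges $(g,m_1,m_2)$ with $(g,m_2,m_1)$ and $\gamma_1$ with $\gamma_3$, it suffices to treat $\gamma_1$. Dividing the two equations of \eqref{sysfinal} along an orbit in $0<\theta<\tfrac{\pi}{2}$ with $\sin\alpha>0$ gives the slope equation
\[
\frac{d\alpha}{d\theta} = -m + h(\theta)\cot\alpha, \qquad h(\theta)=m_1\cot\theta-m_2\tan\theta,
\]
where $h>0$ for $\theta<\theta^*$ and $h(\theta^*)=0$. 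On the axis $\alpha=0$ one has $\dot\alpha=2(m_1\cos^2\theta-m_2\sin^2\theta)>0$ for $\theta<\theta^*$, so no orbit crosses $\alpha=0$ downward before $\theta^*$. Hence $\gamma_1$ is \emph{stable} precisely when it limits onto $O_-$ while staying in the curvilinear triangle $R$ with vertices $(0,0)$, $S_1=(0,\tfrac{\pi}{2})$ and $O_-=(\theta^*,0)$, and it \emph{crosses} precisely when it reaches $\theta=\theta^*$ with $\alpha>0$.

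Next I would run the barrier argument. The edge $\theta=0$ is invariant and, along $\alpha=0$ with $\theta<\theta^*$, the flow points strictly into $R$; so the whole question is whether there is an upper boundary $\alpha=B_a(\theta)$ running from $S_1$ to $O_-$ inside $\{\alpha>0\}$ across which the flow points inward, i.e.
\[
B_a'(\theta)\ \geq\ -m + h(\theta)\cot\big(B_a(\theta)\big), \qquad 0<\theta<\theta^* .
\]
If this holds then $\Phi_a:=\alpha-B_a$ cannot pass from negative to positive, so $\gamma_1$ stays in $R$; since there are no periodic orbits (Theorem~\ref{noperiodic}) and the sink $O_-$ is the only available $\omega$-limit in the compact region $\overline{R}$, we get $L^+(\gamma_1)=O_-$ with $\theta\le\theta^*$ throughout, and the triple is stable. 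I would take a one-parameter family $B_a$ pinned at the endpoints, matching $B_a(0)=\tfrac{\pi}{2}$ with initial slope $\ge-\tfrac{m}{m_1+1}$ (so that $\gamma_1$, whose slope at $S_1$ is $-\tfrac{m}{m_1+1}$ by Lemma~\ref{Si}, starts below $B_a$) and $B_a(\theta^*)=0$. The delicate points are the two endpoints, where $h\cot B_a$ has the indeterminate form $\infty\cdot 0$ and the inequality degenerates exactly to these slope conditions.

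Finally, substituting $s=\tan\theta$ and clearing the trigonometric denominators turns the barrier inequality into a polynomial inequality $P_a(s)\ge 0$ on $s\in(0,\sqrt{m_1/m_2})$, whose coefficients are polynomials in $a,m_1,m_2,g$. For each admissible triple with $n\ge 4g$ I would exhibit a value of $a$ making $P_a$ nonnegative, giving stability. The main obstacle is that this must be done for three \emph{infinite} families — $g=1$ with $(1,m,m)$, $g=2$ with $m_1+m_2\ge 6$, and $g=4$ with $m_1+m_2\ge 7$ — so a uniform argument is required: I expect that for each fixed $g$ and large $m_1+m_2$ one can choose $a$ (scaling with $m_1+m_2$) so that the leading behaviour of $P_a$ dominates, leaving only finitely many small triples to check by direct computation.

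The two triples $(2,1,5)$ and $(4,1,6)$ are exactly the borderline cases $n=4g$, which explains why the estimate is tight, and they are precisely the ones for which no admissible $a$ makes $P_a\ge 0$. For these I would prove crossing by the reverse construction: a \emph{lower} barrier $\underline{B}$ which $\gamma_1$ stays above and which satisfies $\underline{B}(\theta^*)>0$, forcing $\gamma_1(\theta^*)>0$ and hence $\gamma_1$ to pass $\theta=\theta^*$; this again reduces to a polynomial inequality, now with the reversed sign. The crux of the whole argument is therefore twofold: making the positivity of $P_a$ uniform across the infinite families, and confirming that at the two exceptional triples the tight inequality genuinely reverses so that crossing does occur.
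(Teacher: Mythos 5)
Your plan is essentially the paper's own route, which the paper itself only sketches before deferring to Wang: reduce to $\gamma_1$ via the symmetry $(\theta,\alpha)\mapsto(\tfrac{\pi}{2}-\theta,-\alpha)$, trap $\gamma_1$ below a one-parameter family of barriers $B_a$ joining $S_1$ to $O_-$ (using that no orbit can cross $\alpha=0$ downward before $\theta^*$), and convert the barrier differential inequality into positivity of a polynomial on an interval; your slope equation, the observation that the inequality degenerates at the endpoints exactly to the slope conditions, and the reverse (lower) barrier for the two exceptional triples are all correct refinements of that sketch. The only slip is your parenthetical claim that $(2,1,5)$ and $(4,1,6)$ are \emph{exactly} the borderline cases $n=4g$: the triples $(1,2,2)$, $(2,2,4)$ and $(2,3,3)$ also satisfy $n=4g$ yet are stable, though nothing in your argument depends on this remark.
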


\begin{figure}

\begin{center}
\tikzset{every picture/.style={line width=0.75pt}} 

\begin{tikzpicture}[x=0.75pt,y=0.75pt,yscale=-1,xscale=1]

\draw [line width=1.5]    (161.35,42) -- (161.35,103) -- (160,190) ;
\draw [line width=1.5]    (160.68,146.5) -- (398.68,151.5) ;
\draw [shift={(160.68,146.5)}, rotate = 1.2] [color={rgb, 255:red, 0; green, 0; blue, 0 }  ][fill={rgb, 255:red, 0; green, 0; blue, 0 }  ][line width=1.5]      (0, 0) circle [x radius= 4.36, y radius= 4.36]   ;
\draw [line width=1.5]    (161.35,103) ;
\draw    (160,60) .. controls (273.5,96) and (311.5,126) .. (350,150) ;
\draw [shift={(350,150)}, rotate = 31.94] [color={rgb, 255:red, 0; green, 0; blue, 0 }  ][fill={rgb, 255:red, 0; green, 0; blue, 0 }  ][line width=0.75]      (0, 0) circle [x radius= 3.35, y radius= 3.35]   ;
\draw [shift={(160,60)}, rotate = 17.6] [color={rgb, 255:red, 0; green, 0; blue, 0 }  ][fill={rgb, 255:red, 0; green, 0; blue, 0 }  ][line width=0.75]      (0, 0) circle [x radius= 3.35, y radius= 3.35]   ;
\draw  [dash pattern={on 0.84pt off 2.51pt}]  (160,60) .. controls (251.5,71) and (328.5,109) .. (350,150) ;

\draw (138,132) node [anchor=north west][inner sep=0.75pt]   [align=left] {0};
\draw (116,52) node [anchor=north west][inner sep=0.75pt]   [align=left] {$\displaystyle \pi /2$};
\draw (161,42) node [anchor=north west][inner sep=0.75pt]   [align=left] {$\displaystyle S_{1}$};
\draw (355,122) node [anchor=north west][inner sep=0.75pt]   [align=left] {$\displaystyle O_{-}$};
\draw (230,90) node [anchor=north west][inner sep=0.75pt]   [align=left] {$\displaystyle \gamma _{1}$};
\draw (270,62) node [anchor=north west][inner sep=0.75pt]   [align=left] {$\displaystyle B_{a}$};

\end{tikzpicture}
	\caption{The barrier $B_a$.}
\end{center}

\end{figure}
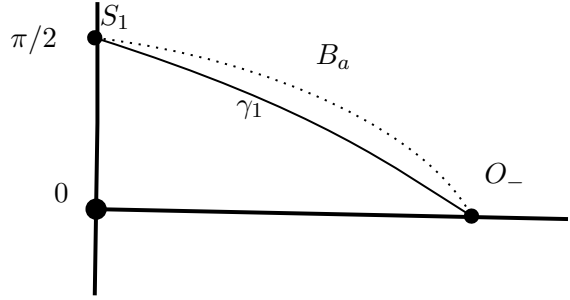
\section{Profile curves}
\label{profile}
We now use our knowledge about orbits of the vector field to classify profile curves and the corresponding minimal surfaces. 

\smallskip

Recall a profile curve $\Gamma = (r(t),\theta(t),\alpha(t))$ corresponding to an orbit $\gamma = (\theta(t),\alpha(t))$ is a curve where $r(t)$ solves
\[\frac{dr}{dt}=r\sin 2\theta \cos(\alpha) .\]
Since $r$ is positive and $ \sin 2\theta$ are positive, $r$ has a critical point only when $\alpha = (k+1/2)\pi, k\in \mathbb{Z}$. These points are always minima.

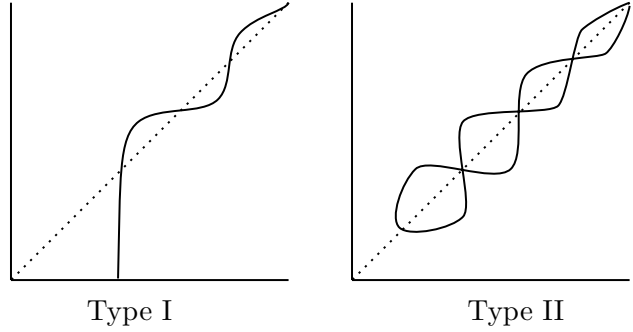
\begin{figure}[ht]
\begin{center}

	\tikzset{every picture/.style={line width=0.75pt}} 
	
	\begin{tikzpicture}[x=0.75pt,y=0.75pt,yscale=-1,xscale=1,scale = 0.8]
	
	\draw    (340,105) -- (340,280) ;
	\draw    (515,280) -- (340,280) ;
	\draw  [dash pattern={on 0.84pt off 2.51pt}]  (515,105) -- (340,280) ;
	\draw    (125,105) -- (125,280) ;
	\draw    (300,280) -- (125,280) ;
	\draw  [dash pattern={on 0.84pt off 2.51pt}]  (300,105) -- (125,280) ;
	\draw    (192.5,279) .. controls (194,215.5) and (192.5,190) .. (205.5,180) .. controls (218.5,170) and (245.5,176) .. (255.5,166) .. controls (265.5,156) and (259.5,135) .. (269.5,124) .. controls (279.5,113) and (298.5,110) .. (300,105) ;
	\draw    (515,105) .. controls (505.6,108.2) and (489.8,117.4) .. (485,122) .. controls (480.2,126.6) and (475.9,164) .. (470,170) .. controls (464.1,176) and (417.8,170.8) .. (410,180) .. controls (402.2,189.2) and (417,232.4) .. (410,240) .. controls (403,247.6) and (376.6,253) .. (370,247) .. controls (363.4,241) and (371.4,218.4) .. (380,210) .. controls (388.6,201.6) and (429.8,219.6) .. (440,210) .. controls (450.2,200.4) and (440,161.2) .. (450,150) .. controls (460,138.8) and (495,140.6) .. (500,137) .. controls (505,133.4) and (514,115) .. (515,107) ;
	
	\draw (411,291) node [anchor=north west][inner sep=0.75pt]   [align=left] {Type II};
	\draw (171,291) node [anchor=north west][inner sep=0.75pt]   [align=left] {Type I};

	\end{tikzpicture}
	
\caption{Profile curves for $n<4g$.}
\end{center}	
\end{figure}
\begin{theorem}[ Profile curves A]
	\label{ProfileA}
	Any profile curve $\Gamma$ corresponding to a F-invariant minimal surface in $\R^n$ with a triple with $n<4g$, is of one of the following types:
	\begin{itemize}
		\item Ray : $\Gamma$ is the ray $\Gamma^*:\theta = \theta^*$. 
		\item Type I : One end of $\Gamma$ intersects either the x-axis or the y-axis orthogonally and the other end is asymptotic to $\Gamma^*$. Further, $r(s)$ increases monotonically and $\Gamma$ intersects $\Gamma^*$ infinitely often.
		\item Type II : Both ends of $\Gamma$ are asymptotic to $\Gamma^*$ and $r(s)$ has a unique minima. Further, $\Gamma$ intersects $\Gamma^*$ and itself infinitely many times. 
	\end{itemize}
\end{theorem}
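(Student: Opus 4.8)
The plan is to transcribe the orbit classification of Theorem~\ref{orbitA} into the language of profile curves through a dictionary between the coordinates $(\theta,\alpha,r)$ and the geometry of $\Gamma$ in $D_g$. Since $\theta=\frac g2\varphi$, the line $\theta=\theta^*$ is exactly the ray $\Gamma^*$, while $\theta=0$ and $\theta=\frac\pi2$ are the two bounding axes of the wedge $D_g$ (the ``$x$''- and ``$y$''-axes). Writing $\Gamma=(r(s),\varphi(s))$ in Euclidean arc length gives $\frac{dr}{ds}=\cos\alpha$ and $r\frac{d\varphi}{ds}=\sin\alpha$, so $\alpha$ is the angle of $\dot\Gamma$ with the radial field $\partial_r$: thus $\alpha\equiv0\pmod\pi$ means $\dot\Gamma$ is radial (so limiting to a fixed point $O_\pm$, which lies on $\theta=\theta^*$, makes $\Gamma$ asymptotic to $\Gamma^*$), $\alpha\equiv\frac\pi2\pmod\pi$ means $\dot\Gamma\perp\partial_r$ (orthogonal intersection with a boundary axis, and exactly the critical locus of $r$ since $\frac{dr}{dt}=r\sin2\theta\cos\alpha$ with $\sin2\theta>0$ on $0<\theta<\frac\pi2$), and each crossing of $\theta=\theta^*$ by $\gamma$ is a crossing of $\Gamma^*$ by $\Gamma$. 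The computation I would single out is that on the lines $\alpha\equiv\frac\pi2\pmod\pi$, where $\cos\alpha=0$, system~\eqref{sysfinal} reduces to $\frac{d\alpha}{dt}=-m\sin\alpha\sin2\theta=\mp m\sin2\theta$, which has a \emph{fixed nonzero sign} because $m>0$ and $\sin2\theta>0$: the flow crosses $\alpha=\frac\pi2+2j\pi$ only downward and $\alpha=-\frac\pi2+2j\pi$ only upward. Hence these lines are one-way barriers; in particular $-\frac\pi2<\alpha<\frac\pi2$ is forward invariant and $-\frac32\pi<\alpha<-\frac\pi2$ is backward invariant.

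For a fixed orbit $O_\pm$ one has $\theta\equiv\theta^*$, $\alpha\equiv0$ or $-\pi$, $\cos\alpha=\pm1$, so $r$ runs over all of $(0,\infty)$ and $\Gamma$ is the ray $\Gamma^*$. For a Type~I separatrix, running from some $S_i$ into a focal $O_\mp$, I would argue as follows. By Lemma~\ref{Si} the relevant branch of the (un)stable manifold leaves $S_i$ transversally across a barrier line into the interior; by the barrier signs $\gamma$ is then confined, for all finite time, to the single strip between two consecutive lines $\alpha\equiv\frac\pi2\pmod\pi$ (it cannot cross the far barrier and still limit onto $O_\mp$), its only boundary contact being the $S_i$-endpoint. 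On that strip $\cos\alpha$ keeps a constant sign, so $r$ is monotone with unique critical point at the $S_i$-end. There $\theta\to0$ or $\frac\pi2$ and $\alpha\to\pm\frac\pi2$, and the arc-length relations ($\frac{dr}{ds}\to0$, $r\frac{d\varphi}{ds}\to\pm1$) show $\Gamma$ meets the corresponding axis orthogonally at a finite radius in finite arc length. At the $O_\mp$-end, $\cos\alpha\to\pm1$ forces $r\to\infty$, while the spiralling of Theorem~\ref{orbitA} produces infinitely many crossings of $\Gamma^*$, to which $\Gamma$ is asymptotic.

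For a Type~II orbit, running from $O_+$ ($\alpha\equiv-\pi$) to $O_-$ ($\alpha\equiv0$) and spiralling into both, both ends of $\Gamma$ are asymptotic to $\Gamma^*$ with infinitely many crossings. The barrier principle pins down $r$: the trajectory starts near $\alpha=-\pi$ and ends near $\alpha=0$, so it must cross $\alpha=-\frac\pi2$, and since the flow there is strictly upward it does so exactly once and never returns; moreover, because $\gamma$ must reach $\alpha\equiv0$ at $O_-$, the one-way barriers prevent it from ever touching $\alpha=\frac\pi2$ or $\alpha=-\frac32\pi$. Consequently $r$ has exactly one critical point, necessarily a minimum, so $r$ decreases to a single neck $r_{\min}$ and increases again. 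Finally, each arm sweeps $(r_{\min},\infty)$ while oscillating in $\varphi$ about $\varphi^*$, crossing $\Gamma^*$ at a discrete increasing sequence of radii; interleaving the two sequences forces the arms to cross one another between consecutive $\Gamma^*$-crossings, yielding infinitely many self-intersections.

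With the barrier observation in hand, the bookkeeping of the critical points of $r$ --- the step I would otherwise have expected to be the crux --- becomes routine. The genuine obstacle I anticipate is twofold: first, turning ``both arms oscillate about $\Gamma^*$'' into an honest proof of infinitely many self-intersections in the wedge $D_g$, which needs a winding/interleaving argument controlling the two arms simultaneously; and second, the completeness statement at the $S_i$-end, namely that the profile curve genuinely closes up smoothly on the focal submanifold at a finite radius $r_0>0$ and finite arc length, which requires the local analysis of the arc-length system near $S_i$ rather than merely the asymptotics of $\gamma$ in the $(\theta,\alpha)$-plane.
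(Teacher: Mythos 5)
Your proposal follows essentially the same route as the paper: you transcribe the orbit types of Theorem~\ref{orbitA} through the dictionary $\theta=\frac{g}{2}\varphi$, $\frac{dr}{dt}=r\sin 2\theta\cos\alpha$, and your barrier computation on the lines $\cos\alpha=0$, namely $\frac{d\alpha}{dt}=\mp m\sin 2\theta$ with fixed sign, is precisely the paper's key step for Type II (the paper computes $\frac{d\alpha}{dt}=m\sin 2\theta>0$ on $\alpha=-\frac{\pi}{2}$, so the orbit crosses that line exactly once and $r$ has a unique critical point, a minimum, with $r\to\infty$ at both ends), while for Type I the paper simply notes $-\frac{\pi}{2}<\alpha<\frac{\pi}{2}$, giving monotone $r$ and orthogonal contact with an axis.

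One caution on the single point where you go beyond the paper's written proof: your middle paragraph asserts that interleaving the two sequences of $\Gamma^*$-crossings \emph{forces} the arms to intersect, and that implication is false as stated --- two graphs over $r$, both tending to $\varphi^*$ and both crossing it along sequences of radii accumulating at infinity, need not meet (compare $e^{-r}\sin r$ with $e^{-r}(\sin r-1+\epsilon)$, whose difference never vanishes) --- so a genuine phase or winding input about the two spirals is required. Your closing paragraph correctly retracts to flagging exactly this, and you should know the paper is no more complete here: its proof only establishes that $\Gamma$ is doubly asymptotic to $\Gamma^*$ and crosses it infinitely often in both directions, leaving the self-intersection claim (and likewise the finite-radius, finite-arc-length closing-up at the $S_i$-end that you flag) implicitly deferred to Wang. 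So on everything the paper actually argues, your proof matches it, and the two obstacles you single out are real gaps in the paper's exposition rather than defects of your approach.
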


\begin{proof}
	In the fixed case \[\theta=\theta^* , \alpha=0\] for all time. So, 
	\[\frac{dr}{d\theta}= Cr\]
	where $C$ is a positive constant, thus we get a ray.\\
	
	For a Type I orbit, 
	\[\lim_{t\to-\infty}\theta(t) = 0  \quad (\text{ or } \frac{\pi}{2}) \text{ and }\lim_{t\to-\infty}\alpha(t) = \frac{\pi}{2}  \quad (\text{ or } -\frac{\pi}{2}),\]
	thus the resulting profile curve will be perpendicular to the $x$ or $y$ axis. Further,
	\[\lim_{t\to\infty}\theta(t) = \theta^*\]
	and $\theta$ crosses the $\theta=\theta^*$ line infinitely often.
	Further, $-\frac{\pi}{2}< \alpha < \frac{\pi}{2}$, thus $\frac{dr}{d\theta}>0$ and approaches $Cr$,thus the profile curve approaches the ray $\Gamma^*$ as $t\to \infty$, intersecting it infinitely often.
	
	For a Type II orbit, 
	\[\lim_{t\to-\infty}\theta(t) = \theta^* \text{ and }\lim_{t\to-\infty}\alpha(t) = -\pi\]
	and 
	\[\lim_{t\to\infty}\theta(t) = \theta^* \text{ and }\lim_{t\to\infty}\alpha(t) = 0.\]
	Further, the orbit crosses the $\theta=\theta^*$ line infinitely often in both directions.	
	On the line $\alpha = -\frac{\pi}{2}$, we have $\frac{d\alpha}{dt}=m \sin(2\theta)>0$ thus no curve can cross this line from above. Further, for a profile curve, $r$ is stationary precisely when the corresponding orbit crosses this line. Thus, on any Type II profile curve there is a unique point where $r$ is stationary. As 
	\[\lim_{t\to-\infty} \frac{dr}{dt}<0 \text{ and }\lim_{t\to\infty} \frac{dr}{dt}>0,\]
	we have 
	\[\lim_{t\to\pm\infty} r(t)=\infty,\]
	thus the profile curve is doubly asymptotic to $\Gamma^*$, crossing it infinitely many times in both directions.
\end{proof}

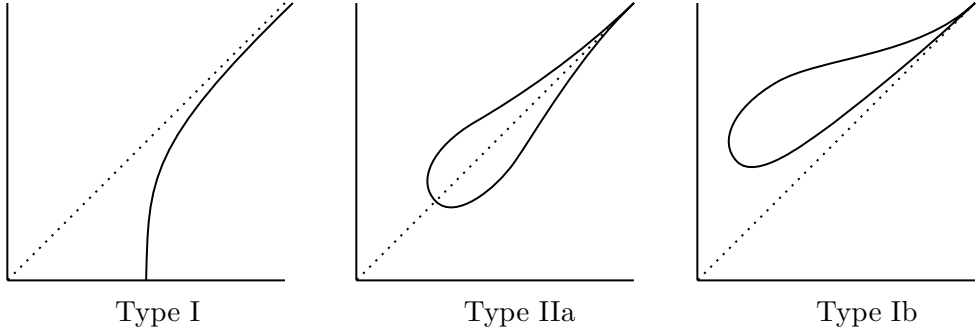
\begin{figure}[ht]
\begin{center}
		
\tikzset{every picture/.style={line width=0.75pt}} 

\begin{tikzpicture}[x=0.75pt,y=0.75pt,yscale=-1,xscale=1,scale = 0.8]

\draw    (260,105) -- (260,280) ;
\draw    (435,280) -- (260,280) ;
\draw  [dash pattern={on 0.84pt off 2.51pt}]  (435,105) -- (260,280) ;
\draw    (435,105) .. controls (399.5,139) and (362.5,164) .. (335,180) .. controls (307.5,196) and (297.75,217.5) .. (310,230) .. controls (322.25,242.5) and (347.5,223) .. (360,205) .. controls (372.5,187) and (400.5,138) .. (435,105) -- cycle ;
\draw    (475,105) -- (475,280) ;
\draw    (650,280) -- (475,280) ;
\draw  [dash pattern={on 0.84pt off 2.51pt}]  (650,105) -- (475,280) ;
\draw    (650,105) .. controls (614.5,139) and (552.5,139) .. (525,155) .. controls (497.5,171) and (487.75,192.5) .. (500,205) .. controls (512.25,217.5) and (543,193.5) .. (555,185) .. controls (567,176.5) and (615.5,138) .. (650,105) -- cycle ;
\draw    (40,105) -- (40,280) ;
\draw    (215,280) -- (40,280) ;
\draw  [dash pattern={on 0.84pt off 2.51pt}]  (215,105) -- (40,280) ;
\draw    (127.5,280) .. controls (129,216.5) and (128,194.5) .. (220,105) ;

\draw (106,291) node [anchor=north west][inner sep=0.75pt]   [align=left] {Type I};
\draw (326,291) node [anchor=north west][inner sep=0.75pt]   [align=left] {Type IIa};
\draw (548,291) node [anchor=north west][inner sep=0.75pt]   [align=left] {Type Ib};

\end{tikzpicture}

\caption{Profile curves for $n\geq 4g$.}
\end{center}
\end{figure}
\begin{theorem}[Profile curves B1]
	\label{ProfileB}
	Any profile curve $\Gamma$, corresponding to a F-invariant minimal surface in $\R^n$ with a stable triple $(g,m_1,m_2)$ (not $(2,1,5)$,$(4,1,6)$) with $n\geq4g$, is of one of the following types:
	
	\begin{itemize}

		\item Ray : $\Gamma$ is the ray $\Gamma^*:\theta = \theta^*$. 
		
		\item Type I : One end of $\Gamma$ intersects either the x-axis or the y-axis orthogonally and the other end is asymptotic to $\Gamma^*$. Further, $r(s)$ increases monotonically and $\Gamma$ never intersects $\Gamma^*$.
		
		\item Type II :  Both ends of $\Gamma$ are asymptotic to $\Gamma^*$ and $r(s)$ has a unique minima. Further, $\Gamma$ intersects $\Gamma^*$ at most once. Further, $\Gamma$ never intersects itself.
	\end{itemize}
\end{theorem}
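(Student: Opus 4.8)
The plan is to read off each profile-curve type directly from the corresponding orbit type in Theorem \ref{orbitB}, using the defining relation $\frac{dr}{dt}=r\sin 2\theta\cos\alpha$ to translate the qualitative behaviour of $\gamma=(\theta,\alpha)$ into that of $\Gamma=(r,\theta,\alpha)$. Since $0<\theta<\frac\pi2$ forces $\sin2\theta>0$, the sign of $\dot r$ is governed entirely by $\cos\alpha$, so the whole argument reduces to tracking the sign of $\cos\alpha$ together with the monotonicity of $\theta$ along the orbit. For the \emph{Ray}, the fixed orbit $\theta\equiv\theta^*$, $\alpha\equiv0$ integrates to $r=r_0e^{\sin(2\theta^*)t}$, i.e. the ray $\Gamma^*$. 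For \emph{Type I} I would take the separatrix orbit, say $\gamma_1$ running from $S_1=(0,\frac\pi2)$ to $O_-=(\theta^*,0)$, supplied by Lemma \ref{Si} and Theorem \ref{orbitB}: along it $\theta$ increases monotonically and, because the triple is stable, never reaches $\theta^*$, whence $\Gamma$ never meets $\Gamma^*$ while being asymptotic to it as $t\to\infty$, and the limit $\alpha\to\frac\pi2$ at $S_1$ forces orthogonal intersection with the axis $\theta=0$, exactly as in the proof of Theorem \ref{ProfileA}. Since $\alpha$ stays in $(0,\frac\pi2)$ we have $\cos\alpha>0$, so $r$ increases monotonically. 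The remaining $S_i$ are handled by the symmetry $(\theta,\alpha)\mapsto(\frac\pi2-\theta,-\alpha)$ and by reversing time.

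For \emph{Type II} the orbit runs from $O_+=(\theta^*,-\pi)$ to $O_-=(\theta^*,0)$. Near each endpoint $\frac{d\log r}{dt}=\sin2\theta\cos\alpha$ tends to $\mp\sin(2\theta^*)\neq0$, so $r\to\infty$ at both ends and $\Gamma$ is doubly asymptotic to $\Gamma^*$. The only zero of $\cos\alpha$ in the relevant range is at $\alpha=-\frac\pi2$, and on that line $\frac{d\alpha}{dt}=m\sin2\theta>0$; hence the orbit crosses it exactly once, producing a single critical point of $r$ which is a minimum because $\cos\alpha$ passes from negative to positive there. The intersections of $\Gamma$ with $\Gamma^*$ correspond precisely to the crossings of $\theta=\theta^*$ by $\gamma$; since $\frac{d\theta}{dt}=\sin\alpha\sin2\theta<0$ throughout the sub-arc where $\alpha\in(-\pi,0)$, Theorem \ref{orbitB} yields one such crossing for Type IIa and none for Type IIb, i.e. at most once.

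The genuinely new point compared with the focal case $n<4g$ is \emph{embeddedness} of the Type II curves. By Lemma \ref{minmax} the critical points of $\theta$ occur only where $\alpha\in\pi\Z$, so in the nodal regime the orbit decomposes into finitely many arcs on each of which $\theta$ is strictly monotone; being monotone in $\theta$, each such arc is a graph $r=r(\theta)$ and hence embedded. Two arcs lying on opposite sides of $\theta=\theta^*$ are trivially disjoint, and two adjacent arcs sharing an extremum of $\theta$ cannot meet a second time because at every common value of $\theta$ their $r$-coordinates are strictly ordered, the arc that has turned back carrying the smaller $r$ (here the unique minimum of $r$ is what makes the comparison coherent). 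This rules out transversal self-crossings, while a tangential coincidence, at which $\alpha$ would also agree, is impossible by uniqueness of solutions of \eqref{sysfinal} since Theorem \ref{periodic} forbids periodic orbits. Combining these, $\Gamma$ is embedded and, by the count above, meets $\Gamma^*$ at most once. I expect the ordering-of-arcs comparison to be the main obstacle: it is precisely where the nodal, non-spiralling nature of $O_\pm$ must be converted into a strict separation of the two arms, and it is the step that fails when $n<4g$, accounting for the infinitely many self-intersections found there.
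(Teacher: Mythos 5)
Your treatment of the Ray and Type I cases, of the asymptotics, of the unique minimum of $r$ (via the barrier $\alpha=-\tfrac{\pi}{2}$, where $\frac{d\alpha}{dt}=m\sin 2\theta>0$), and of the at-most-one crossing of $\Gamma^*$ (via $\frac{d\theta}{dt}<0$ on $-\pi<\alpha<0$ together with the separatrices) all match the paper's proof, which likewise just transports Theorem \ref{orbitB} through $\frac{dr}{dt}=r\sin 2\theta\cos\alpha$ and refers back to Theorem \ref{ProfileA}. The gap is exactly where you predicted it: the embeddedness of Type II curves. Your ``ordering of adjacent arcs'' claim --- that at every common value of $\theta$ the two graphs $r(\theta)$ are strictly ordered --- is precisely the no-self-intersection statement restricted to those arcs, and you give no mechanism to establish it; asserting it is circular. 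Your fallback via uniqueness of solutions only excludes \emph{tangential} coincidences: a transversal self-crossing of $\Gamma$ occurs at the same $(r,\theta)$ but with different $\alpha$, i.e.\ at two distinct points of the phase space of \eqref{sysfinal}, so neither ODE uniqueness nor Theorem \ref{periodic} forbids it. Moreover, the feature you hope will ``make the comparison coherent'' --- the unique minimum of $r$ --- cannot be the engine: in the focal case $n<4g$ Type II profile curves also have a unique minimum of $r$ (Theorem \ref{ProfileA}), yet they self-intersect infinitely often.

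The paper closes this step with a different, global device that you would need to import: it first uses the at-most-one crossing of $\Gamma^*$ to split $\Gamma$ into pieces $\Gamma^{\pm}$ each lying on one side of $\Gamma^*$, reducing w.l.o.g.\ to $\Gamma\subset\{\theta<\theta^*\}$; it then fixes one Type I profile curve $r_1(\theta)$ for $\gamma_1$ and observes that the level sets of $f(r,\theta)=r/r_1(\theta)$ --- the homothetic dilations of that curve --- foliate the region $\theta<\theta^*$ by Type I profile curves, all of which are geodesics of the reduced metric. If $\Gamma(t_0)=\Gamma(t_1)$, then $f\circ\Gamma$ takes equal values at $t_0,t_1$, so by Rolle's theorem $\Gamma$ is tangent to some leaf at an interior point; by uniqueness of geodesics $\Gamma$ would then coincide with that Type I leaf, contradicting that $\Gamma$ is Type II. Note that your own closing tool (tangency to a geodesic plus geodesic uniqueness) is exactly the right one --- but it must be applied to tangency with a leaf of this foliation, produced by Rolle, rather than to a self-tangency of $\Gamma$. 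This is also where stability genuinely enters: only for stable triples do the Type I curves stay on one side of $\Gamma^*$ and hence foliate $\{0<\theta<\theta^*\}$ by pairwise disjoint geodesics, which is why the argument (and the conclusion) fails when $n<4g$.
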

\begin{proof}
	If the orbit is fixed, the proof is identical to above.\\
	
	If the orbit is Type I, the proof is similar to above except since the orbit does not cross the line $\theta=\theta^*$, it never intersects the ray $\Gamma^*$.\\
	
	If $\gamma$ is Type II, the proof that $\Gamma$ is asymptotic to $\Gamma^*$ and has a unqiue minima still holds. Further, it only intersects $\Gamma^*$ at most once, since $\frac{d\theta}{dt}<0$ for $-\pi<\alpha<0$ and the curves $\gamma_1,\gamma_4$ prevent $\gamma$ from crossing the $\theta=\theta^*$ line again.\\
	
	To show $\Gamma$ never intersects itself, we proceed as follows. If $\Gamma$ intersects $\Gamma^*$ we can split it into two sections, $\Gamma^+$ and $\Gamma^-$, such that both lie strictly on one side of $\Gamma^*$. Assume w.l.o.g that $\Gamma\subset \{(r,\theta): \theta<\theta^*, (r,\theta)\in D_g\}$. Fix one profile curve corresponding to $\gamma_1$. We can write this curve as $r_1(\theta)$, $0<\theta<\theta^*$. Consider the function 
	\[f(r,\theta) = \frac{r}{r_1(\theta)}.\]
	The level sets of this function are exactly the Type I profile curves corresponding to $\gamma_1$. Assume $\Gamma(t_0)=\Gamma(t_1)$. Then by Rolle's theorem there exits a point $x_0$ on $\Gamma$, such that $\Gamma$ is tangent to a level set of $f$ at $x_0$. This is a contradiction since $\Gamma$ is of Type II but the level sets of $f$ are Type I and since profile curves are geodesics, this violates the uniqueness of geodesics.	
\end{proof}

\begin{theorem}[Profile curves B2]
	\label{ProfileC}
	Any profile curve $\Gamma$, corresponding to a F-invariant minimal surface in $\R^n$ with an unstable triple $(g,m_1,m_2)$ ( $(2,1,5)$,$(4,1,6)$) with $n\geq4g$, is of one of the following types:
	
	\begin{itemize}
		
		\item Ray : $\Gamma$ is the ray $\Gamma^*:\theta = \theta^*$. 
		
		\item Type I : One end of $\Gamma$ intersects either the x-axis or the y-axis orthogonally and the other end is asymptotic to $\Gamma^*$. Further, $r(s)$ increases monotonically and $\Gamma$ intersects $\Gamma^*$ at most once.
		
		\item Type II :  Both ends of $\Gamma$ are asymptotic to $\Gamma^*$ and $r(s)$ has a unique minima. Further, $\Gamma$ intersects $\Gamma^*$ at most once.
	\end{itemize}
\end{theorem}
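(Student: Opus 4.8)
The plan is to mirror the proofs of Theorems \ref{ProfileA} and \ref{ProfileB}, treating the three orbit classes of the underlying vector field separately and reading off the behaviour of $r(t)$ from the sign of $\cos\alpha$ in
\[\frac{dr}{dt}=r\sin 2\theta\cos\alpha.\]
The ray case is identical to before: on the fixed orbit $\theta\equiv\theta^*$, $\alpha\equiv 0$ one has $\frac{dr}{d\theta}=Cr$ with $C>0$, so $\Gamma=\Gamma^*$.

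For a Type I profile curve the orbit is one of the separatrices $\gamma_i$, so that $L^-(\gamma)=S_i$ and $L^+(\gamma)=O_\pm$ (or the reverse). At the $S_i$ end one has $\theta\to 0$ or $\tfrac{\pi}{2}$ and $\alpha\to\pm\tfrac{\pi}{2}$, which gives the orthogonal intersection with an axis, while at the $O_\pm$ end $\theta\to\theta^*$ gives the asymptote to $\Gamma^*$; these limits are exactly as in Theorem \ref{ProfileB}. To obtain monotonicity of $r$ I would show that $\cos\alpha$ keeps a constant sign along each separatrix. This follows because the lines $\alpha=(k+\tfrac12)\pi$ are one-way barriers: on such a line $\cos\alpha=0$, so $\frac{d\alpha}{dt}=-m\sin\!\big((k+\tfrac12)\pi\big)\sin 2\theta=\mp m\sin 2\theta$ has a fixed sign, and hence each $\gamma_i$ is trapped in a single strip $\big((k-\tfrac12)\pi,(k+\tfrac12)\pi\big)$ on which $\cos\alpha$ does not vanish; thus $r$ is strictly monotone. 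The genuinely new point, compared with the stable case, is the intersection count with $\Gamma^*$: since the triple is \emph{unstable}, $\gamma_1$ (and, under the symmetry $(\theta,\alpha)\mapsto(\tfrac{\pi}{2}-\theta,-\alpha)$, its mirror $\gamma_3$) crosses the line $\theta=\theta^*$. Because $n\ge 4g$ forces $O_\pm$ to be nodal rather than focal, the separatrix cannot spiral and therefore crosses $\theta=\theta^*$ exactly once, so $\Gamma$ meets $\Gamma^*$ at most once.

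For a Type II profile curve the orbit runs from $O_+$ to $O_-$, so $\theta\to\theta^*$ at both ends and $\Gamma$ is doubly asymptotic to $\Gamma^*$; as $t\to\pm\infty$ one checks $r\to\infty$ exactly as in Theorem \ref{ProfileA}. Uniqueness of the minimum of $r$ is again a one-way-barrier argument: $\frac{dr}{dt}$ vanishes only where $\alpha=-\tfrac{\pi}{2}$, and since $\frac{d\alpha}{dt}=m\sin 2\theta>0$ on the line $\alpha=-\tfrac{\pi}{2}$, the orbit crosses this line exactly once, producing a single, and necessarily minimal, critical point of $r$. For the intersection count I would argue on the line $\theta=\theta^*$, where $\frac{d\theta}{dt}=\sin\alpha\,\sin 2\theta^*$ has the sign of $\sin\alpha$: two consecutive crossings of $\theta=\theta^*$ must occur in opposite directions and hence force $\alpha$ to pass through a multiple of $\pi$ in between; since $O_\pm$ are nodes the orbit does not oscillate around them, so no such pair of crossings exists and $\Gamma$ meets $\Gamma^*$ at most once.

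I expect the intersection count in the Type II case to be the main obstacle. In the stable case (Theorem \ref{ProfileB}) the separatrices $\gamma_1,\gamma_4$ stay on one side of $\theta=\theta^*$ and cleanly confine every interior orbit; once the triple is unstable $\gamma_1$ itself crosses $\theta=\theta^*$, so this confinement must be replaced by the non-spiralling (nodal) behaviour of $O_\pm$. Making the ``no second crossing'' step rigorous---ruling out an excursion that crosses $\theta=\theta^*$, returns, and crosses again before limiting into $O_-$---is where the argument needs the most care, and is precisely where the hypothesis $n\ge 4g$ (nodal, not focal, $O_\pm$) is used.
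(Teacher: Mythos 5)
Your overall route---mirror Theorems \ref{ProfileA} and \ref{ProfileB} and isolate what changes for unstable triples---is exactly the paper's route: its proof literally says the argument is identical to the stable case, except that (i) in Type I one separatrix now crosses $\theta=\theta^*$, so $\Gamma$ may meet $\Gamma^*$ (at most once), and (ii) in Type II the Rolle/level-set embeddedness argument of Theorem \ref{ProfileB} fails, which is precisely why the ``never intersects itself'' clause is dropped from the statement. Your ray case, the orthogonality and monotonicity of $r$ in Type I (trapping between the one-way barriers $\alpha=\pm\tfrac{\pi}{2}$), and the unique minimum of $r$ in Type II (one-way barrier at $\alpha=-\tfrac{\pi}{2}$) all match the paper's reasoning.

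The genuine gap is in both intersection counts, where you substitute nodality for the paper's confinement argument. ``Nodal, hence no spiralling'' is a \emph{local} statement: it rules out infinitely many crossings of $\theta=\theta^*$ accumulating at $O_\pm$, but it does not rule out a finite number of extra crossings---say one loop around $O_-$ away from the fixed point, contributing two additional transversal crossings before the orbit settles into the node along its asymptotic slope $\lambda_-$. Your Type II parity step is actually false as an impossibility claim: between two opposite-direction crossings $\alpha$ must indeed pass through a multiple of $\pi$, but Type IIa orbits \emph{do} cross both $\alpha=0$ and $\alpha=-\pi$ (Theorem \ref{orbitB}), so ``$\alpha$ passes through $k\pi$'' is not a forbidden event and no contradiction follows. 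What actually bounds the crossings is global: on $-\pi<\alpha<0$ one has $\frac{d\theta}{dt}<0$, so at most one crossing occurs inside that strip; on $\alpha=0$ one has $\frac{d\alpha}{dt}>0$ for $\theta<\theta^*$ and $\frac{d\alpha}{dt}<0$ for $\theta>\theta^*$, so excursions above $\alpha=0$ begin at $\theta<\theta^*$ and end at $\theta>\theta^*$; and orbits are penned in by the separatrices (which cannot be crossed, by uniqueness of solutions). Likewise, that $\gamma_1$ itself crosses $\theta=\theta^*$ only \emph{once} for $(2,1,5)$ and $(4,1,6)$ comes from the barrier-curve analysis (the curves $B_a$), not from the node structure. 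So the nodality appeals should be replaced by the stable-case barrier/confinement argument of Theorem \ref{ProfileB}, adapted to the region bounded by the once-crossing separatrix; as written, your ``no second crossing'' steps do not close---exactly at the point you yourself flagged as the main obstacle.
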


\begin{proof}
	The proof is identical to the stable case, except in the following cases.
	
	In the Type I case, since one $\gamma_i$ can cross the line $\theta = \theta^*$ once, $\Gamma$ can intersect the ray $\Gamma^*$. In the Type II case, the argument for embeddedness fails.
\end{proof}

\section{Free Boundary Minimal Surfaces}
\label{fbms}
We can now use the results from previous sections to construct F-invariant free boundary minimal surfaces in the unit ball. We say a smooth submanifold $\Sigma\subset B^k(1)$ is a \textit{free boundary minimal surface} in $B^k(1)$ if $\Sigma$ is minimal, $\partial\Sigma\subset B^k(1)$, and $\Sigma$ intersects $\partial B^k(1)$ orthogonally along $\partial\Sigma$.
In Section 5 and 6 we proved a reduction theorem, showing that to find F-invariant minimal surfaces in $\R^n$, it suffices to study so called profile curves. The equations describing these profile curves just depend on the triple $(g,m_1,m_2)$, where $g$ is the number of distinct principal curvatures associated to the isoparametric hypersurface corresponding to $F$ and $m_1$,$m_2$ are the two multiplicities of these curvatures. The condition that $\Sigma$ intersects $\partial B^k(1)$ orthogonally along $\partial\Sigma$ immediately translates to the same condition on profile curves. Thus, we say a profile curve $\gamma$ in $D_g$ is a \textit{free boundary profile curve} if for all $t$ such that $\norm{\gamma(t)} = 1$, $\gamma(t)=\gamma'(t)$. In this context we construct two families of free boundary minimal surfaces:  $\Sigma_{g,m_1,m_2}^k$ and $\Omega_{g,m_1,m_2}$. $\Sigma_{g,m_1,m_2}^k$ is constructed in \cite{Wang} and we simply repeat its construction. A special case also appears in \cite{McGrath}. The family $\Omega_{g,m_1,m_2}$ is based on a construction from \cite{McGrath} but, to our knowledge, is completely new in this generality. A full list of triples satisfying the conditions is provided in Section \ref{Isoparametric hypersurfaces}.

\begin{figure}
	\label{freeboundaryfig}
	\begin{center}

	\tikzset{every picture/.style={line width=0.75pt}} 
	
	\begin{tikzpicture}[x=0.75pt,y=0.75pt,yscale=-1,xscale=1,scale=0.8]
	
	\draw [line width=1.5]    (66.24,39.38) -- (66.95,78.29) -- (65.34,275.21) ;
	\draw [line width=1.5]    (313.5,42.91) -- (312.37,275.21) ;
	\draw [line width=1.5]  [dash pattern={on 1.69pt off 2.76pt}]  (65.95,96.33) -- (311.37,96.33) ;
	\draw [shift={(311.37,96.33)}, rotate = 0] [color={rgb, 255:red, 0; green, 0; blue, 0 }  ][fill={rgb, 255:red, 0; green, 0; blue, 0 }  ][line width=1.5]      (0, 0) circle [x radius= 3.05, y radius= 3.05]   ;
	\draw [shift={(65.95,96.33)}, rotate = 0] [color={rgb, 255:red, 0; green, 0; blue, 0 }  ][fill={rgb, 255:red, 0; green, 0; blue, 0 }  ][line width=1.5]      (0, 0) circle [x radius= 3.05, y radius= 3.05]   ;
	\draw [line width=1.5]  [dash pattern={on 1.69pt off 2.76pt}]  (66.95,216.25) -- (312.37,216.25) ;
	\draw [shift={(312.37,216.25)}, rotate = 0] [color={rgb, 255:red, 0; green, 0; blue, 0 }  ][fill={rgb, 255:red, 0; green, 0; blue, 0 }  ][line width=1.5]      (0, 0) circle [x radius= 3.05, y radius= 3.05]   ;
	\draw [shift={(66.95,216.25)}, rotate = 0] [color={rgb, 255:red, 0; green, 0; blue, 0 }  ][fill={rgb, 255:red, 0; green, 0; blue, 0 }  ][line width=1.5]      (0, 0) circle [x radius= 3.05, y radius= 3.05]   ;
	\draw [line width=1.5]    (66.95,78.29) ;
	\draw [line width=1.5]    (161.34,98.33) ;
	\draw [shift={(161.34,98.33)}, rotate = 0] [color={rgb, 255:red, 0; green, 0; blue, 0 }  ][fill={rgb, 255:red, 0; green, 0; blue, 0 }  ][line width=1.5]      (0, 0) circle [x radius= 4.36, y radius= 4.36]   ;
	\draw [line width=1.5]    (161.34,216.25) ;
	\draw [shift={(161.34,216.25)}, rotate = 0] [color={rgb, 255:red, 0; green, 0; blue, 0 }  ][fill={rgb, 255:red, 0; green, 0; blue, 0 }  ][line width=1.5]      (0, 0) circle [x radius= 4.36, y radius= 4.36]   ;
	\draw [line width=1.5]    (65.24,39.38) .. controls (130.75,60.6) and (120.01,64.14) .. (161.24,98.33) ;
	\draw [shift={(118.35,61.24)}, rotate = 210.88] [fill={rgb, 255:red, 0; green, 0; blue, 0 }  ][line width=0.08]  [draw opacity=0] (9.91,-4.76) -- (0,0) -- (9.91,4.76) -- (6.58,0) -- cycle    ;
	\draw [shift={(65.24,39.38)}, rotate = 17.95] [color={rgb, 255:red, 0; green, 0; blue, 0 }  ][fill={rgb, 255:red, 0; green, 0; blue, 0 }  ][line width=1.5]      (0, 0) circle [x radius= 3.05, y radius= 3.05]   ;
	\draw [line width=1.5]    (161.34,98.33) .. controls (211.41,138.42) and (217.85,138.42) .. (312.37,157.29) ;
	\draw [shift={(312.37,157.29)}, rotate = 11.29] [color={rgb, 255:red, 0; green, 0; blue, 0 }  ][fill={rgb, 255:red, 0; green, 0; blue, 0 }  ][line width=1.5]      (0, 0) circle [x radius= 3.05, y radius= 3.05]   ;
	\draw [shift={(232.12,139.74)}, rotate = 17.34] [fill={rgb, 255:red, 0; green, 0; blue, 0 }  ][line width=0.08]  [draw opacity=0] (9.91,-4.76) -- (0,0) -- (9.91,4.76) -- (6.58,0) -- cycle    ;
	\draw [line width=1.5]    (161.34,216.25) .. controls (211.41,256.34) and (217.85,256.34) .. (312.37,275.21) ;
	\draw [shift={(312.37,275.21)}, rotate = 11.29] [color={rgb, 255:red, 0; green, 0; blue, 0 }  ][fill={rgb, 255:red, 0; green, 0; blue, 0 }  ][line width=1.5]      (0, 0) circle [x radius= 4.36, y radius= 4.36]   ;
	\draw [shift={(232.12,257.65)}, rotate = 197.34] [fill={rgb, 255:red, 0; green, 0; blue, 0 }  ][line width=0.08]  [draw opacity=0] (13.4,-6.43) -- (0,0) -- (13.4,6.44) -- (8.9,0) -- cycle    ;
	\draw [line width=1.5]    (65.34,157.29) .. controls (102.93,174.98) and (126.56,182.05) .. (161.34,216.25) ;
	\draw [shift={(116.64,181.83)}, rotate = 28.78] [fill={rgb, 255:red, 0; green, 0; blue, 0 }  ][line width=0.08]  [draw opacity=0] (9.91,-4.76) -- (0,0) -- (9.91,4.76) -- (6.58,0) -- cycle    ;
	\draw [shift={(65.34,157.29)}, rotate = 25.2] [color={rgb, 255:red, 0; green, 0; blue, 0 }  ][fill={rgb, 255:red, 0; green, 0; blue, 0 }  ][line width=1.5]      (0, 0) circle [x radius= 3.05, y radius= 3.05]   ;
	\draw [line width=1.5]    (66.77,275.21) ;
	\draw [shift={(66.77,275.21)}, rotate = 0] [color={rgb, 255:red, 0; green, 0; blue, 0 }  ][fill={rgb, 255:red, 0; green, 0; blue, 0 }  ][line width=1.5]      (0, 0) circle [x radius= 3.05, y radius= 3.05]   ;
	\draw [line width=1.5]  [dash pattern={on 1.69pt off 2.76pt}]  (65.95,157.29) -- (311.37,157.29) ;
	\draw [shift={(311.37,157.29)}, rotate = 0] [color={rgb, 255:red, 0; green, 0; blue, 0 }  ][fill={rgb, 255:red, 0; green, 0; blue, 0 }  ][line width=1.5]      (0, 0) circle [x radius= 3.05, y radius= 3.05]   ;
	\draw [shift={(65.95,157.29)}, rotate = 0] [color={rgb, 255:red, 0; green, 0; blue, 0 }  ][fill={rgb, 255:red, 0; green, 0; blue, 0 }  ][line width=1.5]      (0, 0) circle [x radius= 3.05, y radius= 3.05]   ;
	\draw    (161.34,216.25) .. controls (198.79,244) and (248.72,241) .. (248.12,217) .. controls (247.53,193) and (221.31,170) .. (209.5,160) .. controls (203.11,154.59) and (179.08,143.61) .. (158.29,131.03) .. controls (140.67,120.36) and (125.37,108.55) .. (125.09,98) .. controls (124.5,75) and (148.87,88) .. (161.34,98.33) ;
	\draw [shift={(207.68,235.34)}, rotate = 189.45] [fill={rgb, 255:red, 0; green, 0; blue, 0 }  ][line width=0.08]  [draw opacity=0] (10.72,-5.15) -- (0,0) -- (10.72,5.15) -- (7.12,0) -- cycle    ;
	\draw [shift={(234.22,184.76)}, rotate = 413.75] [fill={rgb, 255:red, 0; green, 0; blue, 0 }  ][line width=0.08]  [draw opacity=0] (10.72,-5.15) -- (0,0) -- (10.72,5.15) -- (7.12,0) -- cycle    ;
	\draw [shift={(183.76,145.26)}, rotate = 387.97] [fill={rgb, 255:red, 0; green, 0; blue, 0 }  ][line width=0.08]  [draw opacity=0] (10.72,-5.15) -- (0,0) -- (10.72,5.15) -- (7.12,0) -- cycle    ;
	\draw [shift={(138.25,117.16)}, rotate = 398.1] [fill={rgb, 255:red, 0; green, 0; blue, 0 }  ][line width=0.08]  [draw opacity=0] (10.72,-5.15) -- (0,0) -- (10.72,5.15) -- (7.12,0) -- cycle    ;
	\draw [shift={(140.63,86.29)}, rotate = 190.22] [fill={rgb, 255:red, 0; green, 0; blue, 0 }  ][line width=0.08]  [draw opacity=0] (10.72,-5.15) -- (0,0) -- (10.72,5.15) -- (7.12,0) -- cycle    ;
	\draw    (330,159) -- (382.5,158.04) ;
	\draw [shift={(384.5,158)}, rotate = 538.95] [color={rgb, 255:red, 0; green, 0; blue, 0 }  ][line width=0.75]    (10.93,-3.29) .. controls (6.95,-1.4) and (3.31,-0.3) .. (0,0) .. controls (3.31,0.3) and (6.95,1.4) .. (10.93,3.29)   ;
	\draw [line width=1.5]    (125.09,98) ;
	\draw [shift={(125.09,98)}, rotate = 0] [color={rgb, 255:red, 0; green, 0; blue, 0 }  ][fill={rgb, 255:red, 0; green, 0; blue, 0 }  ][line width=1.5]      (0, 0) circle [x radius= 3.05, y radius= 3.05]   ;
	\draw [line width=1.5]    (248.12,217) ;
	\draw [shift={(248.12,217)}, rotate = 0] [color={rgb, 255:red, 0; green, 0; blue, 0 }  ][fill={rgb, 255:red, 0; green, 0; blue, 0 }  ][line width=1.5]      (0, 0) circle [x radius= 3.05, y radius= 3.05]   ;
	\draw [line width=1.5]    (209.5,160) ;
	\draw [shift={(209.5,160)}, rotate = 0] [color={rgb, 255:red, 0; green, 0; blue, 0 }  ][fill={rgb, 255:red, 0; green, 0; blue, 0 }  ][line width=1.5]      (0, 0) circle [x radius= 3.05, y radius= 3.05]   ;
	\draw    (401.5,42) -- (401.5,280) ;
	\draw    (632.5,280) -- (401.5,280) ;
	\draw  [draw opacity=0][dash pattern={on 0.84pt off 2.51pt}] (404,99.34) .. controls (512.52,99.52) and (600.58,179.01) .. (600.91,277.35) .. controls (600.91,277.56) and (600.91,277.78) .. (600.91,278) -- (403.5,278) -- cycle ; \draw  [dash pattern={on 0.84pt off 2.51pt}] (404,99.34) .. controls (512.52,99.52) and (600.58,179.01) .. (600.91,277.35) .. controls (600.91,277.56) and (600.91,277.78) .. (600.91,278) ;
	\draw    (461.5,108) .. controls (451.5,140) and (427.5,224) .. (447.5,242) .. controls (467.5,260) and (549.5,184) .. (564.5,173) ;
	\draw [shift={(444.14,175.33)}, rotate = 101.12] [fill={rgb, 255:red, 0; green, 0; blue, 0 }  ][line width=0.08]  [draw opacity=0] (10.72,-5.15) -- (0,0) -- (10.72,5.15) -- (7.12,0) -- cycle    ;
	\draw [shift={(510.11,216.73)}, rotate = 323.99] [fill={rgb, 255:red, 0; green, 0; blue, 0 }  ][line width=0.08]  [draw opacity=0] (10.72,-5.15) -- (0,0) -- (10.72,5.15) -- (7.12,0) -- cycle    ;
	\draw  [line width=0.75]  (460.33,114.43) -- (465,115.75) -- (466.98,108.75) ;
	\draw  [line width=0.75]  (558.4,177.87) -- (562.5,183) -- (568.6,178.13) ;
	\draw [line width=1.5]    (158.29,131.03) ;
	\draw [shift={(158.29,131.03)}, rotate = 0] [color={rgb, 255:red, 0; green, 0; blue, 0 }  ][fill={rgb, 255:red, 0; green, 0; blue, 0 }  ][line width=1.5]      (0, 0) circle [x radius= 3.05, y radius= 3.05]   ;
	
	\draw (33.53,90.42) node [anchor=north west][inner sep=0.75pt]   [align=left] {0};
	\draw (54.92,7.97) node [anchor=north west][inner sep=0.75pt]   [align=left] {$\displaystyle \alpha $};
	\draw (15.96,31.55) node [anchor=north west][inner sep=0.75pt]   [align=left] {$\displaystyle \pi /2$};
	\draw (7.18,147.11) node [anchor=north west][inner sep=0.75pt]   [align=left] {$\displaystyle -\pi /2$};
	\draw (23.95,206.07) node [anchor=north west][inner sep=0.75pt]   [align=left] {$\displaystyle -\pi $};
	\draw (0.79,265.03) node [anchor=north west][inner sep=0.75pt]   [align=left] {$\displaystyle -3\pi /2$};
	\draw (86.34,77) node [anchor=north west][inner sep=0.75pt]  [font=\small] [align=left] {$\displaystyle \gamma ( t_{+})$};
	\draw (173.8,160) node [anchor=north west][inner sep=0.75pt]  [font=\small] [align=left] {$\displaystyle \gamma ( t_{0})$};
	\draw (251.09,196) node [anchor=north west][inner sep=0.75pt]  [font=\small] [align=left] {$\displaystyle \gamma ( t_{-})$};
	\draw (546.09,184) node [anchor=north west][inner sep=0.75pt]  [font=\small] [align=left] {$\displaystyle \Gamma ( t_{-})$};
	\draw (414.09,241) node [anchor=north west][inner sep=0.75pt]  [font=\small] [align=left] {$\displaystyle \Gamma ( t_{0})$};
	\draw (426.09,99) node [anchor=north west][inner sep=0.75pt]  [font=\small] [align=left] {$\displaystyle \Gamma ( t_{+})$};
	\draw (128.8,134) node [anchor=north west][inner sep=0.75pt]  [font=\small] [align=left] {$\displaystyle \gamma ( 0)$};

	\end{tikzpicture}
	\caption{The construction in Theorem \ref{freeboundary}.2.}
	\end{center}
\end{figure}
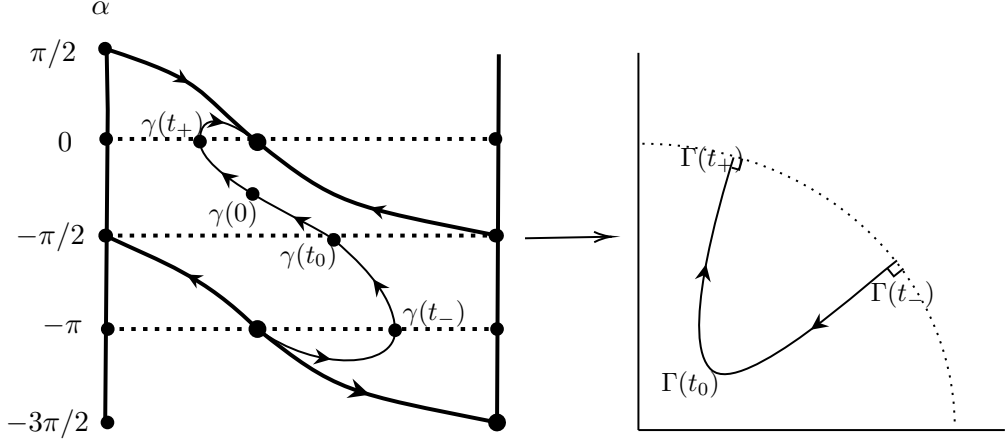
\begin{theorem}
	\label{freeboundary}
	Given an isoparametric hypersurface in $S^{n-1}$ with corresponding triple $(g,m_1,m_2)$ where $n=\frac{m_1+m_2}{2}g+2$, we can construct F-invariant free boundary minimal surfaces in $\R^n$ in the following ways:
	\begin{enumerate}
		\item 	If $n<4g$, for all natural numbers $k$ we can construct a free boundary minimal surface $\Sigma_{g,m_1,m_2}^k$.
		\item If $(g,m_1,m_2) \neq (2,1,5), (4,1,6)$ and $n\geq 4g$, we can construct a free boundary minimal surface $\Omega_{g,m_1,m_2}$.
	\end{enumerate}
\end{theorem}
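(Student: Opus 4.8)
The plan is to reinterpret the free boundary condition on the level of profile curves and then read it off from the classification already obtained. Since $0<\theta<\frac\pi2$ forces $\sin 2\theta\neq 0$, the relation $\frac{d\theta}{dt}=\sin\alpha\sin 2\theta$ shows that a profile curve is momentarily \emph{radial} (its velocity points in the $\partial_r$ direction, hence it meets a circle $r=\mathrm{const}$ orthogonally) exactly when $\sin\alpha=0$, i.e. when $\alpha\in\pi\Z$. Thus a rescaled profile curve defines a free boundary minimal surface as soon as it reaches the circle $r=1$ at such an instant. The second ingredient is the behaviour of $r$ itself: from $\frac{dr}{dt}=r\sin 2\theta\cos\alpha$ the only critical points of $r$ occur at $\alpha=(k+\frac12)\pi$ and, as recorded before Theorem \ref{ProfileA}, they are all minima. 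Consequently $r$ has no interior local maximum along a non-fixed orbit, so on any arc joining two instants with a common value of $r$ that value is the maximum of $r$ on the arc. Combining these: if a profile curve is radial at two instants $t_1<t_2$ with $r(t_1)=r(t_2)$, then after the homothety $r\mapsto r/r(t_1)$ the arc between them lies in the closed unit ball and meets $\partial B$ orthogonally at both endpoints, hence is a free boundary profile curve. The whole problem reduces to producing radial instants with equal radii.

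For part (1) I would use the separatrix $\gamma_1$, which by Theorem \ref{orbitA} runs from $S_1$ into $O_-$ while spiralling about $O_-$, and whose profile curve by Theorem \ref{ProfileA} caps off smoothly on the focal submanifold $V_1$ (the orthogonal intersection with the $\theta=0$ axis) and has $r$ \emph{strictly increasing}. Spiralling into $O_-=(\theta^*,0)$ forces $\alpha$ to oscillate about $0$ and hence to cross the radial line $\alpha=0$ at an infinite increasing sequence of instants, at which $r$ takes values $r_1<r_2<\cdots\to\infty$. For each $k\in\N$ I rescale by $1/r_k$; because $r$ increases monotonically, the resulting profile curve meets $r=1$ radially at its $k$-th crossing and stays inside the unit ball beforehand, while still capping off on $V_1$. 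This yields the family $\Sigma^k_{g,m_1,m_2}$, one surface for every natural number $k$.

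For part (2), with $n\geq 4g$ and $(g,m_1,m_2)$ stable, the relevant curves are the embedded Type II profile curves of Theorem \ref{ProfileB}: they are doubly asymptotic to $\Gamma^*$, have a unique minimum of $r$, and never self-intersect. By Theorem \ref{orbitB} such an orbit of type IIa crosses $\alpha=0$ exactly once, say at $t_+$ with $r=r_+$, and $\alpha=-\pi$ exactly once, at $t_-$ with $r=r_-$, with the unique $r$-minimum in between; the arc $[t_+,t_-]$ therefore lies in $\{r\le\max(r_+,r_-)\}$. It remains to find \emph{one} such orbit with $r_+=r_-$. I would run an intermediate value argument across the one-parameter family of Type IIa orbits: as the orbit degenerates towards the upper pair of separatrices $\gamma_1\cup\gamma_3$ the crossing at $\alpha=0$ is pushed in relative to the one at $\alpha=-\pi$, and towards the lower pair $\gamma_2\cup\gamma_4$ the imbalance reverses (the substitution $(\theta,\alpha)\mapsto(\frac\pi2-\theta,-\alpha)$, which interchanges $m_1\leftrightarrow m_2$ and swaps $\gamma_1$ with $\gamma_3$, relates the two limiting configurations), so the continuous quantity $r_+/r_-$ takes values on both sides of $1$ and must equal $1$ somewhere. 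Rescaling that orbit produces the single surface $\Omega_{g,m_1,m_2}$, embedded because its profile curve is. The triples $(2,1,5)$ and $(4,1,6)$ are excluded precisely because they are unstable: there the separatrix crosses $\theta=\theta^*$ (Theorem \ref{ProfileC}), the Type II curves may meet $\Gamma^*$ and lose embeddedness, and the monotone control of $r_+/r_-$ used above breaks down.

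The main obstacle is the matching step $r_+=r_-$ in part (2): one has to show that $r_+/r_-$ varies continuously across the family of Type IIa orbits and genuinely sweeps through $1$, which requires quantitative control of the two radial crossings as the orbit approaches the separatrices bounding $G$. The input for this is exactly the local pictures at $O_\pm$ and the separatrix behaviour established earlier, together with the $m_1\leftrightarrow m_2$ reflection to relate the two limiting ratios. By comparison, part (1) is essentially immediate once monotonicity of $r$ along $\gamma_1$ is in hand, and the remaining bookkeeping (smoothness of the cap on $V_1$, genuine orthogonality at $r=1$) is routine.
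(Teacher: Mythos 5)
Your reduction of the free boundary condition to $\alpha\in\pi\Z$ and your entire part (1) coincide with the paper's proof: there too one takes the separatrix $\gamma_1$, uses that $O_-$ is focal when $n<4g$ to obtain infinitely many crossing times $t_k$ of the line $\alpha=0$, normalizes $r(t_k)=1$, and uses that $\gamma_1$ stays in $-\frac{\pi}{2}\le\alpha\le\frac{\pi}{2}$ to make $r$ strictly increasing, so the $k$-th crossing is the unique point on the unit sphere. For part (2) the paper also runs an intermediate value argument over a one-parameter family of Type II profile curves, so your skeleton is the intended one.

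In part (2), however, the decisive step is missing, and the device you offer in its place does not work. Everything hinges on showing that the continuous quantity $r_+/r_-$ genuinely attains values on both sides of $1$; you flag this as ``the main obstacle'' but never establish it, and the symmetry you invoke cannot establish it. The reflection $(\theta,\alpha)\mapsto(\frac{\pi}{2}-\theta,-\alpha)$ fixes the line $\alpha=0$ and sends $\alpha=-\pi$ to $\alpha=\pi$, so it does not interchange the two radial crossings at all; the symmetry that does interchange them, $(\theta,\alpha,t)\mapsto(\frac{\pi}{2}-\theta,\,-\pi-\alpha,\,-t)$, simultaneously swaps $m_1\leftrightarrow m_2$, hence relates the family for $(g,m_1,m_2)$ to the family for the \emph{different} triple $(g,m_2,m_1)$ and says nothing about a single family sweeping $r_+/r_-$ through $1$. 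The paper instead proves the imbalance directly at both ends of an explicit family: it takes $\Gamma_\epsilon$ with $r(0)=\frac{1}{2}$, $\theta(0)=\theta^*$, $\alpha(0)=-\epsilon$. As $\epsilon\to 0$, continuous dependence on initial conditions gives $\Gamma_\epsilon\to\Gamma^*$, so along a sequence the minimal radius $r(t_0)\to 0$; then the arc-length estimate $r(s_-)-r(s_0)\le \Delta\alpha/\min_{s_-\le s\le s_0}\frac{d\alpha}{ds}$, combined with $\frac{d\alpha}{ds}=\frac{1}{r}\left(-(n-1)\sin\alpha+H(\theta)\cos\alpha\right)$ --- whose bracket is shown to be uniformly positive on the arc between $\alpha=-\frac{\pi}{2}$ and $\alpha=-\pi$ (near $\alpha=-\frac{\pi}{2}$ the term $-(n-1)\sin\alpha$ dominates, elsewhere $H(\theta)\cos\alpha$ does), while $\frac{1}{r}$ blows up --- forces $r(t_-)\to 0$ although $r(t_+)>\frac{1}{2}$. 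Note that in this limit it is the radius at the $\alpha=-\pi$ crossing that collapses, so even the direction of your heuristic (``the crossing at $\alpha=0$ is pushed in'') is unverified and appears to be backwards. The same estimate repeated at $\epsilon\to\pi$ reverses the inequality, and continuity in $\epsilon$ finishes the proof; without this (or equivalent) quantitative control, your construction of $\Omega_{g,m_1,m_2}$ is incomplete.
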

\begin{proof}
	Case 1: Note that the condition $\gamma(t)=\gamma'(t)$ is equivalent to $\alpha(t)=k\pi$, $k\in\mathbb{Z}$. Consider the seperatrix $\gamma_1$. For $n<4g$, $O_-$ is focal so we know $\gamma_1$ crosses the line $\alpha=0$ infinitely many times.  Number the intersection times $t_k$. Consider the profile curve $\sigma^k(t)=(r(t),\theta(t),\alpha(t))$ in $D_g$ corresponding to $\gamma_1$ with $r(t_k)=1$. Since $\gamma_1$ stays within $-\frac{\pi}{2}\leq\alpha\leq\frac{\pi}{2}$, $r(t)$ is strictly increasing, so this is the only point with $r(t)=1$. Thus this is a free boundary profile curve. We can do this for all $k$, $(g,m_1,m_2)$ with $n<4g$, thus we can construct $\Sigma_{g,m_1,m_2}^k$.\\
	
	Case 2: Consider the family of profile curves $\Gamma_\epsilon= (r(t),\theta(t),\alpha(t))$ which satisfy
	\[r(0)=\frac{1}{2}\quad \theta(0)=\theta^*\quad \alpha(0)=-\epsilon.\]
	Consider the points $t_+,t_0$ and $t_-$, such that $\alpha(t_+)=0$, $\alpha(t_0)=-\frac{\pi}{2}$ and $\alpha(t_-)=-\pi$. These times exist since this these curves cannot be separatrices. We want to construct a curve $\Gamma$ such that $r(t_-)=r(t_+)$. We do this by demonstrating that for some $\epsilon$ between $0$ and $\pi$ this holds. \\
	
	For $\epsilon<\frac{\pi}{2}$,  $r(t)$ is increasing for $t\in (0,t_+)$, thus $r(t_+)>\frac{1}{2}$ as $\epsilon\to0$. By smooth dependence of ODE solutions on initial conditions, we have $\Gamma_\epsilon\to \Gamma^*$ as $\epsilon\to 0$. Thus, the minima of $\Gamma_\epsilon$ approaches 0, thus there exists a sequence $\epsilon_k\to0$ such that $r(t_0(\epsilon_k))\to0$. If we consider the Euclidean arc-length parametrization $s$ of $\Gamma$ we have
	\[\Delta r = r(s_-)-r(s_0)\leq \Delta s \leq \frac{\Delta \alpha}{\min_{s_-\leq s\leq s_0}\frac{d\alpha}{ds}}.\]
	We know
	\begin{equation}
	\label{summands}
		\frac{d\alpha}{ds}=\frac{1}{r}\left( -(n-1)\sin\alpha+H(\theta)\cos\alpha\right).
	\end{equation}
	There is a curve entering $O_-$ from the right which does not intersect the $\theta = \theta^*$ axis, thus $\theta(t)$ is bounded away from $\frac{\pi}{2}$ for all $\epsilon$. Thus 
	$\frac{d\alpha}{ds}$ is bounded.\\
	 
	As $\epsilon\to 0$, $\theta(t_0),\theta(t_-)\to \frac{\pi}{2}$ and for $\theta> \theta^*$ the two summands in \ref{summands} have the same sign, so if we show one of them is always bounded below by a constant, the expression in the brackets will be bounded away from $0$. Near the line $\alpha=-\frac{\pi}{2}$, we have $-(n-1)\sin\alpha >0 $ uniformly and elsewhere $H(\theta)\cos\alpha>0$ uniformly. Since $r>r(t_0)$, we know $\min_{s_-\leq s\leq s_0}\frac{d\alpha}{ds}\to \infty$ as $\epsilon\to 0$. This implies $\Delta r\to 0$ and further $r(t_-(\epsilon))\to 0$ so $r(t_-)<r(t_+)$ for some $\epsilon$. Repeating the same argument with $\epsilon\to \pi$, we get $r(t_-)>r(t_+)$ for some $\epsilon$. By continuity, there must exist an $\epsilon$ such that $r(t_-(\epsilon))=r(t_+(\epsilon))$. The curve $\Sigma=\Gamma_\epsilon$ is then a free boundary curve. We can do this for all $(g,m_1,m_2)$ with $n\geq4g$, thus we can construct $\Omega_{g,m_1,m_2}$.
\end{proof}

\nocite{*}


\end{document}